\documentclass[11pt,oneside,english,reqno]{amsart}
\usepackage[T1]{fontenc}
\usepackage[latin9]{inputenc}
\usepackage{geometry}
\geometry{verbose,lmargin=4cm}
\usepackage{babel}
\usepackage{float}
\usepackage{mathrsfs}
\usepackage{amsthm}
\usepackage{amstext}
\usepackage{amssymb}
\usepackage{graphicx}
\usepackage{setspace}
\usepackage{esint}
\PassOptionsToPackage{normalem}{ulem}
\usepackage{ulem}
\onehalfspacing
\usepackage[unicode=true,pdfusetitle,
 bookmarks=true,bookmarksnumbered=false,bookmarksopen=false,
 breaklinks=false,pdfborder={0 0 1},backref=false,colorlinks=false]
 {hyperref}
\hypersetup{
 colorlinks=true,citecolor=blue,linkcolor=blue,linktocpage=true}

\makeatletter

\providecommand{\tabularnewline}{\\}

\numberwithin{equation}{section}
\numberwithin{figure}{section}
\numberwithin{table}{section}
\usepackage{enumitem}		
\theoremstyle{plain}
\newtheorem{thm}{\protect\theoremname}[section]
  \theoremstyle{definition}
  \newtheorem{defn}[thm]{\protect\definitionname}
  \theoremstyle{remark}
  \newtheorem{rem}[thm]{\protect\remarkname}
  \theoremstyle{plain}
  \newtheorem{lem}[thm]{\protect\lemmaname}
  \theoremstyle{plain}
  \newtheorem{cor}[thm]{\protect\corollaryname}
  \theoremstyle{remark}
  \newtheorem*{claim*}{\protect\claimname}
  \theoremstyle{definition}
  \newtheorem{example}[thm]{\protect\examplename}
  \theoremstyle{plain}
  \newtheorem{prop}[thm]{\protect\propositionname}
  \theoremstyle{plain}
  \newtheorem{question}[thm]{\protect\questionname}
  \theoremstyle{plain}
  \newtheorem*{fact*}{\protect\factname}
  \theoremstyle{remark}
  \newtheorem*{acknowledgement*}{\protect\acknowledgementname}

\allowdisplaybreaks

\providecommand{\MR}[1]{}



\renewcommand{\section}{%
\@startsection{section}{1}%
  \z@{.7\linespacing\@plus\linespacing}{.5\linespacing}%
  {\normalfont\scshape\centering\bfseries}}

\renewcommand{\subsection}{%
\@startsection{subsection}{2}%
  \z@{.5\linespacing\@plus.7\linespacing}{.5\linespacing}%
  {\normalfont\bfseries}}

\renewcommand{\subsubsection}{%
\@startsection{subsubsection}{2}%
  \z@{.5\linespacing\@plus.7\linespacing}{.5\linespacing}%
  {\normalfont\bfseries}}




\AtBeginDocument{
  
}

\makeatother

  \providecommand{\acknowledgementname}{Acknowledgement}
  \providecommand{\claimname}{Claim}
  \providecommand{\corollaryname}{Corollary}
  \providecommand{\definitionname}{Definition}
  \providecommand{\examplename}{Example}
  \providecommand{\factname}{Fact}
  \providecommand{\lemmaname}{Lemma}
  \providecommand{\propositionname}{Proposition}
  \providecommand{\questionname}{Question}
  \providecommand{\remarkname}{Remark}
\providecommand{\theoremname}{Theorem}

\begin{document}

\title{Discrete reproducing kernel Hilbert spaces: Sampling and distribution
of Dirac-masses}

\author{Palle Jorgensen and Feng Tian}

\address{(Palle E.T. Jorgensen) Department of Mathematics, The University
of Iowa, Iowa City, IA 52242-1419, U.S.A. }

\email{palle-jorgensen@uiowa.edu}

\urladdr{http://www.math.uiowa.edu/\textasciitilde{}jorgen/}

\address{(Feng Tian) Department of Mathematics, Wright State University, Dayton,
OH 45435, U.S.A.}

\email{feng.tian@wright.edu}

\urladdr{http://www.wright.edu/\textasciitilde{}feng.tian/}

\subjclass[2000]{Primary 47L60, 46N30, 46N50, 42C15, 65R10, 05C50, 05C75, 31C20; Secondary
46N20, 22E70, 31A15, 58J65, 81S25}

\keywords{Unbounded operators, harmonic analysis, Hilbert space, reproducing
kernel Hilbert space, discrete analysis, infinite matrices, binomial
coefficients, Gaussian free fields, graph Laplacians, distribution
of point-masses, Green\textquoteright s function (graph Laplacians),
orthogonal systems, bi-orthogonal systems, transforms, (discrete)
Ito-isometries, optimization, determinants.}

\maketitle
\pagestyle{myheadings}
\markright{}
\begin{abstract}
We study reproducing kernels, and associated reproducing kernel Hilbert
spaces (RKHSs) $\mathscr{H}$ over infinite, discrete and countable
sets $V$. In this setting we analyze in detail the distributions
of the corresponding Dirac point-masses of $V$. Illustrations include
certain models from neural networks: An Extreme Learning Machine (ELM)
is a neural network-configuration in which a hidden layer of weights
are randomly sampled, and where the object is then to compute resulting
output. For RKHSs $\mathscr{H}$ of functions defined on a prescribed
countable infinite discrete set $V$, we characterize those which
contain the Dirac masses $\delta_{x}$ for all points $x$ in $V$.
Further examples and applications where this question plays an important
role are: (i) discrete Brownian motion-Hilbert spaces, i.e., discrete
versions of the Cameron-Martin Hilbert space; (ii) energy-Hilbert
spaces corresponding to graph-Laplacians where the set $V$ of vertices
is then equipped with a resistance metric; and finally (iii) the study
of Gaussian free fields.
\end{abstract}

\tableofcontents{}

\section{Introduction}

A reproducing kernel Hilbert space (RKHS) is a Hilbert space $\mathscr{H}$
of functions on a prescribed set, say $V$, with the property that
point-evaluation for functions $f\in\mathscr{H}$ is continuous with
respect to the $\mathscr{H}$-norm. They are called kernel spaces,
because, for every $x\in V$, the point-evaluation for functions $f\in\mathscr{H}$,
$f\left(x\right)$ must then be given as a $\mathscr{H}$-inner product
of $f$ and a vector $k_{x}$, in $\mathscr{H}$; called the kernel.

The RKHSs have been studied extensively since the pioneering papers
by Aronszajn in the 1940ties, see e.g., \cite{Aro43,Aro48}. They
further play an important role in the theory of partial differential
operators (PDO); for example as Green's functions of second order
elliptic PDOs; see e.g., \cite{Nel57,HKL14}. Other applications include
engineering, physics, machine-learning theory (see \cite{KH11,SZ09,CS02}),
stochastic processes (e.g., Gaussian free fields), numerical analysis,
and more. See, e.g., \cite{AD93,ABDdS93,AD92,AJSV13,AJV14}. Also,
see \cite{MR2089140,MR2607639,MR2913695,MR2975345,MR3091062,MR3101840,MR3201917}.
But the literature so far has focused on the theory of kernel functions
defined on continuous domains, either domains in Euclidean space,
or complex domains in one or more variables. For these cases, the
Dirac $\delta_{x}$ distributions do not have finite $\mathscr{H}$-norm.
But for RKHSs over discrete point distributions, it is reasonable
to expect that the Dirac $\delta_{x}$ functions will in fact have
finite $\mathscr{H}$-norm.

An illustration from neural networks: An Extreme Learning Machine
(ELM) is a neural network configuration in which a hidden layer of
weights are randomly sampled (see e.g., \cite{RW06}), and the object
is then to determine analytically resulting output layer weights.
Hence ELM may be thought of as an approximation to a network with
infinite number of hidden units.

Here we consider the discrete case, i.e., RKHSs of functions defined
on a prescribed countable infinite discrete set $V$. We are concerned
with a characterization of those RKHSs $\mathscr{H}$ which contain
the Dirac masses $\delta_{x}$ for all points $x\in V$. Of the examples
and applications where this question plays an important role, we emphasize
three: (i) discrete Brownian motion-Hilbert spaces, i.e., discrete
versions of the Cameron-Martin Hilbert space; (ii) energy-Hilbert
spaces corresponding to graph-Laplacians; and finally (iii) RKHSs
generated by binomial coefficients. We show that the point-masses
have finite $\mathscr{H}$-norm in cases (i) and (ii), but not in
case (iii).

Our setting is a given positive definite function $k$ on $V\times V$,
where $V$ is discrete (see above). We study the corresponding RKHS
$\mathscr{H}\left(=\mathscr{H}\left(k\right)\right)$ in detail. Our
main results are Theorems \ref{thm:del}, \ref{thm:bino}, and \ref{thm:mc}
which give explicit answers to the question of which point-masses
from $V$ are in $\mathscr{H}$. Applications include Corollaries
\ref{cor:proj}, \ref{cor:bino}, \ref{cor:lap1}, \ref{cor:lap2},
\ref{cor:lap3}, and \ref{cor:Lap3}.

The paper is organized as follows: Section \ref{sec:drkhs} leads
up to our characterization (Theorem \ref{thm:del}) of point-masses
which have finite $\mathscr{H}$-norm. It is applied in sections \ref{sec:egs}
and \ref{sec:net} to a variety of classes of discrete RKHSs. Section
\ref{sec:egs} deals with samples from Brownian motion, and from the
Brownian bridge process, and binomial kernels, and with kernels on
sets $V\times V$ which arise as restrictions to sample-points. Section
\ref{sec:net} covers the case of infinite network of resistors. By
this we mean an infinite graph with assigned resistors on its edges.
In this family of examples, the associated RKHSs vary with the assignment
of resistors on the edges in $G$, and are computed explicitly from
a resulting energy form. Our result Corollary \ref{cor:lap1} states
that, for the network models, all point-masses have finite energy.
Furthermore, we compute the value, and we study $V$ as a metric space
w.r.t. the corresponding resistance metric. These results, in turn,
have direct implications (Corollaries \ref{cor:lap2}, \ref{cor:lap3}
and \ref{cor:lap}) for the family of Gaussian free fields associated
with our infinite network models.

A positive definite kernel $k$ is said to be \emph{universal} \cite{CMPY08}
if, every continuous function, on a compact subset of the input space,
can be uniformly approximated by sections of the kernel, i.e., by
continuous functions in the RKHS. In Theorem \ref{thm:mc} we show
that for the RKHSs from kernels $k_{c}$ in electrical network $G$
of resistors, this universality holds. The metric in this case is
the resistance metric on the vertices of $G$, determined by the assignment
of a conductance function $c$ on the edges in $G$.

\section{\label{sec:drkhs}Discrete RKHSs}
\begin{defn}
Let $V$ be a countable and infinite set, and $\mathscr{F}\left(V\right)$
the set of all \emph{finite} subsets of $V$. A function $k:V\times V\rightarrow\mathbb{C}$
is said to be \emph{positive definite}, if 
\begin{equation}
\underset{\left(x,y\right)\in F\times F}{\sum\sum}k\left(x,y\right)\overline{c_{x}}c_{y}\geq0\label{eq:pd1}
\end{equation}
holds for all coefficients $\{c_{x}\}_{x\in F}\subset\mathbb{C}$,
and all $F\in\mathscr{F}\left(V\right)$. 
\end{defn}

\begin{defn}
\label{def:d1}Fix a set $V$, countable infinite. 
\begin{enumerate}
\item For all $x\in V$, set 
\begin{equation}
k_{x}:=k\left(\cdot,x\right):V\rightarrow\mathbb{C}\label{eq:pd2}
\end{equation}
as a function on $V$. 
\item Let $\mathscr{H}:=\mathscr{H}\left(k\right)$ be the Hilbert-completion
of the $span\left\{ k_{x}:x\in V\right\} $, with respect to the inner
product 
\begin{equation}
\left\langle \sum c_{x}k_{x},\sum d_{y}k_{y}\right\rangle _{\mathscr{H}}:=\sum\sum\overline{c_{x}}d_{y}k\left(x,y\right)\label{eq:pd3}
\end{equation}
modulo the subspace of functions of zero $\mathscr{H}$-norm. $\mathscr{H}$
is then a reproducing kernel Hilbert space (HKRS), with the reproducing
property:
\begin{equation}
\left\langle k_{x},\varphi\right\rangle _{\mathscr{H}}=\varphi\left(x\right),\;\forall x\in V,\:\forall\varphi\in\mathscr{H}.\label{eq:pd31}
\end{equation}
\textbf{Note.} The summations in (\ref{eq:pd3}) are all finite. Starting
with finitely supported summations in (\ref{eq:pd3}), the RKHS $\mathscr{H}=\mathscr{H}\left(k\right)$
is then obtained by Hilbert space completion. We use physicists' convention,
so that the inner product is conjugate linear in the first variable,
and linear in the second variable.
\item If $F\in\mathscr{F}\left(V\right)$, set $\mathscr{H}_{F}=\text{closed\:\ span}\{k_{x}\}_{x\in F}\subset\mathscr{H}$,
(closed is automatic if $F$ is finite.) And set 
\begin{equation}
P_{F}:=\text{the orthogonal projection onto \ensuremath{\mathscr{H}_{F}}}.\label{eq:pd4}
\end{equation}

\item For $F\in\mathscr{F}\left(V\right)$, set 
\begin{equation}
K_{F}:=\left(k\left(x,y\right)\right)_{\left(x,y\right)\in F\times F}\label{eq:pd5}
\end{equation}
as a $\#F\times\#F$ matrix. 
\end{enumerate}
\end{defn}
\begin{rem}
It follows from the above that reproducing kernel Hilbert spaces (RKHS)
arise from a given positive definite kernel $k$, a corresponding
pre-Hilbert form; and then a Hilbert-completion. The question arises:
\textquotedblleft What are the functions in the completion?\textquotedblright{}
Now, before completion, the functions are as specified in Definition
\ref{def:d1}, but the Hilbert space completions are subtle; they
are classical Hilbert spaces of functions, not always transparent
from the naked kernel $k$ itself. Examples of classical RKHSs: Hardy
spaces or Bergman spaces (for complex domains), Sobolev spaces and
Dirichlet spaces (for real domains, or for fractals \cite{MR3054607,MR2892621,MR2764237}),
band-limited $L^{2}$ functions (from signal analysis), and Cameron-Martin
Hilbert spaces from Gaussian processes (in continuous time domain).

Our focus here is on discrete analogues of the classical RKHSs from
real or complex analysis. These discrete RKHSs in turn are dictated
by applications, and their features are quite different from those
of their continuous counter parts.\end{rem}
\begin{defn}
\label{def:dmp}The RKHS $\mathscr{H}=\mathscr{H}\left(k\right)$
is said to have the \emph{discrete mass} property ($\mathscr{H}$
is called a \emph{discrete RKHS}), if $\delta_{x}\in\mathscr{H}$,
for all $x\in V$. Here, $\delta_{x}\left(y\right)=\begin{cases}
1 & \text{if }x=y\\
0 & \text{if \ensuremath{x\neq y}}
\end{cases}$, i.e., the Dirac mass at $x\in V$. \end{defn}
\begin{lem}
\label{lem:proj1}Let $F\in\mathscr{F}\left(V\right)$, $x_{1}\in F$.
Assume $\delta_{x_{1}}\in\mathscr{H}$. Then 
\begin{equation}
P_{F}\left(\delta_{x_{1}}\right)\left(\cdot\right)=\sum_{y\in F}\left(K_{F}^{-1}\delta_{x_{1}}\right)\left(y\right)k_{y}\left(\cdot\right).\label{eq:pd6}
\end{equation}
\end{lem}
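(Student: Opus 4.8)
The plan is to verify the formula by characterizing $P_F(\delta_{x_1})$ as the unique element of $\mathscr{H}_F$ whose $\mathscr{H}$-inner products against the spanning vectors $\{k_y\}_{y\in F}$ agree with those of $\delta_{x_1}$. Since $\mathscr{H}_F = \mathrm{span}\{k_y : y\in F\}$ and the orthogonal projection $P_F$ is self-adjoint and fixes each $k_y$, for any $\varphi\in\mathscr{H}$ and $y\in F$ we have $\langle k_y, P_F\varphi\rangle_{\mathscr{H}} = \langle P_F k_y, \varphi\rangle_{\mathscr{H}} = \langle k_y, \varphi\rangle_{\mathscr{H}}$. Applying this with $\varphi=\delta_{x_1}$ (which lies in $\mathscr{H}$ by hypothesis) and using the reproducing property \eqref{eq:pd31}, we get $\langle k_y, P_F\delta_{x_1}\rangle_{\mathscr{H}} = \langle k_y,\delta_{x_1}\rangle_{\mathscr{H}} = \delta_{x_1}(y)$ for every $y\in F$. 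So the task reduces to showing that the right-hand side of \eqref{eq:pd6} satisfies the same system of equations, together with lying in $\mathscr{H}_F$; since the Gram matrix of $\{k_y\}_{y\in F}$ relative to the inner product is exactly $K_F$, an element of $\mathscr{H}_F$ is determined by that list of inner products precisely when $K_F$ is invertible.

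Concretely, I would write the candidate as $\psi := \sum_{y\in F}(K_F^{-1}\delta_{x_1})(y)\,k_y$, which manifestly belongs to $\mathscr{H}_F$. For $z\in F$, compute using \eqref{eq:pd3}:
\begin{equation}
\langle k_z,\psi\rangle_{\mathscr{H}} = \sum_{y\in F}(K_F^{-1}\delta_{x_1})(y)\,\langle k_z,k_y\rangle_{\mathscr{H}} = \sum_{y\in F} k(z,y)\,(K_F^{-1}\delta_{x_1})(y) = \bigl(K_F K_F^{-1}\delta_{x_1}\bigr)(z) = \delta_{x_1}(z).
\end{equation}
Thus $\langle k_z, P_F\delta_{x_1} - \psi\rangle_{\mathscr{H}} = 0$ for all $z\in F$; since both $P_F\delta_{x_1}$ and $\psi$ lie in $\mathscr{H}_F = \mathrm{span}\{k_z : z\in F\}$, their difference is orthogonal to a spanning set of the subspace containing it, hence is zero. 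This gives $P_F\delta_{x_1} = \psi$, which is \eqref{eq:pd6}.

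The one genuine subtlety, and the step I expect to need the most care, is the invertibility (or at least the well-posedness) of $K_F^{-1}$ appearing in the statement. If $K_F$ is singular, then $\{k_y\}_{y\in F}$ is linearly dependent in $\mathscr{H}$, the quotient by zero-norm vectors in Definition \ref{def:d1} has collapsed some combinations, and $K_F^{-1}\delta_{x_1}$ must be interpreted via a generalized inverse (or one restricts to a maximal linearly independent subset). I would either add the standing assumption that $K_F$ is invertible for the relevant $F$, or note that the argument above goes through verbatim with $K_F^{-1}$ replaced by the Moore–Penrose inverse provided $\delta_{x_1}$, viewed through the reproducing property as the functional $k_y\mapsto\delta_{x_1}(y)$, is consistent with the range of $K_F$ — and consistency is guaranteed precisely by the hypothesis $\delta_{x_1}\in\mathscr{H}$, since then $P_F\delta_{x_1}$ exists in $\mathscr{H}_F$ and its coefficient vector solves $K_F v = (\delta_{x_1}(y))_{y\in F}$. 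Everything else is the routine bilinear computation above.
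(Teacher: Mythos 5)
Your proposal is correct and is essentially the paper's own argument: the paper's proof consists precisely of checking that $\delta_{x_{1}}-\sum_{y\in F}\left(K_{F}^{-1}\delta_{x_{1}}\right)\left(y\right)k_{y}$ is orthogonal to each $k_{z}$, $z\in F$, which is the same computation $\left\langle k_{z},\psi\right\rangle _{\mathscr{H}}=\left(K_{F}K_{F}^{-1}\delta_{x_{1}}\right)\left(z\right)=\delta_{x_{1}}\left(z\right)$ that you carry out. Your closing remark on the invertibility of $K_{F}$ is a legitimate point that the paper leaves implicit (it is addressed only later via the ``strictly positive'' hypothesis $\det K_{F}>0$), and your observation that the system $K_{F}v=\delta_{x_{1}}\big|_{F}$ is solvable in any case is consistent with the argument the paper gives in the proof of Theorem \ref{thm:del}, step \ref{enu:d2}$\Rightarrow$\ref{enu:d3}.
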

\begin{proof}
Show that
\begin{equation}
\delta_{x}-\sum_{y\in F}\left(K_{F}^{-1}\delta_{x_{1}}\right)\left(y\right)k_{y}\left(\cdot\right)\in\mathscr{H}_{F}^{\perp}.\label{eq:pd7}
\end{equation}
The remaining part follows easily from this. 

(The notation $\left(\mathscr{H}_{F}\right)^{\perp}$ stands for orthogonal
complement, also denoted $\mathscr{H}\ominus\mathscr{H}_{F}=\left\{ \varphi\in\mathscr{H}\:\big|\:\left\langle f,\varphi\right\rangle _{\mathscr{H}}=0,\;\forall f\in\mathscr{H}_{F}\right\} $.)\end{proof}
\begin{lem}
\label{lem:proj}Using Dirac's bra-ket, and ket-bra notation (for
rank-one operators), the orthogonal projection onto $\mathscr{H}_{F}$
is 
\begin{equation}
P_{F}=\sum_{y\in F}\left|k_{y}\left\rangle \right\langle k_{y}^{*}\right|;\label{eq:pd71}
\end{equation}
where 
\begin{equation}
k_{x}^{*}:=\sum_{y\in F}\left(K_{F}^{-1}\right)_{yx}k_{y}\label{eq:pd72}
\end{equation}
is the dual vector to $k_{x}$, for all $x\in V$. \end{lem}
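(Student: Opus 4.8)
The plan is to show that the finite-rank operator $Q:=\sum_{y\in F}\left|k_{y}\left\rangle \right\langle k_{y}^{*}\right|$ coincides with the orthogonal projection $P_{F}$. Unwinding the bra--ket notation with the physicists' convention fixed after (\ref{eq:pd3}), $Q$ acts by $Q\varphi=\sum_{y\in F}\left\langle k_{y}^{*},\varphi\right\rangle _{\mathscr{H}}k_{y}$. Since $F$ is finite, $\mathscr{H}_{F}=\mathrm{span}\{k_{x}\}_{x\in F}$ is closed and $Q$ is bounded; because $\mathscr{H}=\mathscr{H}_{F}\oplus\mathscr{H}_{F}^{\perp}$, it suffices to prove that $Q$ restricts to the identity map on $\mathscr{H}_{F}$ and annihilates $\mathscr{H}_{F}^{\perp}$. (This presupposes, as is implicit in the statement, that $K_{F}$ is invertible so that $k_{x}^{*}$ in (\ref{eq:pd72}) is well defined; this holds whenever $k$ is strictly positive definite, and in particular in all the examples treated below.)

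The engine of the proof is the bi-orthogonality relation
\[
\left\langle k_{x}^{*},k_{y}\right\rangle _{\mathscr{H}}=\delta_{xy},\qquad x,y\in F.
\]
To obtain it, expand $k_{x}^{*}$ via (\ref{eq:pd72}), pull the scalars out of the conjugate-linear first slot of the inner product, and use $\left\langle k_{z},k_{y}\right\rangle _{\mathscr{H}}=k\left(z,y\right)=\left(K_{F}\right)_{zy}$ together with the Hermiticity of $K_{F}^{-1}$ (so that $\overline{\left(K_{F}^{-1}\right)_{zx}}=\left(K_{F}^{-1}\right)_{xz}$); the sum then collapses to $\left(K_{F}^{-1}K_{F}\right)_{xy}=\delta_{xy}$. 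In particular each $k_{x}^{*}$ lies in $\mathscr{H}_{F}$, and $\{k_{x}^{*}\}_{x\in F}$ is exactly the basis of $\mathscr{H}_{F}$ dual to $\{k_{x}\}_{x\in F}$.

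The two required properties are now immediate. If $\eta\in\mathscr{H}_{F}^{\perp}$, then $\left\langle k_{y}^{*},\eta\right\rangle _{\mathscr{H}}=0$ for every $y\in F$ since $k_{y}^{*}\in\mathscr{H}_{F}$, so $Q\eta=0$. If $\xi=\sum_{y\in F}c_{y}k_{y}\in\mathscr{H}_{F}$, then bi-orthogonality gives $\left\langle k_{x}^{*},\xi\right\rangle _{\mathscr{H}}=c_{x}$ for each $x\in F$, hence $Q\xi=\sum_{x\in F}c_{x}k_{x}=\xi$. Therefore $Q=P_{F}$. (Alternatively, one may first note $Q=Q^{*}$ from the ket--bra identity $\left(\left|u\left\rangle \right\langle v\right|\right)^{*}=\left|v\left\rangle \right\langle u\right|$ and the symmetry of the defining sum, and then that $Q$ is idempotent with range $\mathscr{H}_{F}$; specializing $\varphi=\delta_{x_{1}}$ recovers Lemma \ref{lem:proj1}, since $\left\langle k_{y}^{*},\delta_{x_{1}}\right\rangle _{\mathscr{H}}=\left(K_{F}^{-1}\delta_{x_{1}}\right)\left(y\right)$.) There is no substantive obstacle here; the only point requiring care is the bookkeeping of complex conjugates — aligning the ket--bra action with the convention that the inner product is conjugate-linear in the first variable, and invoking Hermiticity of $K_{F}^{-1}$ at the right step.
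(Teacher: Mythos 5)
Your proof is correct and follows essentially the same route as the paper: both hinge on the bi-orthogonality relation $\left\langle k_{x}^{*},k_{z}\right\rangle _{\mathscr{H}}=\delta_{x,z}$ obtained from $K_{F}^{-1}K_{F}=I$ and the Hermiticity of $K_{F}^{-1}$. If anything, yours is slightly more complete, since you explicitly verify that the operator acts as the identity on $\mathscr{H}_{F}$ and annihilates $\mathscr{H}_{F}^{\perp}$ (and flag the needed invertibility of $K_{F}$), where the paper simply asserts the final identification with $P_{F}$.
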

\begin{proof}
Let $k_{x}^{*}$ be specified as in (\ref{eq:pd72}), then 
\begin{eqnarray*}
\left\langle k_{x}^{*},k_{z}\right\rangle _{\mathscr{H}} & = & \sum_{y\in F}\left\langle \left(K_{F}^{-1}\right)_{yx}k_{y},k_{z}\right\rangle _{\mathscr{H}}\\
 & = & \sum_{y\in F}\left(K_{F}^{-1}\right)_{xy}\left\langle k_{y},k_{z}\right\rangle _{\mathscr{H}}\\
 & = & \sum_{y\in F}\left(K_{F}^{-1}\right)_{xy}\left(K_{F}\right)_{yz}=\delta_{x,z},
\end{eqnarray*}
i.e., $k_{x}^{*}$ is the dual vector to $k_{x}$, for all $x\in V$. 

For $f\in\mathscr{H}$, and $F\in\mathscr{F}\left(V\right)$, we have
\begin{eqnarray*}
\sum_{y\in F}\left|k_{y}\left\rangle \right\langle k_{y}^{*}\right|f & = & \sum_{y\in F}\left\langle k_{y}^{*},f\right\rangle _{\mathscr{H}}k_{y}\\
 & = & \underset{\left(y,z\right)\in F\times F}{\sum\sum}\left(K_{F}^{-1}\right)_{z,y}\left\langle k_{z},f\right\rangle _{\mathscr{H}}\\
 & = & P_{F}f.
\end{eqnarray*}
This yields the orthogonal projection realized as stated in (\ref{eq:pd71}). 

Now, applying (\ref{eq:pd71}) to $\delta_{x_{1}}$, we get 
\begin{eqnarray*}
P_{F}\left(\delta_{x_{1}}\right) & = & \sum_{y\in F}\left\langle k_{y}^{*},\delta_{x_{1}}\right\rangle _{\mathscr{H}}k_{y}\\
 & = & \sum_{y\in F}\left(\sum_{z\in F}\left(K_{F}^{-1}\right)_{yz}\left\langle k_{z},\delta_{x_{1}}\right\rangle _{\mathscr{H}}\right)k_{y}\\
 & = & \sum_{y\in F}\left(\sum_{z\in F}\left(K_{F}^{-1}\right)_{yz}\delta_{x_{1}}\left(z\right)\right)k_{y}\\
 & = & \sum_{y\in F}\left(K_{F}^{-1}\delta_{x_{1}}\right)\left(y\right)k_{y},
\end{eqnarray*}
where 
\[
\left(K_{F}^{-1}\delta_{x_{1}}\right)\left(y\right):=\sum_{z\in F}\left(K_{F}^{-1}\right)_{yz}\delta_{x_{1}}\left(z\right).
\]
This verifies (\ref{eq:pd6}). 
\end{proof}

\begin{rem}
Note a slight abuse of notations: We make formally sense of the expressions
for $P_{F}(\delta_{x})$ in (\ref{eq:pd6}) even in the case when
$\delta_{x}$ might not be in $\mathscr{H}$. For all finite $F$,
we showed that $P_{F}(\delta_{x})\in\mathscr{H}$. But for $\delta_{x}$
be in $\mathscr{H}$, we must have the additional boundedness assumption
(\ref{eq:d3}) satisfied; see Theorem \ref{thm:del}.\end{rem}
\begin{lem}
\label{lem:proj2}Let $F\in\mathscr{F}\left(V\right)$, $x_{1}\in F$,
then 
\begin{equation}
\left(K_{F}^{-1}\delta_{x_{1}}\right)\left(x_{1}\right)=\left\Vert P_{F}\delta_{x_{1}}\right\Vert _{\mathscr{H}}^{2}.\label{eq:pd8}
\end{equation}
\end{lem}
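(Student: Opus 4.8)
The plan is to start from the explicit formula for $P_F\delta_{x_1}$ established in Lemma \ref{lem:proj1} (equation (\ref{eq:pd6})), namely
\[
P_F\delta_{x_1}=\sum_{y\in F}\bigl(K_F^{-1}\delta_{x_1}\bigr)(y)\,k_y ,
\]
which is a genuine, finitely supported element of $\mathscr{H}_F\subset\mathscr{H}$, and then compute its $\mathscr{H}$-norm directly from the defining inner product (\ref{eq:pd3}). Writing $v:=K_F^{-1}\delta_{x_1}\in\mathbb{C}^{F}$, this gives
\[
\bigl\|P_F\delta_{x_1}\bigr\|_{\mathscr{H}}^{2}=\sum_{y\in F}\sum_{z\in F}\overline{v(y)}\,v(z)\,k(y,z)=\bigl\langle v,\,K_F v\bigr\rangle_{\ell^{2}(F)} .
\]
The key simplification is that $K_F v=K_F K_F^{-1}\delta_{x_1}=\delta_{x_1}$, so the quadratic form collapses to $\langle v,\delta_{x_1}\rangle_{\ell^{2}(F)}=\overline{v(x_1)}=\overline{\bigl(K_F^{-1}\delta_{x_1}\bigr)(x_1)}$. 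Finally, since $K_F$ is Hermitian and positive definite (by (\ref{eq:pd1})), so is $K_F^{-1}$, hence its diagonal entry $\bigl(K_F^{-1}\delta_{x_1}\bigr)(x_1)=(K_F^{-1})_{x_1x_1}$ is a nonnegative real number; equivalently, the left-hand side, being a squared norm, equals its own complex conjugate. This yields (\ref{eq:pd8}).

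An essentially equivalent route, more in the spirit of Lemma \ref{lem:proj}, is to observe that the coefficients $(K_F^{-1}\delta_{x_1})(y)=(K_F^{-1})_{yx_1}$ are exactly those appearing in the dual vector $k_{x_1}^{*}$ of (\ref{eq:pd72}), so that $P_F\delta_{x_1}=k_{x_1}^{*}$; then $\|k_{x_1}^{*}\|_{\mathscr{H}}^{2}=\sum_{y\in F}(K_F^{-1})_{yx_1}\langle k_{x_1}^{*},k_y\rangle_{\mathscr{H}}=\sum_{y\in F}(K_F^{-1})_{yx_1}\,\delta_{x_1,y}=(K_F^{-1})_{x_1x_1}$, using $\langle k_{x_1}^{*},k_y\rangle_{\mathscr{H}}=\delta_{x_1,y}$ as already computed in the proof of Lemma \ref{lem:proj}. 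Alternatively one may phrase it through the reproducing property: for $y\in F$ one has $\langle k_y,P_F\delta_{x_1}\rangle_{\mathscr{H}}=(P_F\delta_{x_1})(y)$, and a one-line computation from (\ref{eq:pd6}) shows $(P_F\delta_{x_1})(z)=(K_Fv)(z)=\delta_{x_1}(z)$ for every $z\in F$ (the projection agrees with $\delta_{x_1}$ on $F$), whence $\|P_F\delta_{x_1}\|_{\mathscr{H}}^{2}=\sum_{y\in F}\overline{v(y)}\,(P_F\delta_{x_1})(y)=\overline{v(x_1)}$.

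There is no serious obstacle here; the only point requiring care is that $\delta_{x_1}$ itself need not belong to $\mathscr{H}$ — that is precisely the property being quantified — so one must not write $\|P_F\delta_{x_1}\|_{\mathscr{H}}^{2}=\langle\delta_{x_1},P_F\delta_{x_1}\rangle_{\mathscr{H}}$ directly. The argument above circumvents this by working entirely with $P_F\delta_{x_1}$, which lies in $\mathscr{H}_F$ and has the finite expansion (\ref{eq:pd6}) as a bona fide element of $\mathscr{H}$, regardless of whether $\delta_{x_1}\in\mathscr{H}$. A secondary technicality is that $K_F$ must be invertible for the statement to make sense; this is implicitly assumed throughout (as in Lemmas \ref{lem:proj1} and \ref{lem:proj}), and in a degenerate situation one would read $K_F^{-1}$ as the Moore--Penrose inverse on the appropriate quotient, with the same conclusion.
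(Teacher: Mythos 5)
Your proof is correct and follows essentially the same route as the paper: expand $P_F\delta_{x_1}=\sum_{y\in F}\bigl(K_F^{-1}\delta_{x_1}\bigr)(y)\,k_y$ and collapse the resulting quadratic form $\langle v,K_Fv\rangle_{\ell^2(F)}$ via $K_Fv=\delta_{x_1}$ to obtain $\zeta^{(F)}(x_1)=(K_F^{-1}\delta_{x_1})(x_1)$. Your caveat about not pairing against $\delta_{x_1}$ in $\mathscr{H}$ (since $\delta_{x_1}$ may fail to lie there) is well taken and in fact slightly more careful than the paper's own proof, which writes $\|P_F\delta_{x_1}\|_{\mathscr{H}}^2=\langle P_F\delta_{x_1},\delta_{x_1}\rangle_{\mathscr{H}}$ without comment.
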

\begin{proof}
Setting $\zeta^{\left(F\right)}:=K_{F}^{-1}\left(\delta_{x_{1}}\right)$,
we have 
\[
P_{F}\left(\delta_{x_{1}}\right)=\sum_{y\in F}\zeta^{\left(F\right)}\left(y\right)k_{F}\left(\cdot,y\right)
\]
and for all $z\in F$, 
\begin{eqnarray}
\underset{\zeta^{\left(F\right)}\left(x_{1}\right)}{\underbrace{\sum_{z\in F}\zeta^{\left(F\right)}\left(z\right)P_{F}\left(\delta_{x_{1}}\right)\left(z\right)}} & = & \sum_{F}\sum_{F}\zeta^{\left(F\right)}\left(z\right)\zeta^{\left(F\right)}\left(y\right)k_{F}\left(z,y\right)\label{eq:pd81}\\
 & = & \left\Vert P_{F}\delta_{x_{1}}\right\Vert _{\mathscr{H}}^{2}.\nonumber 
\end{eqnarray}
Note the LHS of (\ref{eq:pd81}) is given by (see Lemma \ref{lem:proj})
\begin{eqnarray*}
\left\Vert P_{F}\delta_{x_{1}}\right\Vert _{\mathscr{H}}^{2} & = & \left\langle P_{F}\delta_{x_{1}},\delta_{x_{1}}\right\rangle _{\mathscr{H}}\\
 & = & \sum_{y\in F}\left(K_{F}^{-1}\delta_{x_{1}}\right)\left(y\right)\left\langle k_{y},\delta_{x_{1}}\right\rangle _{\mathscr{H}}\\
 & = & \left(K_{F}^{-1}\delta_{x_{1}}\right)\left(x_{1}\right)=K_{F}^{-1}\left(x_{1},x_{1}\right).
\end{eqnarray*}
\end{proof}
\begin{cor}
\label{cor:proj1}If $\delta_{x_{1}}\in\mathscr{H}$ (see Theorem
\ref{thm:del}), then 
\begin{equation}
\sup_{F\in\mathscr{F}\left(V\right)}\left(K_{F}^{-1}\delta_{x_{1}}\right)\left(x_{1}\right)=\left\Vert \delta_{x_{1}}\right\Vert _{\mathscr{H}}^{2}.\label{eq:pd9}
\end{equation}

\end{cor}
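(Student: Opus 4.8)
The plan is to combine the pointwise identity from Lemma \ref{lem:proj2} with the monotone convergence of orthogonal projections along the net $\mathscr{F}\left(V\right)$ ordered by inclusion. First note that $\mathscr{F}\left(V\right)$ is directed under inclusion, and that $F\subseteq F'$ forces $\mathscr{H}_{F}\subseteq\mathscr{H}_{F'}$, hence $P_{F}\leq P_{F'}$ as selfadjoint projections. Since, by construction of $\mathscr{H}=\mathscr{H}\left(k\right)$, the union $\bigcup_{F\in\mathscr{F}\left(V\right)}\mathscr{H}_{F}$ contains $span\left\{ k_{x}:x\in V\right\} $ and is therefore dense in $\mathscr{H}$, the net $\left(P_{F}\right)_{F\in\mathscr{F}\left(V\right)}$ converges strongly to the identity operator on $\mathscr{H}$. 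Applied to the fixed vector $\delta_{x_{1}}$, which lies in $\mathscr{H}$ by hypothesis, this gives $P_{F}\delta_{x_{1}}\to\delta_{x_{1}}$ in $\mathscr{H}$-norm, and, by monotonicity of the projections, the scalars $\left\Vert P_{F}\delta_{x_{1}}\right\Vert _{\mathscr{H}}$ increase along the net to $\left\Vert \delta_{x_{1}}\right\Vert _{\mathscr{H}}$.

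Next, for every $F\in\mathscr{F}\left(V\right)$ with $x_{1}\in F$, Lemma \ref{lem:proj2} yields $\left(K_{F}^{-1}\delta_{x_{1}}\right)\left(x_{1}\right)=\left\Vert P_{F}\delta_{x_{1}}\right\Vert _{\mathscr{H}}^{2}$. Taking the supremum over all such $F$ and invoking the previous paragraph gives $\sup_{F\ni x_{1}}\left(K_{F}^{-1}\delta_{x_{1}}\right)\left(x_{1}\right)=\left\Vert \delta_{x_{1}}\right\Vert _{\mathscr{H}}^{2}$; in fact this supremum is an increasing limit along the net. It remains to account for the sets $F$ with $x_{1}\notin F$: there the symbol $\left(K_{F}^{-1}\delta_{x_{1}}\right)\left(x_{1}\right)$ is read through the defining formula $\sum_{z\in F}\left(K_{F}^{-1}\right)_{x_{1}z}\delta_{x_{1}}\left(z\right)$, and since $\delta_{x_{1}}$ vanishes identically on $F$ this is $0\leq\left\Vert \delta_{x_{1}}\right\Vert _{\mathscr{H}}^{2}$, so such $F$ do not affect the supremum. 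Combining the two cases establishes (\ref{eq:pd9}).

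There is no real obstacle here; the one point worth being explicit about is the role of the hypothesis $\delta_{x_{1}}\in\mathscr{H}$. Without it, each $\left\Vert P_{F}\delta_{x_{1}}\right\Vert _{\mathscr{H}}$ is still well defined via Lemma \ref{lem:proj1}, but the net is unbounded and the supremum is $+\infty$; it is precisely membership $\delta_{x_{1}}\in\mathscr{H}$ that makes the net norm-bounded and pins the limit to $\left\Vert \delta_{x_{1}}\right\Vert _{\mathscr{H}}$. This is the content foreshadowed by the boundedness condition (\ref{eq:d3}) of Theorem \ref{thm:del}, and the reverse implication (boundedness of the net $\Rightarrow\delta_{x_{1}}\in\mathscr{H}$) will presumably be handled there rather than in this corollary.
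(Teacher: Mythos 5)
Your proof is correct and follows essentially the route the paper intends: the identity $\left(K_{F}^{-1}\delta_{x_{1}}\right)\left(x_{1}\right)=\left\Vert P_{F}\delta_{x_{1}}\right\Vert _{\mathscr{H}}^{2}$ from Lemma \ref{lem:proj2}, combined with the monotone increase of $\left\Vert P_{F}\delta_{x_{1}}\right\Vert _{\mathscr{H}}^{2}$ along the net and the strong convergence $P_{F}\delta_{x_{1}}\rightarrow\delta_{x_{1}}$ (which the paper records as (\ref{eq:mono1})--(\ref{eq:mono2})). Your explicit treatment of the degenerate case $x_{1}\notin F$ is a reasonable reading of the paper's slightly loose notation and does not change the supremum.
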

The following condition is satisfied in some examples, but not all:
\begin{cor}
$\exists F\in\mathscr{F}\left(V\right)$ s.t. $\delta_{x_{1}}\in\mathscr{H}_{F}$
$\Longleftrightarrow$ 
\[
K_{F'}^{-1}\left(\delta_{x_{1}}\right)\left(x_{1}\right)=K_{F}^{-1}\left(\delta_{x_{1}}\right)\left(x_{1}\right)
\]
for all $F'\supset F$. 
\end{cor}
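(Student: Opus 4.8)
The plan is to reduce the whole statement to the behaviour of the single number $\beta_{G}:=\bigl(K_{G}^{-1}\delta_{x_{1}}\bigr)(x_{1})$ and the vector $w_{G}:=\sum_{y\in G}\bigl(K_{G}^{-1}\delta_{x_{1}}\bigr)(y)\,k_{y}\in\mathscr{H}_{G}$, both of which are defined for every finite $G\ni x_{1}$ whether or not $\delta_{x_{1}}\in\mathscr{H}$ (this is the ``formal'' use of (\ref{eq:pd6}) sanctioned by the Remark following Lemma \ref{lem:proj}). Throughout I take $x_{1}\in F$, so that every $F'\supseteq F$ contains $x_{1}$ and $\beta_{F'}$, $w_{F'}$ are meaningful.

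The first step is to record two identities, which are just the computations from the proofs of Lemmas \ref{lem:proj1} and \ref{lem:proj2} run in this formal setting. (a) For every finite $G\ni x_{1}$ and every $z\in G$ one has $\langle k_{z},w_{G}\rangle_{\mathscr{H}}=\bigl(K_{G}K_{G}^{-1}\delta_{x_{1}}\bigr)(z)=\delta_{x_{1}}(z)$; consequently, for $F\subseteq F'$, $w_{F'}-w_{F}\perp\mathscr{H}_{F}$, while $w_{F}\in\mathscr{H}_{F}$, so $w_{F}=P_{F}w_{F'}$. (b) $\|w_{G}\|_{\mathscr{H}}^{2}=\beta_{G}$: expanding $\langle w_{G},w_{G}\rangle_{\mathscr{H}}$ and using (a) collapses the sum to the term $y=x_{1}$. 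Pythagoras in the finite-dimensional space $\mathscr{H}_{F'}$, applied to $w_{F'}=P_{F}w_{F'}+(w_{F'}-P_{F}w_{F'})$, then yields
\begin{equation*}
\beta_{F'}=\beta_{F}+\|w_{F'}-w_{F}\|_{\mathscr{H}}^{2}\qquad(F\subseteq F'),
\end{equation*}
so that $\beta_{F'}\geq\beta_{F}$ in general, with equality exactly when $w_{F'}=w_{F}$ in $\mathscr{H}$.

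For the implication $(\Leftarrow)$, assume $\beta_{F'}=\beta_{F}$ for all $F'\supseteq F$; by the displayed identity this forces $w_{F'}=w_{F}$ for every such $F'$. Now fix an arbitrary $x\in V$ and apply this with $F'=F\cup\{x\}$: since $x\in F'$, part (a) gives $w_{F}(x)=\langle k_{x},w_{F}\rangle_{\mathscr{H}}=\langle k_{x},w_{F'}\rangle_{\mathscr{H}}=\delta_{x_{1}}(x)$. Because every element of $\mathscr{H}$ is a genuine function on $V$, the identity $w_{F}(x)=\delta_{x_{1}}(x)$ for all $x\in V$ says exactly that $\delta_{x_{1}}=w_{F}\in\mathscr{H}_{F}$, which is the left-hand side (and, a posteriori, $\delta_{x_{1}}\in\mathscr{H}$ with $\|\delta_{x_{1}}\|_{\mathscr{H}}^{2}=\beta_{F}$, consistently with Corollary \ref{cor:proj1}). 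For $(\Rightarrow)$, suppose $\delta_{x_{1}}\in\mathscr{H}_{F}$ for some finite $F$, enlarged if necessary so that $x_{1}\in F$; then $\delta_{x_{1}}\in\mathscr{H}$ and $P_{F}\delta_{x_{1}}=\delta_{x_{1}}$, and for any $F'\supseteq F$ we have $\mathscr{H}_{F}\subseteq\mathscr{H}_{F'}$, hence $P_{F'}\delta_{x_{1}}=\delta_{x_{1}}$ as well. By Lemma \ref{lem:proj1}, $w_{F'}=\delta_{x_{1}}=w_{F}$, so $\beta_{F'}=\|w_{F'}\|_{\mathscr{H}}^{2}=\|w_{F}\|_{\mathscr{H}}^{2}=\beta_{F}$ by (b).

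The one point that needs care is staying honest about the formal regime: all manipulations with $w_{G}$, $K_{G}^{-1}$, and the pairings $\langle k_{z},w_{G}\rangle_{\mathscr{H}}$ have to be valid without presupposing $\delta_{x_{1}}\in\mathscr{H}$, which is why I reprove (a) and (b) instead of quoting Lemmas \ref{lem:proj1}--\ref{lem:proj2} verbatim. Once the pointwise identity $w_{F}=\delta_{x_{1}}$ is in hand, no limiting argument over the net $\{P_{F'}\}_{F'\in\mathscr{F}(V)}$ is needed --- $\delta_{x_{1}}\in\mathscr{H}_{F}$ falls out immediately --- so that is really the only subtlety.
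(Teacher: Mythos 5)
Your proof is correct. The paper states this corollary without any proof, so there is nothing to compare against directly; what you supply is consistent with, and strictly sharper than, the surrounding material. The key point you add that the paper's Monotonicity corollary does not give is the exact Pythagorean identity
\[
\left(K_{F'}^{-1}\delta_{x_{1}}\right)\left(x_{1}\right)=\left(K_{F}^{-1}\delta_{x_{1}}\right)\left(x_{1}\right)+\left\Vert w_{F'}-w_{F}\right\Vert _{\mathscr{H}}^{2},\qquad F\subseteq F',
\]
which upgrades the inequality (\ref{eq:mono1}) to an equality criterion: stabilization of the scalar $\left(K_{F'}^{-1}\delta_{x_{1}}\right)\left(x_{1}\right)$ forces stabilization of the vectors $w_{F'}$, and then the one-point enlargements $F'=F\cup\{x\}$ identify $w_{F}$ with $\delta_{x_{1}}$ pointwise. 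You are also right to reprove the identities $\langle k_{z},w_{G}\rangle_{\mathscr{H}}=\delta_{x_{1}}(z)$ and $\Vert w_{G}\Vert_{\mathscr{H}}^{2}=\beta_{G}$ in the formal regime rather than invoking Lemmas \ref{lem:proj1}--\ref{lem:proj2}, whose statements presuppose $\delta_{x_{1}}\in\mathscr{H}$; the paper's own Monotonicity proof quietly makes that presupposition, whereas your $(\Leftarrow)$ direction cannot. The only caveat, inherited from the paper's conventions rather than introduced by you, is that all of this tacitly assumes each $K_{F'}$ is invertible (the ``strictly positive'' hypothesis of Corollary \ref{cor:sp}); since the right-hand side of the corollary is meaningless otherwise, this is a reasonable standing assumption to make explicit.
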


\begin{cor}[Monotonicity]
 If $F$ and $F'$ are in $\mathscr{F}\left(V\right)$ and $F\subset F'$,
then 
\begin{equation}
\left(K_{F}^{-1}\delta_{x_{1}}\right)\left(x_{1}\right)\leq\left(K_{F'}^{-1}\delta_{x_{1}}\right)\left(x_{1}\right)\label{eq:mono1}
\end{equation}
and 
\begin{equation}
\lim_{F\nearrow V}\left(K_{F}^{-1}\delta_{x_{1}}\right)\left(x_{1}\right)=\left\Vert \delta_{x_{1}}\right\Vert _{\mathscr{H}}^{2}.\label{eq:mono2}
\end{equation}
\end{cor}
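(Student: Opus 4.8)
The plan is to reduce everything to Lemma \ref{lem:proj2}, which gives the identity $\left(K_{F}^{-1}\delta_{x_{1}}\right)\left(x_{1}\right)=\left\Vert P_{F}\delta_{x_{1}}\right\Vert _{\mathscr{H}}^{2}$, and then to exploit that the family $\left\{ P_{F}\right\} _{F\in\mathscr{F}\left(V\right)}$ is a monotone net of projections increasing to the identity. Throughout one assumes $x_{1}\in F$ (implicit in the symbol $K_{F}^{-1}\delta_{x_{1}}$), so that Lemma \ref{lem:proj2} applies to $F$ and, whenever $F\subset F'$, also to $F'$.

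For the monotonicity inequality (\ref{eq:mono1}), I would first note that $F\subset F'$ gives $\mathscr{H}_{F}\subseteq\mathscr{H}_{F'}$, hence $P_{F}P_{F'}=P_{F}$. Applying $P_{F}$ to the vector $P_{F'}\delta_{x_{1}}\in\mathscr{H}$ and using the characterization of $P_{F}\delta_{x_{1}}$ by its values on $F$ from Lemma \ref{lem:proj1} (the relation (\ref{eq:pd7}), valid for all $z\in F'\supseteq F$), one checks that $P_{F}\left(P_{F'}\delta_{x_{1}}\right)=P_{F}\delta_{x_{1}}$. Then the Pythagorean identity yields $\left\Vert P_{F'}\delta_{x_{1}}\right\Vert _{\mathscr{H}}^{2}=\left\Vert P_{F}\delta_{x_{1}}\right\Vert _{\mathscr{H}}^{2}+\left\Vert \left(I-P_{F}\right)P_{F'}\delta_{x_{1}}\right\Vert _{\mathscr{H}}^{2}\geq\left\Vert P_{F}\delta_{x_{1}}\right\Vert _{\mathscr{H}}^{2}$, and Lemma \ref{lem:proj2} translates this into (\ref{eq:mono1}).

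For the limit (\ref{eq:mono2}), the monotonicity just proved shows the net $F\mapsto\left\Vert P_{F}\delta_{x_{1}}\right\Vert _{\mathscr{H}}^{2}$ is nondecreasing along $\mathscr{F}\left(V\right)$, so the limit exists in $\left[0,\infty\right]$ and equals the supremum. If $\delta_{x_{1}}\in\mathscr{H}$ one invokes Corollary \ref{cor:proj1} directly (equivalently: since $\mathscr{H}$ is the closed span of $\left\{ k_{x}\right\} _{x\in V}=\bigcup_{F}\mathscr{H}_{F}$, the net satisfies $P_{F}\to I$ strongly, whence $\left\Vert P_{F}\delta_{x_{1}}\right\Vert _{\mathscr{H}}^{2}\to\left\Vert \delta_{x_{1}}\right\Vert _{\mathscr{H}}^{2}$). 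If $\delta_{x_{1}}\notin\mathscr{H}$ one reads $\left\Vert \delta_{x_{1}}\right\Vert _{\mathscr{H}}^{2}=+\infty$ and argues by contradiction: a finite supremum $M$ would, via $P_{F}P_{F'}\delta_{x_{1}}=P_{F}\delta_{x_{1}}$ and Pythagoras, make $\left\{ P_{F}\delta_{x_{1}}\right\} $ a Cauchy net (because $\left\Vert P_{F'}\delta_{x_{1}}\right\Vert _{\mathscr{H}}^{2}-\left\Vert P_{F}\delta_{x_{1}}\right\Vert _{\mathscr{H}}^{2}\to0$), so it converges to some $g\in\mathscr{H}$ with $g\left(z\right)=\left\langle k_{z},g\right\rangle _{\mathscr{H}}=\lim_{F}\left(P_{F}\delta_{x_{1}}\right)\left(z\right)=\delta_{x_{1}}\left(z\right)$ for every $z\in V$, i.e. $\delta_{x_{1}}=g\in\mathscr{H}$, a contradiction.

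The step needing the most care is this last one: since $\mathscr{F}\left(V\right)$ is merely a directed set, "$P_{F}\to I$" and "$\left\{ P_{F}\delta_{x_{1}}\right\} $ is Cauchy" are statements about nets, and the boundedness hypothesis is genuinely used to extract a limit in $\mathscr{H}$ when $\delta_{x_{1}}$ is not a priori known to lie there. Everything else is routine Hilbert-space bookkeeping built on the formulas for $P_{F}$ already established in Lemmas \ref{lem:proj1}, \ref{lem:proj}, and \ref{lem:proj2}.
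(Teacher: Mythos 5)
Your proof is correct and follows essentially the same route as the paper's: both reduce $\left(K_{F}^{-1}\delta_{x_{1}}\right)\left(x_{1}\right)$ to $\left\Vert P_{F}\delta_{x_{1}}\right\Vert _{\mathscr{H}}^{2}$ via Lemma \ref{lem:proj2} and then use $P_{F}P_{F'}=P_{F}$ to get monotonicity, with the limit statement handled through Corollary \ref{cor:proj1}. Your additional care in the limit step --- in particular reading $\left\Vert \delta_{x_{1}}\right\Vert _{\mathscr{H}}^{2}=+\infty$ when $\delta_{x_{1}}\notin\mathscr{H}$ and running the Cauchy-net contradiction --- is a welcome elaboration of what the paper leaves implicit, but it is not a different argument.
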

\begin{proof}
By (\ref{eq:pd8}), 
\[
\left(K_{F}^{-1}\delta_{x_{1}}\right)\left(x_{1}\right)=\left\Vert P_{F}\delta_{x_{1}}\right\Vert _{\mathscr{H}}^{2}.
\]
Since $\mathscr{H}_{F}\subset\mathscr{H}_{F'}$, we have $P_{F}P_{F'}=P_{F}$,
so 
\[
\left\Vert P_{F}\delta_{x_{1}}\right\Vert _{\mathscr{H}}^{2}=\left\Vert P_{F}P_{F'}\delta_{x_{1}}\right\Vert _{\mathscr{H}}^{2}\leq\left\Vert P_{F'}\delta_{x_{1}}\right\Vert _{\mathscr{H}}^{2}
\]
i.e., 
\[
\left(K_{F}^{-1}\delta_{x_{1}}\right)\left(x_{1}\right)\leq\left(K_{F'}^{-1}\delta_{x_{1}}\right)\left(x_{1}\right).
\]
So (\ref{eq:mono1}) follows; and the limit in (\ref{eq:mono2}) is
monotone.\end{proof}
\begin{thm}
\label{thm:del}Given $V$, $k:V\times V\rightarrow\mathbb{R}$ positive
definite (p.d.). Let $\mathscr{H}=\mathscr{H}\left(k\right)$ be the
corresponding RKHS. Assume $V$ is countable and infinite. Then the
following three conditions \ref{enu:d1}-\ref{enu:d3} are equivalent;
$x_{1}\in V$ is fixed:
\begin{enumerate}[label=(\roman{enumi}),ref=(\roman{enumi})]
\item \label{enu:d1}$\delta_{x_{1}}\in\mathscr{H}$; 
\item \label{enu:d2}$\exists C_{x_{1}}<\infty$ such that for all $F\in\mathscr{F}\left(V\right)$,
the following estimate holds:
\begin{equation}
\left|\xi\left(x_{1}\right)\right|^{2}\leq C_{x_{1}}\underset{F\times F}{\sum\sum}\overline{\xi\left(x\right)}\xi\left(y\right)k\left(x,y\right)\label{eq:d1}
\end{equation}

\item \label{enu:d3}For $F\in\mathscr{F}\left(V\right)$, set 
\begin{equation}
K_{F}=\left(k\left(x,y\right)\right)_{\left(x,y\right)\in F\times F}\label{eq:d2}
\end{equation}
as a $\#F\times\#F$ matrix. See Definition \ref{def:d1}, eq. (\ref{eq:pd5}).
Then
\begin{equation}
\sup_{F\in\mathscr{F}\left(V\right)}\left(K_{F}^{-1}\delta_{x_{1}}\right)\left(x_{1}\right)<\infty.\label{eq:d3}
\end{equation}

\end{enumerate}
\end{thm}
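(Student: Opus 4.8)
The plan is to prove the cycle of implications \ref{enu:d1} $\Rightarrow$ \ref{enu:d2} $\Rightarrow$ \ref{enu:d3} $\Rightarrow$ \ref{enu:d1}, using the reproducing property and the projection formulas from Lemmas \ref{lem:proj1}--\ref{lem:proj2} together with the Monotonicity corollary above.

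For \ref{enu:d1} $\Rightarrow$ \ref{enu:d2}: assume $\delta_{x_{1}}\in\mathscr{H}$. Given $F\in\mathscr{F}(V)$ and coefficients $\{\xi(x)\}_{x\in F}$, form the vector $f=\sum_{x\in F}\xi(x)k_{x}\in\mathscr{H}_{F}\subset\mathscr{H}$. By the reproducing property \eqref{eq:pd31}, $\left\langle \delta_{x_{1}},f\right\rangle _{\mathscr{H}}=\sum_{x\in F}\xi(x)\overline{\delta_{x_{1}}(x)}$\,---\,more carefully, using that the inner product is conjugate linear in the first slot, one gets $\left\langle f,\delta_{x_{1}}\right\rangle_{\mathscr{H}}=\sum_{x\in F}\overline{\xi(x)}\,\delta_{x_{1}}(x)=\overline{\xi(x_{1})}$ since $x_{1}\in F$. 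Cauchy--Schwarz then gives $|\xi(x_{1})|^{2}=|\langle f,\delta_{x_{1}}\rangle_{\mathscr{H}}|^{2}\le\|\delta_{x_{1}}\|_{\mathscr{H}}^{2}\,\|f\|_{\mathscr{H}}^{2}$, and by \eqref{eq:pd3} we have $\|f\|_{\mathscr{H}}^{2}=\sum\sum_{F\times F}\overline{\xi(x)}\xi(y)k(x,y)$. Thus \eqref{eq:d1} holds with $C_{x_{1}}=\|\delta_{x_{1}}\|_{\mathscr{H}}^{2}$.

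For \ref{enu:d2} $\Rightarrow$ \ref{enu:d3}: fix $F$ and choose the specific coefficient vector $\xi=K_{F}^{-1}\delta_{x_{1}}$ (viewing $\delta_{x_{1}}$ as a vector indexed by $F$; here I should note $K_{F}$ is invertible when $k$ is strictly positive definite on $F$, or otherwise pass to a quotient/use the Moore--Penrose inverse, which is the one subtlety to handle cleanly). Then the right-hand side of \eqref{eq:d1} is $\langle K_{F}^{-1}\delta_{x_{1}},K_{F}K_{F}^{-1}\delta_{x_{1}}\rangle=\langle K_{F}^{-1}\delta_{x_{1}},\delta_{x_{1}}\rangle=(K_{F}^{-1}\delta_{x_{1}})(x_{1})$, while the left-hand side is $|(K_{F}^{-1}\delta_{x_{1}})(x_{1})|^{2}$. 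So \eqref{eq:d1} reads $(K_{F}^{-1}\delta_{x_{1}})(x_{1})^{2}\le C_{x_{1}}(K_{F}^{-1}\delta_{x_{1}})(x_{1})$; since this quantity is nonnegative (it equals $\|P_{F}\delta_{x_{1}}\|^{2}_{\mathscr{H}}$ by Lemma \ref{lem:proj2}), dividing gives $(K_{F}^{-1}\delta_{x_{1}})(x_{1})\le C_{x_{1}}$ for all $F$, which is \eqref{eq:d3}.

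For \ref{enu:d3} $\Rightarrow$ \ref{enu:d1}: by Lemma \ref{lem:proj1}, $P_{F}\delta_{x_{1}}=\sum_{y\in F}(K_{F}^{-1}\delta_{x_{1}})(y)k_{y}\in\mathscr{H}$ is well-defined for every finite $F$ regardless of whether $\delta_{x_{1}}\in\mathscr{H}$, and by Lemma \ref{lem:proj2} its squared norm is $(K_{F}^{-1}\delta_{x_{1}})(x_{1})$, which the hypothesis \eqref{eq:d3} bounds uniformly in $F$. The Monotonicity corollary shows that as $F\nearrow V$ along an increasing exhaustion, the projections $P_{F}\delta_{x_{1}}$ have monotonically increasing, bounded norms; since $P_{F}P_{F'}=P_{F}$ for $F\subset F'$, the family $\{P_{F}\delta_{x_{1}}\}$ is Cauchy in $\mathscr{H}$ (the difference $P_{F'}\delta_{x_{1}}-P_{F}\delta_{x_{1}}$ is orthogonal to $\mathscr{H}_{F}$ with squared norm $\|P_{F'}\delta_{x_{1}}\|^{2}-\|P_{F}\delta_{x_{1}}\|^{2}\to0$). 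Hence $g:=\lim_{F\nearrow V}P_{F}\delta_{x_{1}}$ exists in $\mathscr{H}$; it remains to identify $g$ with $\delta_{x_{1}}$ as a function on $V$. For this, evaluate at an arbitrary $z\in V$: pick $F\ni x_{1},z$; then $g(z)=\langle k_{z},g\rangle_{\mathscr{H}}=\lim\langle k_{z},P_{F}\delta_{x_{1}}\rangle_{\mathscr{H}}=\lim\langle P_{F}k_{z},\delta_{x_{1}}\rangle$, and since $k_{z}\in\mathscr{H}_{F}$ for such $F$ we get $\langle k_{z},P_{F}\delta_{x_{1}}\rangle=\langle P_{F}\delta_{x_{1}},k_{z}\rangle$ stabilizes; computing it via \eqref{eq:pd6} yields $\sum_{y\in F}(K_{F}^{-1}\delta_{x_{1}})(y)k(z,y)=(K_{F}K_{F}^{-1}\delta_{x_{1}})(z)=\delta_{x_{1}}(z)$. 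Thus $g(z)=\delta_{x_{1}}(z)$ for all $z$, so $\delta_{x_{1}}=g\in\mathscr{H}$.

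The main obstacle I anticipate is the convergence/identification step in \ref{enu:d3} $\Rightarrow$ \ref{enu:d1}: one must argue carefully that the net $\{P_{F}\delta_{x_{1}}\}_{F\in\mathscr{F}(V)}$ converges in $\mathscr{H}$ (not merely that norms are bounded) and that its limit, a priori just an abstract vector in the completion, really is the function $\delta_{x_{1}}$ pointwise on $V$. Monotonicity of the norms plus the nesting relation $P_{F}P_{F'}=P_{F}$ is exactly what upgrades boundedness to a Cauchy condition, and the reproducing property is what pins down the pointwise values of the limit. A secondary technical point is the possible singularity of $K_{F}$ when $k$ is only positive semidefinite on some finite $F$; this is handled by interpreting $K_{F}^{-1}$ as acting on the range of $K_{F}$ (equivalently, working in $\mathscr{H}_{F}$ directly, where the Gram matrix of a basis is invertible), and noting that all the identities used involve $K_{F}^{-1}\delta_{x_{1}}$ only through the well-defined vector $P_{F}\delta_{x_{1}}$.
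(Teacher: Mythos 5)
Your proof is correct and follows essentially the same route as the paper: Cauchy--Schwarz with $h_\xi=\sum\xi(x)k_x$ for \ref{enu:d1}$\Rightarrow$\ref{enu:d2}, the test vector $\xi=K_F^{-1}\delta_{x_1}$ together with $\zeta^{(F)}(x_1)=\Vert P_F\delta_{x_1}\Vert^2$ for \ref{enu:d2}$\Rightarrow$\ref{enu:d3}, and the monotone, bounded (hence Cauchy) net $P_F\delta_{x_1}$ for \ref{enu:d3}$\Rightarrow$\ref{enu:d1}. In fact your write-up of the last implication and of the range/invertibility issue for $K_F$ is more explicit than the paper's, which dispatches \ref{enu:d3}$\Rightarrow$\ref{enu:d1} by citing Lemma \ref{lem:proj2} and Corollary \ref{cor:proj1}.
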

\begin{proof}
\textbf{\ref{enu:d1}$\Rightarrow$\ref{enu:d2}} For $\xi\in l^{2}\left(F\right)$,
set 
\[
h_{\xi}=\sum_{y\in F}\xi\left(y\right)k_{y}\left(\cdot\right)\in\mathscr{H}_{F}.
\]
Then $\left\langle \delta_{x_{1}},h_{\xi}\right\rangle _{\mathscr{H}}=\xi\left(x_{1}\right)$
for all $\xi$. 

Since $\delta_{x_{1}}\in\mathscr{H}$, then by Schwarz: 
\begin{equation}
\left|\left\langle \delta_{x_{1}},h_{\xi}\right\rangle _{\mathscr{H}}\right|^{2}\leq\left\Vert \delta_{x_{1}}\right\Vert _{\mathscr{H}}^{2}\underset{F\times F}{\sum\sum}\overline{\xi\left(x\right)}\xi\left(y\right)k\left(x,y\right).\label{eq:d31}
\end{equation}
But $\left\langle \delta_{x_{1}},k_{y}\right\rangle _{\mathscr{H}}=\delta_{x_{1},y}=\begin{cases}
1 & y=x_{1}\\
0 & y\neq x_{1}
\end{cases}$; hence $\left\langle \delta_{x_{1}},h_{\xi}\right\rangle _{\mathscr{H}}=\xi\left(x_{1}\right)$,
and so (\ref{eq:d31}) implies (\ref{eq:d1}).

\textbf{\ref{enu:d2}$\Rightarrow$\ref{enu:d3}} Recall the matrix
\[
K_{F}:=\left(\left\langle k_{x},k_{y}\right\rangle \right)_{\left(x,y\right)\in F\times F}
\]
as a linear operator $l^{2}\left(F\right)\rightarrow l^{2}\left(F\right)$,
where 
\begin{equation}
\left(K_{F}\varphi\right)\left(x\right)=\sum_{y\in F}K_{F}\left(x,y\right)\varphi\left(y\right),\;\varphi\in l^{2}\left(F\right).
\end{equation}
By (\ref{eq:d1}), we have 
\begin{equation}
\ker\left(K_{F}\right)\subset\left\{ \varphi\in l^{2}\left(F\right):\varphi\left(x_{1}\right)=0\right\} .
\end{equation}
Equivalently, 
\begin{equation}
\ker\left(K_{F}\right)\subset\left\{ \delta_{x_{1}}\right\} ^{\perp}
\end{equation}
and so $\delta_{x_{1}}\Big|_{F}\in\ker\left(K_{F}\right)^{\perp}=\mbox{ran}\left(K_{F}\right)$,
and $\exists$ $\zeta^{\left(F\right)}\in l^{2}\left(F\right)$ s.t.
\begin{equation}
\delta_{x_{1}}\Big|_{F}=\underset{=:h_{F}}{\underbrace{\sum_{y\in F}\zeta^{\left(F\right)}\left(y\right)k\left(\cdot,y\right)}}.\label{eq:t0}
\end{equation}

\begin{claim*}
$P_{F}\left(\delta_{x_{1}}\right)=h_{F}$, where $P_{F}=$ projection
onto $\mathscr{H}_{F}$; see (\ref{eq:pd4}) and Lemma \ref{lem:proj1}.
(See Fig \ref{fig:proj}.)

\begin{figure}
\includegraphics[width=0.4\textwidth]{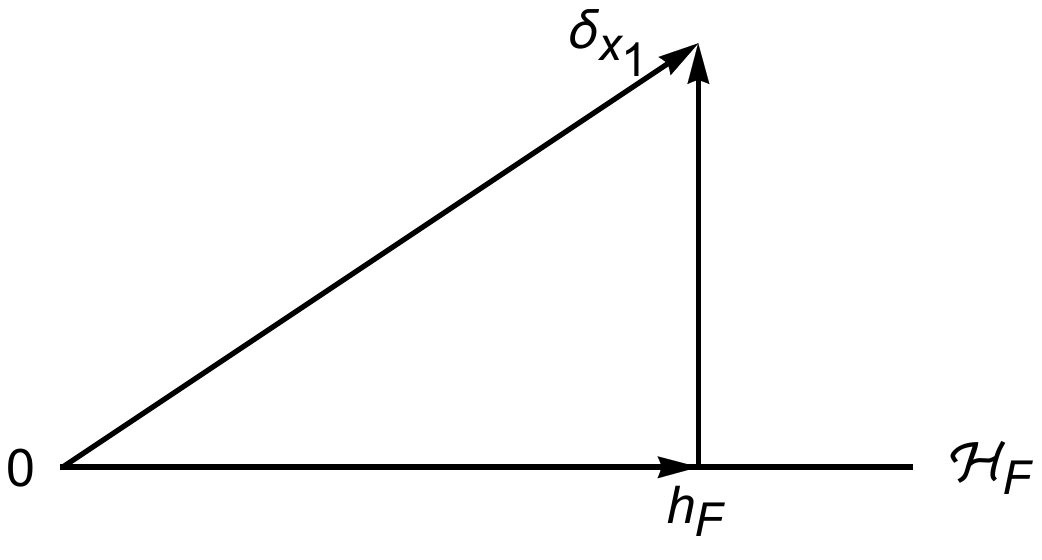}

\protect\caption{\label{fig:proj}$h_{F}:=P_{F}\left(\delta_{x_{1}}\right)$}

\end{figure}
\end{claim*}
\begin{proof}[Proof of the claim]
We only need to prove that $\delta_{x_{1}}-h_{F}\in\mathscr{H}\ominus\mathscr{H}_{F}$,
i.e., 
\begin{equation}
\left\langle \delta_{x_{1}}-h_{F},k_{z}\right\rangle _{\mathscr{H}}=0,\;\forall z\in F.\label{eq:t1}
\end{equation}
But, by (\ref{eq:t0}), 
\[
\text{LHS}_{\left(\ref{eq:t1}\right)}=\delta_{x_{1},z}-\sum_{y\in F}k\left(z,y\right)\zeta^{\left(F\right)}\left(y\right)=0.
\]

\end{proof}
If $F\subset F'$, $F,F'\in\mathscr{F}\left(V\right)$, then $\mathscr{H}_{F}\subset\mathscr{H}_{F'}$,
and $P_{F}P_{F'}=P_{F}$ by easy facts for projections. Hence 
\[
\left\Vert P_{F}\delta_{x_{1}}\right\Vert _{\mathscr{H}}^{2}\leq\left\Vert P_{F'}\delta_{x_{1}}\right\Vert _{\mathscr{H}}^{2},\quad h_{F}:=P_{F}\left(\delta_{x_{1}}\right)
\]
and 
\[
\lim_{F\nearrow V}\left\Vert \delta_{x_{1}}-h_{F}\right\Vert _{\mathscr{H}}=0.
\]

\textbf{\ref{enu:d3}$\Rightarrow$\ref{enu:d1}} Follows from Lemma
\ref{lem:proj2} and Corollary \ref{cor:proj1}.\end{proof}
\begin{cor}
The numbers $\left(\zeta^{\left(F\right)}\left(y\right)\right)_{y\in F}$
in (\ref{eq:t0}) satisfies
\begin{equation}
\zeta^{\left(F\right)}\left(x_{1}\right)=\underset{\left(y,z\right)\in F\times F}{\sum\sum}\zeta^{\left(F\right)}\left(y\right)\zeta^{\left(F\right)}\left(z\right)k\left(y,z\right).
\end{equation}
\end{cor}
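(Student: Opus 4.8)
The plan is to read the identity straight off equation (\ref{eq:t0}), which is the very equation that defines $\zeta^{\left(F\right)}$. Recall that (\ref{eq:t0}) asserts, as an identity between functions on $F$,
\[
\delta_{x_{1}}\big|_{F}(z)\;=\;\sum_{y\in F}\zeta^{\left(F\right)}(y)\,k(z,y),\qquad z\in F,
\]
equivalently $\zeta^{\left(F\right)}=K_{F}^{-1}\bigl(\delta_{x_{1}}\big|_{F}\bigr)$ in $l^{2}(F)$.

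First I would multiply this identity by $\zeta^{\left(F\right)}(z)$ and sum over $z\in F$; this is a finite sum, so there is no convergence issue and no need to know whether $\delta_{x_{1}}\in\mathscr{H}$. On the right one obtains precisely the double sum $\underset{(y,z)\in F\times F}{\sum\sum}\zeta^{\left(F\right)}(y)\zeta^{\left(F\right)}(z)k(y,z)$. On the left one obtains $\sum_{z\in F}\zeta^{\left(F\right)}(z)\delta_{x_{1}}(z)$, which collapses to $\zeta^{\left(F\right)}(x_{1})$ because $x_{1}\in F$ and $\delta_{x_{1}}(z)=1$ exactly when $z=x_{1}$. This yields the claimed equality.

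I expect no genuine obstacle here: the statement is essentially a restatement of a computation already performed inside the proof of Lemma \ref{lem:proj2}, namely equation (\ref{eq:pd81}), where the same quantity $\zeta^{\left(F\right)}(x_{1})=\bigl(K_{F}^{-1}\delta_{x_{1}}\bigr)(x_{1})$ was identified with $\left\Vert P_{F}\delta_{x_{1}}\right\Vert _{\mathscr{H}}^{2}$. The only things to be careful about are that $x_{1}$ is assumed to lie in $F$ (so that the left side really collapses to $\zeta^{\left(F\right)}(x_{1})$), and that $k$ is real-valued and symmetric on $F\times F$, so that the double sum is exactly the $\mathscr{H}$-quadratic form $\left\langle h_{F},h_{F}\right\rangle _{\mathscr{H}}$ for $h_{F}$ as in (\ref{eq:t0}). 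As a byproduct the identity exhibits $\zeta^{\left(F\right)}(x_{1})\geq0$, in agreement with Lemma \ref{lem:proj2}.
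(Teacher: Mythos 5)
Your proof is correct and is exactly the paper's argument: the paper's own proof reads ``Multiply (\ref{eq:t0}) by $\zeta^{\left(F\right)}\left(z\right)$ and carry out the summation,'' which is precisely the computation you spell out. No further comment is needed.
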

\begin{proof}
Multiply (\ref{eq:t0}) by $\zeta^{\left(F\right)}\left(z\right)$
and carry out the summation. \end{proof}
\begin{rem}
To see that (\ref{eq:t0}) is a solution to a linear algebra problem,
with $F=\left\{ x_{i}\right\} _{i=1}^{n}$, note that (\ref{eq:t0})
$\Longleftrightarrow$ 
\begin{equation}
\begin{bmatrix}k\left(x_{1},x_{1}\right) & k\left(x_{1},x_{2}\right) & \cdots & k\left(x_{1},x_{n}\right)\\
k\left(x_{2},x_{1}\right) & k\left(x_{2},x_{2}\right) & \cdots & k\left(x_{2},x_{n}\right)\\
\vdots & \ddots & \ddots & \vdots\\
\vdots & \ddots & \ddots & \vdots\\
k\left(x_{n},x_{1}\right) & k\left(x_{n},x_{2}\right) & \cdots & k\left(x_{n},x_{n}\right)
\end{bmatrix}\begin{bmatrix}\zeta^{\left(F\right)}\left(x_{1}\right)\\
\zeta^{\left(F\right)}\left(x_{2}\right)\\
\vdots\\
\zeta^{\left(F\right)}\left(x_{n-1}\right)\\
\zeta^{\left(F\right)}\left(x_{n}\right)
\end{bmatrix}=\begin{bmatrix}1\\
0\\
\vdots\\
0\\
0
\end{bmatrix}\label{eq:p1}
\end{equation}

\end{rem}
We now resume the general case of $k$ given and positive definite
on $V\times V$.
\begin{cor}
We have
\begin{equation}
\zeta^{\left(F\right)}\left(x_{1}\right)=\left\Vert P_{F}\left(\delta_{x_{1}}\right)\right\Vert _{\mathscr{H}}^{2}
\end{equation}
where 
\begin{equation}
P_{F}\left(\delta_{x_{1}}\right)=\sum_{y\in F}\zeta^{\left(F\right)}\left(y\right)k_{y}\left(\cdot\right)
\end{equation}
and 
\begin{equation}
\zeta^{\left(F\right)}=K_{N}^{-1}\left(\delta_{x_{1}}\right),\quad N:=\#F.
\end{equation}
\end{cor}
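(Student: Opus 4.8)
The plan is to recognize that this corollary is just a restatement, for arbitrary finite $F$, of identities already in hand: Lemma \ref{lem:proj1}, Lemma \ref{lem:proj2}, and the \textbf{Claim} inside the proof of Theorem \ref{thm:del}. First I would unwind the notation: with $N=\#F$ and $F$ enumerated as $\{x_i\}_{i=1}^N$, the vector $\zeta^{(F)}:=K_F^{-1}\delta_{x_1}\in l^2(F)$ is by definition the (unique) solution of the square system (\ref{eq:p1}), i.e.
\[
\sum_{y\in F}k(z,y)\,\zeta^{(F)}(y)=\delta_{x_1}(z),\qquad\forall z\in F,
\]
so that $K_F\zeta^{(F)}=\delta_{x_1}\big|_F$; in the generic case this needs $K_F$ invertible, and if $K_F$ happens to be singular one simply reads $K_F^{-1}$ as the Moore--Penrose pseudo-inverse, the relevant relation $K_F\zeta^{(F)}=\delta_{x_1}\big|_F$ still being available since $\delta_{x_1}\big|_F\in\mbox{ran}(K_F)$. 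This already gives the third displayed equation.

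Next I would establish the middle equation $P_F(\delta_{x_1})=\sum_{y\in F}\zeta^{(F)}(y)k_y(\cdot)$. This is precisely (\ref{eq:pd6}) of Lemma \ref{lem:proj1}; although that lemma was stated under the hypothesis $\delta_{x_1}\in\mathscr{H}$, the identity in fact holds for every finite $F$ with no such assumption, which is the content of the Remark following Lemma \ref{lem:proj} and of the \textbf{Claim} in the proof of Theorem \ref{thm:del}. Concretely I would set $h_F:=\sum_{y\in F}\zeta^{(F)}(y)k_y(\cdot)\in\mathscr{H}_F$ and verify, via the reproducing property (\ref{eq:pd31}), that for each $z\in F$
\[
\langle k_z,\delta_{x_1}-h_F\rangle_{\mathscr{H}}=\delta_{x_1}(z)-\sum_{y\in F}k(z,y)\zeta^{(F)}(y)=0,
\]
so $\delta_{x_1}-h_F\perp\mathscr{H}_F$ and $h_F$ equals $P_F(\delta_{x_1})$ — by definition of the latter symbol when $\delta_{x_1}\notin\mathscr{H}$, and in the genuine orthogonal-projection sense when $\delta_{x_1}\in\mathscr{H}$.

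Finally, for the first equation I would invoke Lemma \ref{lem:proj2}, which gives $\|P_F(\delta_{x_1})\|_{\mathscr{H}}^2=(K_F^{-1}\delta_{x_1})(x_1)=\zeta^{(F)}(x_1)$; alternatively, argue directly from the expansion of $h_F$, namely
\[
\|P_F(\delta_{x_1})\|_{\mathscr{H}}^2=\langle h_F,h_F\rangle_{\mathscr{H}}=\sum_{y\in F}\sum_{z\in F}\overline{\zeta^{(F)}(y)}\,\zeta^{(F)}(z)\,k(y,z),
\]
and then collapse the inner sum through $\sum_{z\in F}k(y,z)\zeta^{(F)}(z)=\delta_{x_1}(y)$ to obtain $\sum_{y\in F}\overline{\zeta^{(F)}(y)}\,\delta_{x_1}(y)=\overline{\zeta^{(F)}(x_1)}$, which equals $\zeta^{(F)}(x_1)$ since it is a nonnegative real number (equivalently, $\zeta^{(F)}(x_1)=(K_F^{-1})_{x_1,x_1}$ is a diagonal entry of a Hermitian positive semi-definite matrix). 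There is essentially no obstacle here; the only point requiring a word of care is the meaning of $P_F(\delta_{x_1})$ when $\delta_{x_1}$ may fail to lie in $\mathscr{H}$, which is resolved by taking (\ref{eq:pd6}) as its definition, consistently with Lemma \ref{lem:proj1} whenever the membership does hold.
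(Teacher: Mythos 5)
Your proposal is correct and follows essentially the same route as the paper: the paper's own proof is exactly your "direct" computation, multiplying the linear system $\sum_{j}k(x_{i},x_{j})\zeta^{(F)}(x_{j})=\delta_{1,i}$ by $\zeta^{(F)}(x_{i})$ and summing, then identifying the resulting double sum with $\left\Vert P_{F}(\delta_{x_{1}})\right\Vert _{\mathscr{H}}^{2}$ via the expansion $P_{F}(\delta_{x_{1}})=\sum_{y}\zeta^{(F)}(y)k_{y}$ from Lemma \ref{lem:proj1}. Your added care about the meaning of $P_{F}(\delta_{x_{1}})$ when $\delta_{x_{1}}\notin\mathscr{H}$ and about conjugation (moot here since $k$ is real-valued) is a reasonable tightening but not a different argument.
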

\begin{proof}
It follows from (\ref{eq:p1}) that
\[
\sum_{j}k\left(x_{i},x_{j}\right)\zeta^{\left(F\right)}\left(x_{j}\right)=\delta_{1,i}
\]
and so multiplying by $\zeta^{\left(F\right)}\left(i\right)$, and
summing over $i$, gives 
\[
\underset{=\left\Vert P_{F}\left(\delta_{x_{1}}\right)\right\Vert _{\mathscr{H}}^{2}}{\underbrace{\sum_{i}\sum_{j}k\left(x_{i},x_{j}\right)\zeta^{\left(F\right)}\left(x_{i}\right)\zeta^{\left(F\right)}\left(x_{j}\right)}}=\zeta^{\left(F\right)}\left(x_{1}\right).
\]
\end{proof}
\begin{cor}
We have
\begin{enumerate}
\item[(i)] 
\begin{equation}
P_{F}\left(\delta_{x_{1}}\right)=\zeta^{\left(F\right)}\left(x_{1}\right)k_{x_{1}}+\sum_{y\in F\backslash\left\{ x_{1}\right\} }\zeta^{\left(F\right)}\left(y\right)k_{y}
\end{equation}
 where $\zeta_{F}$ solves (\ref{eq:p1}), for all $F\in\mathscr{F}\left(V\right)$; 
\item[(ii)]  
\begin{equation}
\left\Vert P_{F}\left(\delta_{x_{1}}\right)\right\Vert _{\mathscr{H}}^{2}=\zeta^{\left(F\right)}\left(x_{1}\right)\label{eq:p4}
\end{equation}
and so in particular:
\item[(iii)] 
\begin{equation}
0<\zeta^{\left(F\right)}\left(x_{1}\right)\leq\left\Vert \delta_{x_{1}}\right\Vert _{\mathscr{H}}^{2}\label{eq:p3}
\end{equation}

\end{enumerate}
\end{cor}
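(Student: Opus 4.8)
The plan is to deduce all three items directly from material already in place, since by definition $\zeta^{(F)}=K_{F}^{-1}\delta_{x_{1}}$ and the identity
$P_{F}\left(\delta_{x_{1}}\right)=\sum_{y\in F}\zeta^{(F)}\left(y\right)k_{y}\left(\cdot\right)$
is exactly Lemma \ref{lem:proj1}, eq. (\ref{eq:pd6}). Thus for part (i) there is nothing to prove beyond isolating the $y=x_{1}$ summand from that finite sum and writing the rest over $F\backslash\left\{ x_{1}\right\}$; I would only recall here (as in the Remark after Lemma \ref{lem:proj}) that the right-hand side of (\ref{eq:pd6}), and hence $\zeta^{(F)}$ and the vector $h_{F}:=P_{F}\left(\delta_{x_{1}}\right)$, make formal sense for every $F\in\mathscr{F}\left(V\right)$ whether or not $\delta_{x_{1}}\in\mathscr{H}$ — and when $K_{F}$ fails to be invertible one simply reads $K_{F}^{-1}\delta_{x_{1}}$ as any solution of the linear system (\ref{eq:p1}), for which $\zeta^{(F)}\left(x_{1}\right)$ is nonetheless well-defined.

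For part (ii) I would invoke Lemma \ref{lem:proj2}: formula (\ref{eq:pd8}) reads $\left(K_{F}^{-1}\delta_{x_{1}}\right)\left(x_{1}\right)=\left\Vert P_{F}\delta_{x_{1}}\right\Vert _{\mathscr{H}}^{2}$, and the left side is $\zeta^{(F)}\left(x_{1}\right)$ by definition, giving (\ref{eq:p4}) at once (this is also the content of the Corollary immediately preceding). Alternatively, a short self-contained derivation works: from the defining system $K_{F}\zeta^{(F)}=\delta_{x_{1}}\big|_{F}$ one gets $\left\langle h_{F},k_{z}\right\rangle _{\mathscr{H}}=\left(K_{F}\zeta^{(F)}\right)\left(z\right)=\delta_{x_{1},z}$ for all $z\in F$, hence $\left\Vert h_{F}\right\Vert _{\mathscr{H}}^{2}=\sum_{z\in F}\overline{\zeta^{(F)}\left(z\right)}\left\langle k_{z},h_{F}\right\rangle _{\mathscr{H}}=\sum_{z\in F}\overline{\zeta^{(F)}\left(z\right)}\,\delta_{x_{1},z}=\zeta^{(F)}\left(x_{1}\right)$, which in particular shows $\zeta^{(F)}\left(x_{1}\right)$ is real and nonnegative.

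For part (iii), the upper bound is immediate: when $\delta_{x_{1}}\in\mathscr{H}$, $P_{F}$ being an orthogonal projection gives $\left\Vert P_{F}\delta_{x_{1}}\right\Vert _{\mathscr{H}}^{2}\leq\left\Vert \delta_{x_{1}}\right\Vert _{\mathscr{H}}^{2}$, which combined with (ii) yields (\ref{eq:p3}) (and (\ref{eq:p3}) is vacuous otherwise); this is equally visible from the monotone limit $\lim_{F\nearrow V}\zeta^{(F)}\left(x_{1}\right)=\left\Vert \delta_{x_{1}}\right\Vert _{\mathscr{H}}^{2}$ of (\ref{eq:mono2}). The only point requiring a genuine — if one-line — argument is strict positivity $\zeta^{(F)}\left(x_{1}\right)>0$: by (ii) it equals $\left\Vert h_{F}\right\Vert _{\mathscr{H}}^{2}\geq0$, and were it $0$ we would have $h_{F}=0$ in $\mathscr{H}$, whence $h_{F}\left(x_{1}\right)=\left\langle k_{x_{1}},h_{F}\right\rangle _{\mathscr{H}}=0$; but evaluating the sum defining $h_{F}$ and using $K_{F}\zeta^{(F)}=\delta_{x_{1}}\big|_{F}$ gives $h_{F}\left(x_{1}\right)=\sum_{y\in F}\zeta^{(F)}\left(y\right)k\left(x_{1},y\right)=\delta_{x_{1},x_{1}}=1$, a contradiction. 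I do not expect any real obstacle; the only thing to keep straight throughout is the harmless abuse of notation by which $P_{F}\left(\delta_{x_{1}}\right)$ and $\zeta^{(F)}$ are treated as defined for all finite $F$ prior to knowing $\delta_{x_{1}}\in\mathscr{H}$.
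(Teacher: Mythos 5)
Your proof is correct and follows essentially the same route as the paper: (i) is read off from Lemma \ref{lem:proj1}, (ii) from the reproducing property applied to $h_{F}=\sum_{y}\zeta^{(F)}(y)k_{y}$ (the paper pairs with $\delta_{x_{1}}$ and uses $P_{F}=P_{F}^{*}=P_{F}^{2}$; your direct computation of $\Vert h_{F}\Vert_{\mathscr{H}}^{2}$ via $\langle k_{z},h_{F}\rangle_{\mathscr{H}}=\delta_{x_{1},z}$ is an equivalent variant that avoids pairing with $\delta_{x_{1}}$), and the upper bound in (iii) from $\Vert P_{F}\delta_{x_{1}}\Vert_{\mathscr{H}}\leq\Vert\delta_{x_{1}}\Vert_{\mathscr{H}}$. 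Your one-line argument for the strict positivity $\zeta^{(F)}(x_{1})>0$ — if it vanished then $h_{F}=0$, contradicting $h_{F}(x_{1})=(K_{F}\zeta^{(F)})(x_{1})=1$ — is a welcome addition, since the paper's proof addresses only (\ref{eq:p4}) and leaves that point implicit.
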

\begin{proof}
Formula (\ref{eq:p4}) follows from the definition of $\zeta^{\left(F\right)}$
as a solution to the matrix problem $K_{N}\zeta^{\left(F\right)}=\delta_{x_{1}}$,
but we may also prove (\ref{eq:p4}) directly from 
\begin{equation}
P_{F}\left(\delta_{x_{1}}\right)=\sum_{y}\zeta^{\left(F\right)}\left(y\right)k_{y}\,.\label{eq:p5}
\end{equation}
Apply $\left\langle \cdot,\delta_{x_{1}}\right\rangle _{\mathscr{H}}$
to both sides in (\ref{eq:p5}), we get 
\[
\underset{\left\Vert P_{F}\left(\delta_{x_{1}}\right)\right\Vert _{\mathscr{H}}^{2}}{\underbrace{\left\langle \delta_{x_{1}},P_{F}\left(\delta_{x_{1}}\right)\right\rangle _{\mathscr{H}}}}=\zeta^{\left(F\right)}\left(x_{1}\right)
\]
since $P_{F}=P_{F}^{*}=P_{F}^{2}$; i.e., a projection in the RKHS
$\mathscr{H}=\mathscr{H}_{V}$ of $k$. \end{proof}
\begin{example}[$\#F=2$]
 Let $F=\left\{ x_{1},x_{2}\right\} $, $K_{F}=\left(k_{ij}\right)_{i,j=1}^{2}$,
where $k_{ij}:=k\left(x_{i},x_{j}\right)$. Then (\ref{eq:p1}) reads
\begin{equation}
\begin{bmatrix}k_{11} & k_{12}\\
k_{21} & k_{22}
\end{bmatrix}\begin{bmatrix}\zeta_{F}\left(x_{1}\right)\\
\zeta_{F}\left(x_{2}\right)
\end{bmatrix}=\begin{bmatrix}1\\
0
\end{bmatrix}.
\end{equation}
Set $D:=\det\left(K_{F}\right)=k_{11}k_{22}-k_{12}k_{21}$, then:
\[
\zeta_{F}\left(x_{1}\right)=\frac{k_{22}}{D},\quad\zeta_{F}\left(x_{2}\right)=-\frac{k_{21}}{D}.
\]

\end{example}

\begin{example}
Let $V=\left\{ x_{1},x_{2},\ldots\right\} $ be an ordered set. Set
$F_{n}:=\left\{ x_{1},\ldots,x_{n}\right\} $. Note that with 
\begin{equation}
D_{n}=\det\left(K_{F_{n}}\right)=\det\left(\left(k\left(x_{i},x_{j}\right)\right)_{i,j=1}^{n}\right),\;\mbox{and}
\end{equation}
\begin{equation}
D'_{n-1}=\left(1,1\right)\;\text{minor of }K_{F_{n}}=\det\left(\left(k\left(x_{i},x_{j}\right)\right)_{i,j=2}^{n}\right);
\end{equation}
then 
\begin{equation}
\zeta^{\left(F_{n}\right)}\left(x_{1}\right)=\frac{D'_{n-1}}{D_{n}}=\left(K_{F_{n}}^{-1}\delta_{x_{1}}\right)\left(x_{1}\right).\label{eq:p2}
\end{equation}
 \end{example}
\begin{cor}
We have 
\[
\frac{1}{k\left(x_{1},x_{1}\right)}\leq\frac{k\left(x_{2},x_{2}\right)}{D_{2}}\leq\cdots\leq\frac{D'_{n-1}}{D_{n}}\leq\cdots\leq\left\Vert \delta_{x_{1}}\right\Vert _{\mathscr{H}}^{2}.
\]
\end{cor}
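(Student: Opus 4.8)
The plan is to recognize the entire chain as a single instance of the monotone exhaustion already established. Fix the enumeration $V=\{x_{1},x_{2},\dots\}$ and put $F_{n}:=\{x_{1},\dots,x_{n}\}$, so that $F_{1}\subset F_{2}\subset\cdots$ and $F_{n}\nearrow V$. First I would identify each displayed ratio with $\zeta^{\left(F_{n}\right)}\left(x_{1}\right)=\bigl(K_{F_{n}}^{-1}\delta_{x_{1}}\bigr)\left(x_{1}\right)$. For $n=1$ the $1\times1$ matrix $K_{F_{1}}=\bigl(k\left(x_{1},x_{1}\right)\bigr)$ gives $\bigl(K_{F_{1}}^{-1}\delta_{x_{1}}\bigr)\left(x_{1}\right)=1/k\left(x_{1},x_{1}\right)$ (this is the general formula $D'_{n-1}/D_{n}$ with the convention $D'_{0}=1$, $D_{1}=k\left(x_{1},x_{1}\right)$). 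For $n=2$ the Example with $\#F=2$ gives $\zeta_{F_{2}}\left(x_{1}\right)=k_{22}/D=k\left(x_{2},x_{2}\right)/D_{2}=D'_{1}/D_{2}$, since $D'_{1}$ is the $(1,1)$-minor $\det\bigl(k\left(x_{2},x_{2}\right)\bigr)$. For general $n$ this is exactly Cramer's rule as recorded in (\ref{eq:p2}), namely $\zeta^{\left(F_{n}\right)}\left(x_{1}\right)=D'_{n-1}/D_{n}$. So every term in the chain equals $\bigl(K_{F_{n}}^{-1}\delta_{x_{1}}\bigr)\left(x_{1}\right)$ for the appropriate $n$.

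Next I would invoke the Monotonicity Corollary, i.e.\ inequality (\ref{eq:mono1}): since $F_{n}\subset F_{n+1}$ we get $\bigl(K_{F_{n}}^{-1}\delta_{x_{1}}\bigr)\left(x_{1}\right)\leq\bigl(K_{F_{n+1}}^{-1}\delta_{x_{1}}\bigr)\left(x_{1}\right)$, which yields the successive inequalities $\tfrac{1}{k\left(x_{1},x_{1}\right)}\leq\tfrac{k\left(x_{2},x_{2}\right)}{D_{2}}\leq\cdots\leq\tfrac{D'_{n-1}}{D_{n}}\leq\cdots$. Equivalently one may argue directly via (\ref{eq:p4}): each term is $\left\Vert P_{F_{n}}\delta_{x_{1}}\right\Vert _{\mathscr{H}}^{2}$, and $P_{F_{n}}=P_{F_{n}}P_{F_{n+1}}$ because $\mathscr{H}_{F_{n}}\subset\mathscr{H}_{F_{n+1}}$, whence $\left\Vert P_{F_{n}}\delta_{x_{1}}\right\Vert _{\mathscr{H}}\leq\left\Vert P_{F_{n+1}}\delta_{x_{1}}\right\Vert _{\mathscr{H}}$.

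Finally, the upper bound by $\left\Vert \delta_{x_{1}}\right\Vert _{\mathscr{H}}^{2}$ is (\ref{eq:p3}), or equivalently Corollary~\ref{cor:proj1} together with the monotone limit (\ref{eq:mono2}): $\sup_{n}\bigl(K_{F_{n}}^{-1}\delta_{x_{1}}\bigr)\left(x_{1}\right)=\sup_{F\in\mathscr{F}\left(V\right)}\bigl(K_{F}^{-1}\delta_{x_{1}}\bigr)\left(x_{1}\right)=\left\Vert \delta_{x_{1}}\right\Vert _{\mathscr{H}}^{2}$, the middle equality holding because $\{F_{n}\}$ is cofinal in $\mathscr{F}\left(V\right)$ and the quantity is monotone. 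In the case $\delta_{x_{1}}\notin\mathscr{H}$ this is read with the convention $\left\Vert \delta_{x_{1}}\right\Vert _{\mathscr{H}}^{2}=+\infty$ (Theorem~\ref{thm:del}), and the last inequality is then vacuous.

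There is no genuine obstacle here; the only point requiring care is the standing invertibility of the finite matrices $K_{F_{n}}$, so that the Cramer ratios $D'_{n-1}/D_{n}$ in (\ref{eq:p2}) are meaningful — this is implicit in the preceding discussion (it holds, e.g., whenever $k$ is strictly positive definite on finite subsets of $V$) — together with the bookkeeping that the first two entries of the chain are indeed the $n=1$ and $n=2$ cases of the general formula $D'_{n-1}/D_{n}$.
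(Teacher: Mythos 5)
Your proposal is correct and follows essentially the same route as the paper: the paper's own (terse) proof likewise identifies each ratio $D'_{n-1}/D_{n}$ with $\zeta^{\left(F_{n}\right)}\left(x_{1}\right)=\left\Vert P_{F_{n}}\delta_{x_{1}}\right\Vert _{\mathscr{H}}^{2}$ via (\ref{eq:p2}) and then applies the monotonicity $\left\Vert P_{F}\delta_{x_{1}}\right\Vert _{\mathscr{H}}^{2}\leq\left\Vert P_{F'}\delta_{x_{1}}\right\Vert _{\mathscr{H}}^{2}\leq\left\Vert \delta_{x_{1}}\right\Vert _{\mathscr{H}}^{2}$ for $F\subset F'$. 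Your added bookkeeping (the $n=1,2$ base cases, the invertibility caveat, and the $+\infty$ convention when $\delta_{x_{1}}\notin\mathscr{H}$) is consistent with the paper's standing assumptions and fills in details the paper leaves implicit.
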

\begin{proof}
Follows from (\ref{eq:p2}), and if $F\subset F'$ are two finite
subsets, then 
\[
\left\Vert P_{F}\left(\delta_{x_{1}}\right)\right\Vert _{\mathscr{H}}^{2}\leq\left\Vert P_{F'}\left(\delta_{x_{1}}\right)\right\Vert _{\mathscr{H}}^{2}\leq\left\Vert \delta_{x_{1}}\right\Vert _{\mathscr{H}}^{2}.
\]

\end{proof}
Let $k:V\times V\rightarrow\mathbb{R}$ be as specified above. Let
$\mathscr{H}=\mathscr{H}\left(k\right)$ be the RKHS. We set $\mathscr{F}\left(V\right)$:=
all finite subsets of $V$; and if $x\in V$ is fixed, $\mathscr{F}_{x}\left(V\right):=\left\{ F\in\mathscr{F}\left(V\right)\:|\: x\in F\right\} $. 

For $F\in\mathscr{F}\left(V\right)$, let $K_{F}$ be the $\#F\times\#F$
matrix given by $\left(k\left(x,y\right)\right)_{\left(x,y\right)\in F\times F}$.
Following \cite{KZ96}, we say that $k$ is \emph{strictly positive}
iff (Def.) $\det K_{F}>0$ for all $F\in\mathscr{F}\left(V\right)$. 

Set $D_{F}:=\det K_{F}$. If $x\in V$, and $F\in\mathscr{F}_{x}\left(V\right)$,
set $K'_{F}:=$ the minor in $K_{F}$ obtained by omitting row $x$
and column $x$, see Fig \ref{fig:minor}.

\begin{figure}[H]
\begin{tabular}[t]{ccc}
\includegraphics[width=0.3\columnwidth]{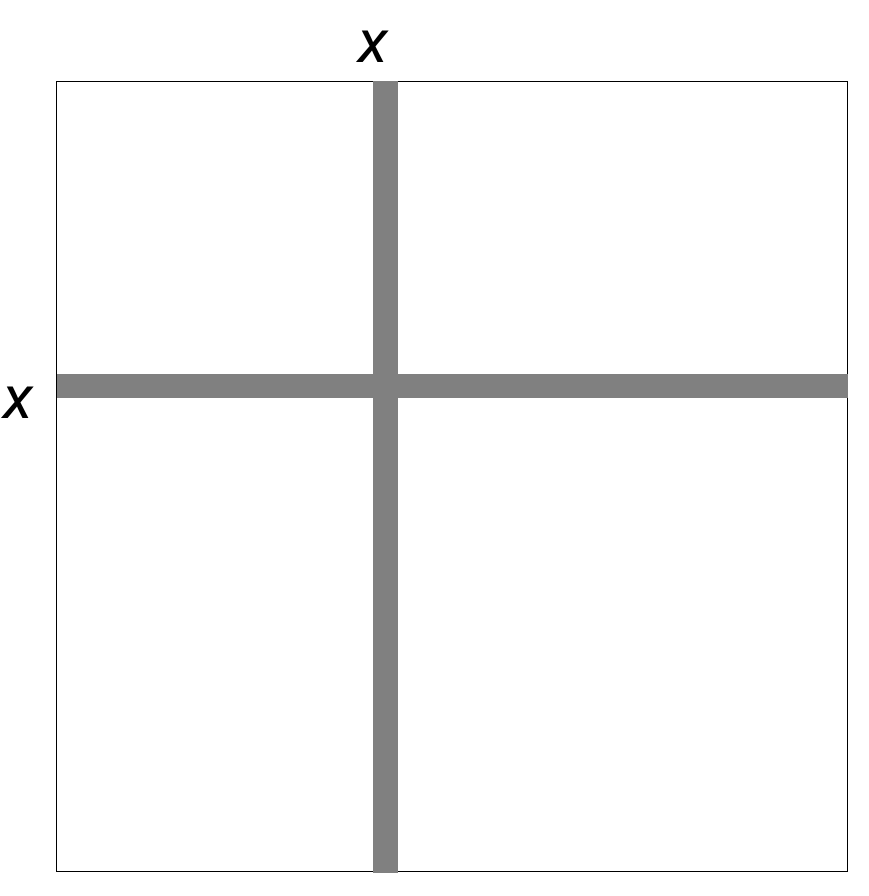} &  & \includegraphics[width=0.3\columnwidth]{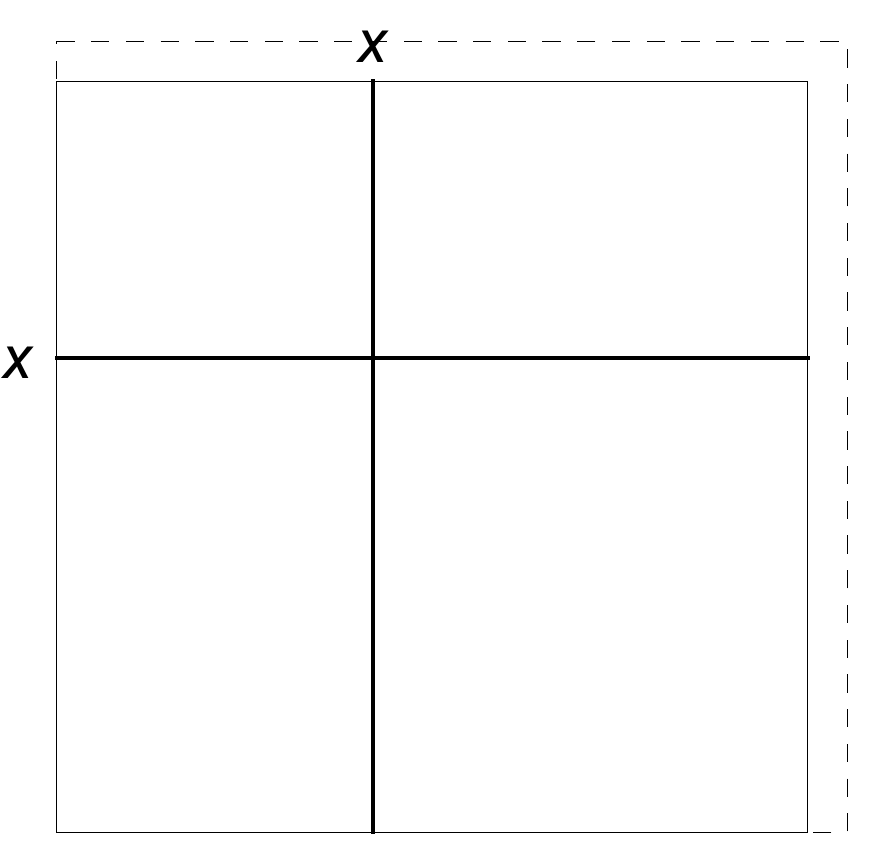}\tabularnewline
\end{tabular}

\protect\caption{\label{fig:minor}The $\left(x,x\right)$ minors, $K_{F}\rightarrow K_{F}'$. }

\end{figure}

\begin{cor}
\label{cor:sp}Suppose $k:V\times V\rightarrow\mathbb{R}$ is strictly
positive. Let $x\in V$. Then 
\begin{equation}
\delta_{x}\in\mathscr{H}\Longleftrightarrow\sup_{F\in\mathscr{F}_{x}\left(V\right)}\frac{D'_{F}}{D_{F}}<\infty.
\end{equation}

\end{cor}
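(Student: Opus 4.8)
The plan is to deduce this straight from Theorem \ref{thm:del}, reducing it to the identification of the quantity $\left(K_{F}^{-1}\delta_{x}\right)\left(x\right)$ with the determinant ratio $D'_{F}/D_{F}$.

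First I would fix $x\in V$ and take an arbitrary $F\in\mathscr{F}_{x}\left(V\right)$. Strict positivity gives $D_{F}=\det K_{F}>0$, so $K_{F}$ is invertible on $l^{2}\left(F\right)$, and the linear system $K_{F}\zeta=\delta_{x}\big|_{F}$ (this is (\ref{eq:p1}) with $x$ playing the role of $x_{1}$) has the unique solution $\zeta=K_{F}^{-1}\delta_{x}$. By Cramer's rule, its $x$-th coordinate is $\left(K_{F}^{-1}\delta_{x}\right)\left(x\right)=\left(K_{F}^{-1}\right)_{xx}=D'_{F}/D_{F}$, where $D'_{F}$ is the $\left(x,x\right)$-minor of $K_{F}$; this is exactly the computation already recorded in (\ref{eq:p2}) for the special case $x=x_{1}$, $F=F_{n}$, and nothing changes for a general diagonal position $x$ and a general $F\in\mathscr{F}_{x}(V)$.

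Next I would check that the supremum over $\mathscr{F}\left(V\right)$ in Theorem \ref{thm:del}, condition \ref{enu:d3}, coincides with the supremum over $\mathscr{F}_{x}\left(V\right)$. By Lemma \ref{lem:proj2} together with Lemma \ref{lem:proj1}, $\left(K_{F}^{-1}\delta_{x}\right)\left(x\right)=\left\Vert P_{F}\delta_{x}\right\Vert _{\mathscr{H}}^{2}\geq0$, and this quantity vanishes when $x\notin F$ (then $\delta_{x}\big|_{F}=0$, hence $P_{F}\delta_{x}=0$). Moreover the Monotonicity Corollary (\ref{eq:mono1}) shows that $F\mapsto\left(K_{F}^{-1}\delta_{x}\right)\left(x\right)$ is nondecreasing under inclusion, so passing from any $F$ to $F\cup\{x\}$ can only increase it. Therefore
\[
\sup_{F\in\mathscr{F}\left(V\right)}\left(K_{F}^{-1}\delta_{x}\right)\left(x\right)=\sup_{F\in\mathscr{F}_{x}\left(V\right)}\left(K_{F}^{-1}\delta_{x}\right)\left(x\right)=\sup_{F\in\mathscr{F}_{x}\left(V\right)}\frac{D'_{F}}{D_{F}}.
\]
Finally, by the equivalence \ref{enu:d1}$\Leftrightarrow$\ref{enu:d3} in Theorem \ref{thm:del}, the left-hand side is finite if and only if $\delta_{x}\in\mathscr{H}$, and the Corollary follows at once.

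I do not expect a real obstacle here: the statement is essentially a repackaging of Theorem \ref{thm:del} once Cramer's rule is invoked. The only steps requiring a line of care are the Cramer's-rule identity for a general diagonal entry of $K_{F}^{-1}$ (rather than only the top-left entry treated in (\ref{eq:p2})), and the harmless replacement of the index set $\mathscr{F}\left(V\right)$ by $\mathscr{F}_{x}\left(V\right)$ via monotonicity and nonnegativity — both indicated above.
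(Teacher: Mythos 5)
Your proposal is correct and is exactly the argument the paper intends (the paper leaves this corollary without an explicit proof, but it is meant to follow from Theorem \ref{thm:del}\ref{enu:d3} combined with the Cramer's-rule identity $\left(K_{F}^{-1}\delta_{x}\right)\left(x\right)=D'_{F}/D_{F}$ already recorded in (\ref{eq:p2}) for the case $x=x_{1}$). Your extra care in passing from $\mathscr{F}\left(V\right)$ to $\mathscr{F}_{x}\left(V\right)$ via monotonicity and the vanishing of $P_{F}\delta_{x}$ when $x\notin F$ is a point the paper glosses over, and it is handled correctly.
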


\subsection{Unbounded containment in RKHSs}
\begin{defn}
Let $\mathscr{K}$ and $\mathscr{H}$ be two Hilbert spaces. We say
that $\mathscr{K}$ is \emph{unboundedly contained} in $\mathscr{H}$
if there is a dense subspace $\mathscr{K}_{0}\subset\mathscr{K}$
such that $\mathscr{K}_{0}\subset\mathscr{H}$; and the inclusion
operator, with $\mathscr{K}_{0}$ as its dense domain, is \emph{closed},
i.e., 
\[
\mathscr{K}\overset{incl}{\hookrightarrow}\mathscr{H},\quad dom\left(incl\right)=\mathscr{K}_{0}.
\]
Let $k:V\times V\rightarrow\mathbb{R}$ be a p.d. kernel, and let
$\mathscr{H}$ be the corresponding RKHS. Set $\mathscr{K}=l^{2}\left(V\right)$,
and 
\begin{equation}
\mathscr{K}_{0}=span\left\{ \delta_{x}\:|\: x\in V\right\} .\label{eq:in1}
\end{equation}
\end{defn}
\begin{prop}
\label{prop:ubc}If $\delta_{x}\in\mathscr{H}$ for $\forall x\in V$,
then $l^{2}\left(V\right)$ is unboundedly contained in $\mathscr{H}$. \end{prop}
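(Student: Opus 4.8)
The plan is to exhibit the inclusion $\mathscr{K}_{0}\hookrightarrow\mathscr{H}$ as the restriction of a closed operator, using only the reproducing property (\ref{eq:pd31}) and the elementary fact that $\mathscr{K}_{0}=\mathrm{span}\{\delta_{x}:x\in V\}$, being the finitely supported functions on $V$, is dense in $l^{2}\left(V\right)$. By hypothesis $\delta_{x}\in\mathscr{H}$ for every $x\in V$, so $\mathscr{K}_{0}\subset\mathscr{H}$, and we may form the inclusion operator $J_{0}:\mathscr{K}_{0}\rightarrow\mathscr{H}$, $J_{0}\varphi=\varphi$, with dense domain $\mathscr{K}_{0}\subset l^{2}\left(V\right)$.

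First I would show $J_{0}$ is closable. Suppose $\varphi_{n}\in\mathscr{K}_{0}$ with $\varphi_{n}\rightarrow0$ in $l^{2}\left(V\right)$ and $J_{0}\varphi_{n}=\varphi_{n}\rightarrow\psi$ in $\mathscr{H}$; I must show $\psi=0$. Since each $\varphi_{n}\in\mathscr{H}$, the reproducing property gives, for every fixed $y\in V$,
\[
\psi\left(y\right)=\left\langle k_{y},\psi\right\rangle _{\mathscr{H}}=\lim_{n}\left\langle k_{y},\varphi_{n}\right\rangle _{\mathscr{H}}=\lim_{n}\varphi_{n}\left(y\right)=0,
\]
the last equality because $l^{2}$-convergence forces coordinatewise convergence. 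As $\mathscr{H}$ is a space of functions on $V$, $\psi\equiv0$ in $\mathscr{H}$; hence $J_{0}$ is closable.

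Next I would pass to the closure $J:=\overline{J_{0}}$, with domain $\mathscr{K}_{1}:=\mathrm{dom}\left(\overline{J_{0}}\right)\supseteq\mathscr{K}_{0}$. Then $\mathscr{K}_{1}$ is still dense in $l^{2}\left(V\right)$ and $J$ is, by construction, a closed operator from $l^{2}\left(V\right)$ into $\mathscr{H}$. It remains to check that $J$ really is the set-theoretic inclusion on $\mathscr{K}_{1}$, i.e., that $f$ and $Jf$ agree as functions on $V$ for $f\in\mathscr{K}_{1}$: if $\varphi_{n}\in\mathscr{K}_{0}$ with $\varphi_{n}\rightarrow f$ in $l^{2}\left(V\right)$ and $\varphi_{n}\rightarrow Jf$ in $\mathscr{H}$, then for each $y\in V$ the same reproducing-property computation gives $f\left(y\right)=\lim_{n}\varphi_{n}\left(y\right)=\left\langle k_{y},Jf\right\rangle _{\mathscr{H}}=\left(Jf\right)\left(y\right)$. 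Thus $\mathscr{K}_{1}\subset\mathscr{H}$ and the inclusion, with dense domain $\mathscr{K}_{1}$, is closed — which is exactly the definition of $l^{2}\left(V\right)$ being unboundedly contained in $\mathscr{H}$.

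The only genuine subtlety, and it is a minor one, is that the operator with domain exactly $\mathscr{K}_{0}$ is in general not closed, since its graph norm need not be complete on $\mathscr{K}_{0}$, so one must pass to the closure; the definition of ``unboundedly contained'' asks only for the existence of \emph{some} dense subspace on which the inclusion is closed, and $\mathscr{K}_{1}$ serves this role. Everything else is a one-line application of the continuity of point evaluation in a RKHS.
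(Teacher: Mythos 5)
Your proof is correct, and it takes a genuinely different route from the paper's. The paper establishes closability of the inclusion $L$ indirectly: it introduces an auxiliary operator $M:\mathscr{H}\rightarrow l^{2}\left(V\right)$ with $Mk_{x}:=\delta_{x}$ on $\mathrm{span}\left\{ k_{x}\right\} $, verifies the duality relation $\left\langle k,Mf\right\rangle _{l^{2}}=\left\langle k,f\right\rangle _{\mathscr{H}}$ (which rests on $\left\langle \delta_{x},k_{y}\right\rangle _{\mathscr{H}}=\delta_{x,y}$), and concludes $M\subset L^{*}$; since $M$ is densely defined, $L^{*}$ is densely defined, and $L$ is closable by the standard von Neumann criterion. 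You instead verify closability directly from the sequential definition, using that both $l^{2}$-convergence and $\mathscr{H}$-convergence imply pointwise convergence on $V$ (the latter by the reproducing property), so that a sequence tending to $0$ in $l^{2}$ and to $\psi$ in $\mathscr{H}$ forces $\psi\equiv0$. Your argument is more elementary and self-contained; the paper's approach buys the explicit adjoint pair $\left(L,M\right)$, which it reuses immediately afterwards in the corollary on the polar decomposition $L=U\left(L^{*}L\right)^{1/2}$. You are also more careful than the paper on one point: the inclusion with domain exactly $\mathrm{span}\left\{ \delta_{x}\right\} $ is in general only closable, not closed, and you explicitly pass to the closure $\overline{J_{0}}$ and re-verify that this closed extension is still the set-theoretic inclusion on its larger domain $\mathscr{K}_{1}$; the paper's proof stops at ``$L$ is closable'' and leaves that last step implicit.
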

\begin{proof}
Recall that $\mathscr{H}$ is the RKHS defined for a fixed p.d. kernel
$k:V\times V\rightarrow\mathbb{R}$. Let $k_{x}$ be the vector in
$\mathscr{H}$, given by $k_{x}\left(y\right)=k\left(x,y\right)$,
s.t. 
\begin{equation}
f\left(x\right)=\left\langle k_{x},f\right\rangle _{\mathscr{H}},\quad\forall f\in\mathscr{H}.\label{eq:in2}
\end{equation}
To finish the proof we will need:\end{proof}
\begin{lem}
\label{lem:in1}The following equation 
\begin{equation}
\left\langle \delta_{x},k_{y}\right\rangle _{\mathscr{H}}=\delta_{x,y}\label{eq:in3}
\end{equation}
holds if $\delta_{x}\in\mathscr{H}$ for $\forall x\in V$. \end{lem}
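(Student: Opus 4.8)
The plan is to read off (\ref{eq:in3}) directly from the reproducing property (\ref{eq:pd31}), once we observe that both vectors in the pairing genuinely live in $\mathscr{H}$. By the standing hypothesis, $\delta_{x}\in\mathscr{H}$ for every $x\in V$, and $k_{y}\in\mathscr{H}$ by the very construction of the RKHS (Definition \ref{def:d1}); hence the inner product $\left\langle \delta_{x},k_{y}\right\rangle _{\mathscr{H}}$ is well defined for all $x,y\in V$, and there is something to compute.

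First I would apply the reproducing identity (\ref{eq:pd31}) with the test function $\varphi:=\delta_{x}\in\mathscr{H}$, evaluated at the point $y\in V$. This gives
\[
\left\langle k_{y},\delta_{x}\right\rangle _{\mathscr{H}}=\delta_{x}\left(y\right)=\begin{cases}1 & y=x\\0 & y\neq x\end{cases}=\delta_{x,y}.
\]
Next, since $k$ is real-valued the quantity $\delta_{x,y}$ is real, and the $\mathscr{H}$-inner product is conjugate-symmetric (physicists' convention: conjugate-linear in the first slot, linear in the second). Taking complex conjugates of the displayed line therefore yields
\[
\left\langle \delta_{x},k_{y}\right\rangle _{\mathscr{H}}=\overline{\left\langle k_{y},\delta_{x}\right\rangle _{\mathscr{H}}}=\overline{\delta_{x,y}}=\delta_{x,y},
\]
which is precisely (\ref{eq:in3}).

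I do not expect a genuine obstacle here: the entire content of the lemma is that the hypothesis $\delta_{x}\in\mathscr{H}$ is exactly what licenses plugging $\delta_{x}$ into the reproducing identity in the first place, after which the conclusion is forced. (If one preferred to avoid quoting (\ref{eq:pd31}) for a completed element, an alternative is to expand $\delta_{x}$ as an $\mathscr{H}$-limit of finite sums $\sum_{z}c_{z}k_{z}$ from the dense span, use continuity of $\left\langle \,\cdot\,,k_{y}\right\rangle _{\mathscr{H}}$, and identify the limit via $\left\langle k_{z},k_{y}\right\rangle _{\mathscr{H}}=k\left(z,y\right)$; or simply invoke Lemma \ref{lem:proj2} — but the one-line argument above is the cleanest.) Once (\ref{eq:in3}) is in hand, it feeds back into the proof of Proposition \ref{prop:ubc}: it shows that on the dense domain $\mathscr{K}_{0}=\operatorname{span}\{\delta_{x}\}$ the inclusion $l^{2}\left(V\right)\hookrightarrow\mathscr{H}$ is graph-closable, because $\delta_{x}$ is the unique element of $\mathscr{H}$ whose $\mathscr{H}$-inner products against the total set $\{k_{y}\}_{y\in V}$ agree with those of $\delta_{x}\in l^{2}\left(V\right)$, so no sequence in $\mathscr{K}_{0}$ can converge to $0$ in $\mathscr{K}$ while converging to a nonzero vector in $\mathscr{H}$.
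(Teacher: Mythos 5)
Your proof is correct and is essentially the paper's own argument: the paper simply notes that (\ref{eq:in3}) is immediate from the reproducing property (\ref{eq:in2}), and you spell this out by applying it to $\varphi=\delta_{x}$ at the point $y$ and then using conjugate-symmetry. Nothing more is needed.
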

\begin{proof}
(\ref{eq:in3}) is immediate from (\ref{eq:in2}). \end{proof}
\begin{lem}
On 
\begin{equation}
span\left\{ k_{x}\:|\: x\in V\right\} \subset\mathscr{H}\label{eq:in4}
\end{equation}
define $Mk_{x}:=\delta_{x}$, then by Lemma \ref{lem:in1}, $M$ extends
to be a well defined operator $M:\mathscr{H}\rightarrow l^{2}\left(V\right)$
with dense domain (\ref{eq:in4}). We have 
\begin{equation}
\left\langle k,Mf\right\rangle _{l^{2}\left(V\right)}=\left\langle k,f\right\rangle _{\mathscr{H}},\quad\forall k\in span\left\{ \delta_{x}\right\} ,\;\forall f\in dom\left(M\right).\label{eq:in5}
\end{equation}
\end{lem}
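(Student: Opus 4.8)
The plan is to check three things in order: that the assignment $k_{x}\mapsto\delta_{x}$ is well defined on $span\{k_{x}:x\in V\}$ (so that $M$ is a genuine linear map), that this span is a dense domain inside $\mathscr{H}$ with range in $l^{2}(V)$, and finally that the intertwining identity (\ref{eq:in5}) holds. The only point carrying any content is well-definedness, and this is exactly what Lemma \ref{lem:in1} is designed to supply.

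\emph{Well-definedness.} First I would take $F\in\mathscr{F}(V)$ and scalars $\{c_{x}\}_{x\in F}$ with $f:=\sum_{x\in F}c_{x}k_{x}=0$ in $\mathscr{H}$, and show $\sum_{x\in F}c_{x}\delta_{x}=0$ in $l^{2}(V)$. Pairing $f$ with $\delta_{y}$ for an arbitrary $y\in V$ and using Lemma \ref{lem:in1}, $0=\langle\delta_{y},f\rangle_{\mathscr{H}}=\sum_{x\in F}c_{x}\langle\delta_{y},k_{x}\rangle_{\mathscr{H}}=\sum_{x\in F}c_{x}\delta_{y,x}=c_{y}$; since $y$ was arbitrary, every $c_{y}$ vanishes, hence $\sum_{x\in F}c_{x}\delta_{x}=0$. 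Thus $k_{x}\mapsto\delta_{x}$ extends unambiguously to a linear map $M$ on $dom(M):=span\{k_{x}:x\in V\}$, and since each $Mf=\sum_{x\in F}c_{x}\delta_{x}$ is finitely supported, $Mf\in l^{2}(V)$; so $M:dom(M)\rightarrow l^{2}(V)$. The domain $dom(M)=span\{k_{x}:x\in V\}$ is dense in $\mathscr{H}$ by the very construction of $\mathscr{H}$ in Definition \ref{def:d1} as the Hilbert completion of that span, so $M$ is a densely defined operator from $\mathscr{H}$ into $l^{2}(V)$.

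\emph{The identity (\ref{eq:in5}).} By (conjugate-)linearity it suffices to take $k=\delta_{x}$ with $x\in V$ and $f=k_{y}$ with $y\in V$. Then $\langle\delta_{x},Mk_{y}\rangle_{l^{2}(V)}=\langle\delta_{x},\delta_{y}\rangle_{l^{2}(V)}=\delta_{x,y}$, while $\langle\delta_{x},k_{y}\rangle_{\mathscr{H}}=\delta_{x,y}$ by Lemma \ref{lem:in1}; the two sides agree. Extending linearly in $f$ over $dom(M)$ and conjugate-linearly in $k$ over $span\{\delta_{x}:x\in V\}$ yields (\ref{eq:in5}) in general.

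\emph{Main obstacle.} There is essentially none beyond bookkeeping: the subtlety is simply that the kernels $k_{x}$ need not be linearly independent as elements of $\mathscr{H}$, because of the quotient by zero-norm vectors in Definition \ref{def:d1}, so well-definedness of $M$ is not automatic and genuinely requires the orthogonality relation of Lemma \ref{lem:in1}. No boundedness estimate is needed or claimed --- $M$ is in general unbounded, which is precisely the phenomenon the surrounding ``unbounded containment'' discussion is about --- so the argument stops with the densely defined $M$ and the identity (\ref{eq:in5}), which exhibits $M$ as (a restriction of) the adjoint of the inclusion $span\{\delta_{x}\}\hookrightarrow\mathscr{H}$.
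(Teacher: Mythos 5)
Your proposal is correct and follows essentially the same route as the paper: the paper's proof likewise reduces (\ref{eq:in5}) by linearity to the single identity $\left\langle \delta_{x},\delta_{y}\right\rangle _{l^{2}}=\left\langle \delta_{x},k_{y}\right\rangle _{\mathscr{H}}$ on generators and cites Lemma \ref{lem:in1}. Your explicit well-definedness check (pairing a vanishing combination $\sum c_{x}k_{x}$ against $\delta_{y}$ to force $c_{y}=0$) is a detail the paper leaves implicit, and it is a correct and worthwhile addition.
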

\begin{proof}
By linearity, it is enough to prove that
\begin{equation}
\left\langle \delta_{x},\delta_{y}\right\rangle _{l^{2}}=\left\langle \delta_{x},k_{y}\right\rangle _{\mathscr{H}}\label{eq:in6}
\end{equation}
holds for $\forall x,y\in V$. But (\ref{eq:in6}) follows immediate
from Lemma \ref{lem:in1}.\end{proof}
\begin{cor}
If $L:l^{2}\left(V\right)\rightarrow\mathscr{H}$ denotes the inclusion
mapping with $dom\left(L\right)=span\left\{ \delta_{x}:x\in V\right\} $,
then we conclude that 
\begin{equation}
L\subset M^{*},\;\mbox{and}\; M\subset L^{*}.
\end{equation}
Since $dom\left(M\right)$ is dense in $\mathscr{H}$, it follows
that $L^{*}$ has dense domain; and that therefore $L$ is closable. \end{cor}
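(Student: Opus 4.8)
The plan is to unwind the definitions of the Hilbert-space adjoints of the two densely defined (generally unbounded) operators $L$ and $M$, feeding in the key identity (\ref{eq:in5}) from the preceding lemma. Recall that for a densely defined $T\colon\mathscr{H}_{1}\to\mathscr{H}_{2}$, a vector $g$ lies in $\mathrm{dom}(T^{*})$ precisely when the functional $f\mapsto\left\langle g,Tf\right\rangle _{\mathscr{H}_{2}}$ on $\mathrm{dom}(T)$ is represented by a vector of $\mathscr{H}_{1}$, and that vector is $T^{*}g$. So the whole corollary is a matter of checking representability and keeping track of domains.

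First I would verify $L\subset M^{*}$. Since $M$ has dense domain $\mathrm{span}\{k_{x}\}$ in $\mathscr{H}$, the adjoint $M^{*}\colon l^{2}(V)\to\mathscr{H}$ is well defined. Take $g\in\mathrm{dom}(L)=\mathrm{span}\{\delta_{x}\}$; by the standing hypothesis each $\delta_{x}\in\mathscr{H}$, so $g$ is also a vector of $\mathscr{H}$, namely $Lg$. Identity (\ref{eq:in5}) reads $\left\langle g,Mf\right\rangle _{l^{2}(V)}=\left\langle g,f\right\rangle _{\mathscr{H}}=\left\langle Lg,f\right\rangle _{\mathscr{H}}$ for all $f\in\mathrm{dom}(M)$, which is exactly the assertion that $g\in\mathrm{dom}(M^{*})$ with $M^{*}g=Lg$. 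Hence $L\subset M^{*}$.

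Next, $M\subset L^{*}$ is obtained symmetrically. Since $\{\delta_{x}\}$ is an orthonormal basis of $l^{2}(V)$, $\mathrm{dom}(L)=\mathrm{span}\{\delta_{x}\}$ is dense there, so $L^{*}\colon\mathscr{H}\to l^{2}(V)$ is defined. Fix $f\in\mathrm{dom}(M)$; for $g\in\mathrm{dom}(L)$, taking complex conjugates in (\ref{eq:in5}) gives $\left\langle f,Lg\right\rangle _{\mathscr{H}}=\overline{\left\langle g,f\right\rangle _{\mathscr{H}}}=\overline{\left\langle g,Mf\right\rangle _{l^{2}(V)}}=\left\langle Mf,g\right\rangle _{l^{2}(V)}$, so the functional $g\mapsto\left\langle f,Lg\right\rangle _{\mathscr{H}}$ is represented by $Mf\in l^{2}(V)$; thus $f\in\mathrm{dom}(L^{*})$ and $L^{*}f=Mf$. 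Hence $M\subset L^{*}$.

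Finally, $\mathrm{dom}(M)=\mathrm{span}\{k_{x}\}$ is dense in $\mathscr{H}$ by the construction of the RKHS, and $\mathrm{dom}(M)\subset\mathrm{dom}(L^{*})$ by the inclusion just proved, so $L^{*}$ is densely defined; invoking the standard fact that a densely defined operator is closable iff its adjoint is densely defined, we conclude that $L$ is closable. I do not expect a real obstacle: everything reduces to (\ref{eq:in5}) together with adjoint-domain bookkeeping, and the only point worth flagging is the harmless identification of $g\in\mathrm{span}\{\delta_{x}\}\subset l^{2}(V)$ with $Lg\in\mathscr{H}$, which is legitimate exactly because of the hypothesis $\delta_{x}\in\mathscr{H}$.
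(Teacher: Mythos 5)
Your argument is correct and is exactly the intended one: the paper states this corollary as an immediate consequence of the identity (\ref{eq:in5}), and your proof simply supplies the routine adjoint-domain bookkeeping (representability of the relevant functionals, plus the standard fact that a densely defined operator is closable iff its adjoint is densely defined). Nothing is missing, and the identification of $g\in\mathrm{span}\{\delta_{x}\}$ with $Lg\in\mathscr{H}$ that you flag is indeed the only point requiring the hypothesis $\delta_{x}\in\mathscr{H}$.
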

\begin{rem}
This also completes the proof of Proposition \ref{prop:ubc}.\end{rem}
\begin{cor}
Suppose $k:V\times V\rightarrow\mathbb{R}$ is as given, and that
$\mathscr{H}=RKHS\left(k\right)$. Let $L$ be the densely defined
inclusion mapping $l^{2}\left(V\right)\rightarrow\mathscr{H}$. Then
$L^{*}L$ is selfadjoint with dense domain in $l^{2}\left(V\right)$;
and $LL^{*}$ is selfadjoint with dense domain in $\mathscr{H}$.
Moreover, the following polar decomposition holds:
\begin{equation}
L=U\left(L^{*}L\right)^{1/2}=\left(LL^{*}\right)^{1/2}U
\end{equation}
where $U$ is a partial isometry $l^{2}\left(V\right)\rightarrow\mathscr{H}$. 
\end{cor}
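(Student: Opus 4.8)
The plan is to reduce the statement to the classical von Neumann structure theory for a closed, densely defined operator between Hilbert spaces: once $L$ is known to be closable with densely defined adjoint, the selfadjointness of $L^{*}L$ and $LL^{*}$ and the polar decomposition follow from standard theorems applied to the closure $\overline{L}=L^{**}$.

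First I would collect what the preceding corollary and Proposition~\ref{prop:ubc} already give: $L$ is densely defined on $\mathscr{K}_{0}=span\{\delta_{x}:x\in V\}\subset l^{2}\left(V\right)$, and $M\subset L^{*}$ with $dom\left(M\right)=span\{k_{x}:x\in V\}$, which is dense in $\mathscr{H}$ by the very definition of $\mathscr{H}=\mathscr{H}\left(k\right)$ as the completion of that span. Hence $L^{*}$ is densely defined, $L$ is closable, and $\overline{L}=L^{**}$ is a closed, densely defined operator $l^{2}\left(V\right)\rightarrow\mathscr{H}$ with $\left(\overline{L}\right)^{*}=L^{*}$ itself closed and densely defined $\mathscr{H}\rightarrow l^{2}\left(V\right)$. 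Throughout the remainder one reads $L^{*}L$ as $\left(\overline{L}\right)^{*}\overline{L}$ and $LL^{*}$ as $\overline{L}\left(\overline{L}\right)^{*}$ (a harmless abuse, since for unbounded $L$ the operator $L$ itself need not be closed).

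Next I would apply von Neumann's theorem twice: to $T=\overline{L}$, obtaining that $\left(\overline{L}\right)^{*}\overline{L}$ is selfadjoint and $\geq0$ on a dense domain in $l^{2}\left(V\right)$ (with $I+\left(\overline{L}\right)^{*}\overline{L}$ boundedly invertible and $dom\left(\left(\overline{L}\right)^{*}\overline{L}\right)$ a core for $\overline{L}$); and to the closed, densely defined operator $T=L^{*}$, obtaining that $\overline{L}\left(\overline{L}\right)^{*}=\left(L^{*}\right)^{*}L^{*}$ is selfadjoint and $\geq0$ on a dense domain in $\mathscr{H}$. Finally I would invoke the polar decomposition of $\overline{L}$: there is a partial isometry $U:l^{2}\left(V\right)\rightarrow\mathscr{H}$ with initial space $\left(\ker\overline{L}\right)^{\perp}=\overline{\mbox{ran}\left(\left(\overline{L}\right)^{*}\right)}$ and final space $\overline{\mbox{ran}\left(\overline{L}\right)}$ satisfying $\overline{L}=U\left(\left(\overline{L}\right)^{*}\overline{L}\right)^{1/2}$; conjugating by $U$ and using the standard polar-decomposition identity $U\left(\left(\overline{L}\right)^{*}\overline{L}\right)^{1/2}U^{*}=\left(\overline{L}\left(\overline{L}\right)^{*}\right)^{1/2}$ together with $U^{*}U\left(\left(\overline{L}\right)^{*}\overline{L}\right)^{1/2}=\left(\left(\overline{L}\right)^{*}\overline{L}\right)^{1/2}$ yields the companion form $\overline{L}=\left(\overline{L}\left(\overline{L}\right)^{*}\right)^{1/2}U$. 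Since $dom\left(\left(\left(\overline{L}\right)^{*}\overline{L}\right)^{1/2}\right)=dom\left(\overline{L}\right)\supset\mathscr{K}_{0}=dom\left(L\right)$ and $\overline{L}$ restricts to $L$ on $\mathscr{K}_{0}$, the displayed identity $L=U\left(L^{*}L\right)^{1/2}=\left(LL^{*}\right)^{1/2}U$ holds on $dom\left(L\right)$, and holds without restriction for the closure. There is no genuine obstacle here: the substantive work was already done in the earlier corollary in establishing that $L$ is closable with densely defined adjoint, and the present statement is then a direct application of the standard theory of closed operators; the only point requiring care is the bookkeeping that keeps $L$ and $\overline{L}$ apart inside the symbols $L^{*}L$ and $LL^{*}$.
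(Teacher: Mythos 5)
Your proposal is correct and follows exactly the route the paper intends: the preceding corollary establishes that $L$ is densely defined and closable with densely defined adjoint (via $M\subset L^{*}$), and the paper then states the present corollary without further proof, implicitly invoking von Neumann's theorem on $T^{*}T$ and the polar decomposition of the closure $\overline{L}$, which is precisely what you supply. Your remark that $L^{*}L$ and $LL^{*}$ must be read as $\bigl(\overline{L}\bigr)^{*}\overline{L}$ and $\overline{L}\bigl(\overline{L}\bigr)^{*}$ is a worthwhile clarification of the paper's notational shorthand.
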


\section{\label{sec:egs}Point-masses in concrete models}

Suppose $V\subset D\subset\mathbb{R}^{d}$ where $V$ is countable
and discrete, but $D$ is open. In this case, we get two kernels:
$k$ on $D\times D$, and $k_{V}:=k\big|_{V\times V}$ on $V\times V$
by restriction. If $x\in V$, then $k_{x}^{\left(V\right)}\left(\cdot\right)=k\left(\cdot,x\right)$
is a function on $V$, while $k_{x}\left(\cdot\right)=k\left(\cdot,x\right)$
is a function on $D$. 

This means that the corresponding RKHSs are different, $\mathscr{H}_{V}$
vs $\mathscr{H}$, where $\mathscr{H}_{V}=$ a RKHS of functions on
$V$, and $\mathscr{H}=$ a RKHS of functions on $D$. 
\begin{lem}
\label{lem:mc1}$\mathscr{H}_{V}$ is isometrically contained in $\mathscr{H}$
via $k_{x}^{\left(V\right)}\longmapsto k_{x}$, $x\in V$. \end{lem}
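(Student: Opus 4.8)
The plan is to build the embedding directly on the dense subspace spanned by the kernel sections and then pass to the completion. First I would define a linear map $T_{0}$ on $span\left\{ k_{x}^{\left(V\right)}:x\in V\right\} \subset\mathscr{H}_{V}$ by setting $T_{0}\bigl(k_{x}^{\left(V\right)}\bigr):=k_{x}\in\mathscr{H}$ and extending by linearity. The one computation that drives everything is that, for $x,y\in V$,
\[
\left\langle k_{x}^{\left(V\right)},k_{y}^{\left(V\right)}\right\rangle _{\mathscr{H}_{V}}=k_{V}\left(x,y\right)=k\left(x,y\right)=\left\langle k_{x},k_{y}\right\rangle _{\mathscr{H}},
\]
since $k_{V}$ is by definition the restriction of $k$ to $V\times V$. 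Consequently, for every finite set $F\in\mathscr{F}\left(V\right)$ and coefficients $\left\{ c_{x}\right\} _{x\in F}$,
\[
\Bigl\Vert \sum_{x\in F}c_{x}k_{x}^{\left(V\right)}\Bigr\Vert _{\mathscr{H}_{V}}^{2}=\underset{F\times F}{\sum\sum}\overline{c_{x}}c_{y}\,k\left(x,y\right)=\Bigl\Vert \sum_{x\in F}c_{x}k_{x}\Bigr\Vert _{\mathscr{H}}^{2}.
\]
This identity does double duty: it shows that $T_{0}$ is well defined (a finite combination of the $k_{x}^{\left(V\right)}$ that is a null vector in $\mathscr{H}_{V}$ is sent to a vector of $\mathscr{H}$-norm zero, hence to $0$ after the quotient in Definition \ref{def:d1}), and it shows that $T_{0}$ is norm-preserving on its domain.

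Next I would invoke the standard fact that a densely defined isometry between pre-Hilbert spaces extends uniquely to an isometry of the Hilbert completions. Since $span\left\{ k_{x}^{\left(V\right)}:x\in V\right\} $ is dense in $\mathscr{H}_{V}$ by construction (Definition \ref{def:d1}), $T_{0}$ extends to an isometry $T:\mathscr{H}_{V}\rightarrow\mathscr{H}$ whose range is the closed span of $\left\{ k_{x}:x\in V\right\} $ in $\mathscr{H}$. Thus $\mathscr{H}_{V}\cong T\left(\mathscr{H}_{V}\right)\subseteq\mathscr{H}$ isometrically, which is the assertion.

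To make precise that this is a \emph{containment} of function spaces and not merely an abstract isometric embedding, I would check that $T$ is compatible with point evaluations: for $f\in\mathscr{H}_{V}$ and $x\in V$, using the reproducing property in both spaces,
\[
\left(Tf\right)\left(x\right)=\left\langle k_{x},Tf\right\rangle _{\mathscr{H}}=\left\langle Tk_{x}^{\left(V\right)},Tf\right\rangle _{\mathscr{H}}=\left\langle k_{x}^{\left(V\right)},f\right\rangle _{\mathscr{H}_{V}}=f\left(x\right),
\]
so $Tf$ is a function on $D$ whose restriction to $V$ recovers $f$; in particular $T$ is injective. The only point needing a moment's care is the passage to completions in the possible presence of null vectors, but this is automatically absorbed by the quotient in Definition \ref{def:d1}, so there is no genuine obstacle here — the content of the lemma is entirely the inner-product identity displayed above.
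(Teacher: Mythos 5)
Your proposal is correct and follows essentially the same route as the paper: the whole content is the identity $\bigl\Vert \sum_{x\in F}c_{x}k_{x}^{\left(V\right)}\bigr\Vert _{\mathscr{H}_{V}}=\bigl\Vert \sum_{x\in F}c_{x}k_{x}\bigr\Vert _{\mathscr{H}}$ on finite linear combinations, followed by passage to the Hilbert space completion. Your additional checks (well-definedness modulo null vectors and compatibility with point evaluation) are correct refinements of details the paper leaves implicit.
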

\begin{proof}
If $F\subset V$ is a finite subset, and $\xi=\xi_{F}$ is a function
on $F$, then 
\[
\left\Vert \sum\nolimits _{x\in F}\xi\left(x\right)k_{x}^{\left(V\right)}\right\Vert _{\mathscr{H}_{V}}=\left\Vert \sum\nolimits _{x\in F}\xi\left(x\right)k_{x}\right\Vert _{\mathscr{H}}.
\]
The desired result follows from this. 
\end{proof}
We are concerned with cases of kernels $k:D\times D\rightarrow\mathbb{R}$
with restriction $k_{V}:V\times V\rightarrow\mathbb{R}$, where $V$
is a countable discrete subset of $D$. Typically, for $x\in V$,
we may have (restriction) $\delta_{x}\big|_{V}\in\mathscr{H}_{V}$,
but $\delta_{x}\notin\mathscr{H}$; indeed this happens for the kernel
$k$ of standard Brownian motion: 

$D=\mathbb{R}_{+}$;

$V=$ an ordered subset $0<x_{1}<x_{2}<\cdots<x_{i}<x_{i+1}<\cdots$,
$V=\left\{ x_{i}\right\} _{i=1}^{\infty}$. 

In this case, we compute $\mathscr{H}_{V}$, and we show that $\delta_{x_{i}}\big|_{V}\in\mathscr{H}_{V}$;
while for $\mathscr{H}_{m}=$ the Cameron-Martin Hilbert space, we
have $\delta_{x_{i}}\notin\mathscr{H}_{m}$. 

Also note that $\delta_{x_{1}}$ has a different meaning with reference
to $\mathscr{H}_{V}$ vs $\mathscr{H}_{m}$. In the first case, it
is simply $\delta_{x_{1}}\left(y\right)=\begin{cases}
1 & y=x_{1}\\
0 & y\in V\backslash\left\{ x_{1}\right\} 
\end{cases}$. In the second case, $\delta_{x_{1}}$ is a Schwartz distribution.
We shall abuse notation, writing $\delta_{x}$ in both cases. 

In the following, we will consider restriction to $V\times V$ of
a special continuous p.d. kernel $k$ on $\mathbb{R}_{+}\times\mathbb{R}_{+}$.
It is $k\left(s,t\right)=s\wedge t=\min\left(s,t\right)$. Before
we restrict, note that the RKHS of this $k$ is the Cameron-Martin
Hilbert space of function $f$ on $\mathbb{R}_{+}$ with distribution
derivative $f'\in L^{2}\left(\mathbb{R}_{+}\right)$, and 
\begin{equation}
\left\Vert f\right\Vert _{\mathscr{H}}^{2}:=\int_{0}^{\infty}\left|f'\left(t\right)\right|^{2}dt<\infty.\label{eq:cm1}
\end{equation}
For details, see below. 

\textbf{Application.} The Hilbert space given by $\left\Vert \cdot\right\Vert _{\mathscr{H}}^{2}$
in (\ref{eq:cm1}) is called the Cameron-Martin Hilbert space, and,
as noted, it is the RKHS of $k:\mathbb{R}_{+}\times\mathbb{R}_{+}\rightarrow\mathbb{R}:$
$k\left(s,t\right):=s\wedge t$. Now pick a discrete subset $V\subset\mathbb{R}_{+}$;
then Lemma \ref{lem:mc1} states that the RKHS of the $V\times V$
restricted kernel, $k^{\left(V\right)}$ is isometrically embedded
into $\mathscr{H}$, i.e., setting 
\begin{equation}
J^{\left(V\right)}\left(k_{x}^{\left(V\right)}\right)=k_{x},\quad\forall x\in V;\label{eq:cm2}
\end{equation}
$J^{\left(V\right)}$ extends by ``closed span'' to an isometry
$\mathscr{H}_{V}\xrightarrow{J^{\left(V\right)}}\mathscr{H}$. It
further follows from the lemma, that the range of $J^{\left(V\right)}$
may have infinite co-dimension.

Note that $P_{V}:=J^{\left(V\right)}\left(J^{\left(V\right)}\right)^{*}$
is the projection onto the range of $J^{\left(V\right)}$. The ortho-complement
is as follow: 
\begin{equation}
\mathscr{H}\ominus\mathscr{H}_{V}=\left\{ \psi\in\mathscr{H}\:\big|\:\psi\left(x\right)=0,\;\forall x\in V\right\} .\label{eq:cm3}
\end{equation}

\begin{example}
Let $k$ and $k^{\left(V\right)}$ be as in (\ref{eq:cm2}), and set
$V:=\pi\mathbb{Z}_{+}$, i.e., integer multiples of $\pi$. Then easy
generators of wavelet functions (see e.g., \cite{BJ02}) yield non-zero
functions $\psi$ on $\mathbb{R}_{+}$ such that 
\begin{equation}
\psi\in\mathscr{H}\ominus\mathscr{H}_{V}.\label{eq:cm4}
\end{equation}
More precisely, 
\begin{equation}
0<\int_{0}^{\infty}\left|\psi'\left(t\right)\right|^{2}dt<\infty,\label{eq:cm5}
\end{equation}
where $\psi'$ is the distribution (weak) derivative; and 
\begin{equation}
\psi\left(n\pi\right)=0,\quad\forall n\in\mathbb{Z}_{+}.\label{eq:cm6}
\end{equation}
An explicit solution to (\ref{eq:cm4})-(\ref{eq:cm6}) is 
\begin{equation}
\psi\left(t\right)=\prod_{n=1}^{\infty}\cos\left(\frac{t}{2^{n}}\right)=\frac{\sin t}{t},\quad\forall t\in\mathbb{R}.\label{eq:cm7}
\end{equation}
From this, one easily generates an infinite-dimensional set of solutions. 
\end{example}

\subsection{\label{sub:bm}Brownian motion}

Consider the covariance function of standard Brownian motion $B_{t}$,
$t\in[0,\infty)$, i.e., a Gaussian process $\left\{ B_{t}\right\} $
with mean zero and covariance function 
\begin{equation}
\mathbb{E}\left(B_{s}B_{t}\right)=s\wedge t=\min\left(s,t\right).\label{eq:bm1}
\end{equation}
We now show that the restriction of (\ref{eq:bm1}) to $V\times V$
for an ordered subset (we fix such a set $V$):
\begin{equation}
V:\;0<x_{1}<x_{2}<\cdots<x_{i}<x_{i+1}<\cdots\label{eq:bm2}
\end{equation}
has the discrete mass property (Def. \ref{def:dmp}). 

Set $\mathscr{H}_{V}=RKHS(k\big|_{V\times V})$, 
\begin{equation}
k_{V}\left(x_{i},x_{j}\right)=x_{i}\wedge x_{j}.\label{eq:bm3}
\end{equation}
We consider the set $F_{n}=\left\{ x_{1},x_{2},\ldots,x_{n}\right\} $
of finite subsets of $V$, and 
\begin{equation}
K_{n}=k^{\left(F_{n}\right)}=\begin{bmatrix}x_{1} & x_{1} & x_{1} & \cdots & x_{1}\\
x_{1} & x_{2} & x_{2} & \cdots & x_{2}\\
x_{1} & x_{2} & x_{3} & \cdots & x_{3}\\
\vdots & \vdots & \vdots & \vdots & \vdots\\
x_{1} & x_{2} & x_{3} & \cdots & x_{n}
\end{bmatrix}=\left(x_{i}\wedge x_{j}\right)_{i,j=1}^{n}.\label{eq:bm4}
\end{equation}
We will show that condition \ref{enu:d3} in Theorem \ref{thm:del}
holds for $k_{V}$. For this, we must compute all the determinants,
$D_{n}=\det\left(K_{F}\right)$ etc. ($n=\#F$), see Corollary \ref{cor:sp}.
\begin{lem}
~ 
\begin{equation}
D_{n}=\det\left(\left(x_{i}\wedge x_{j}\right)_{i,j=1}^{n}\right)=x_{1}\left(x_{2}-x_{1}\right)\left(x_{3}-x_{2}\right)\cdots\left(x_{n}-x_{n-1}\right).\label{eq:bm5}
\end{equation}
\end{lem}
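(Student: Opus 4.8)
The plan is to compute the determinant $D_n=\det\left(\left(x_i\wedge x_j\right)_{i,j=1}^n\right)$ by a sequence of elementary row operations that triangularize the matrix, after which the answer can be read off the diagonal. The key observation is that, because the subset $V$ is ordered as in \eqref{eq:bm2}, the $i$-th row of $K_n$ is
\[
\left(x_1,x_2,\ldots,x_{i-1},x_i,x_i,\ldots,x_i\right),
\]
that is, the first $i$ entries agree with the pattern $x_1<x_2<\cdots<x_i$ and all subsequent entries are constant equal to $x_i$. Consequently, for $i\ge 2$, the $i$-th row minus the $(i-1)$-th row equals $\left(0,0,\ldots,0,\,x_i-x_{i-1},\,x_i-x_{i-1},\ldots,x_i-x_{i-1}\right)$, with the first nonzero entry in column $i$.

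First I would perform these $n-1$ subtractions (row $i \mapsto$ row $i$ minus row $i-1$, processed from $i=n$ down to $i=2$, so that each subtraction uses an original row); determinant is unchanged. The resulting matrix is upper triangular: its first row is $\left(x_1,x_1,\ldots,x_1\right)$ with diagonal entry $x_1$, and for $i\ge 2$ its $i$-th row has diagonal entry $x_i-x_{i-1}$ and zeros below the diagonal. Hence
\[
D_n=x_1\left(x_2-x_1\right)\left(x_3-x_2\right)\cdots\left(x_n-x_{n-1}\right),
\]
which is \eqref{eq:bm5}. Alternatively, one can argue by induction on $n$ via cofactor expansion along the last row or last column, using the fact that the last column of $K_n$ is $\left(x_1,x_2,\ldots,x_{n-1},x_n\right)^{T}$ and that subtracting the $(n-1)$-st column from the $n$-th column produces a column with a single nonzero entry $x_n-x_{n-1}$ in the last position; cofactor expansion along that column then gives $D_n=(x_n-x_{n-1})D_{n-1}$, and the base case $D_1=x_1$ closes the induction.

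The main point requiring care — though it is not deep — is the bookkeeping in the row-reduction step: one must subtract consecutive rows in the correct order (from bottom to top) so as not to corrupt a row that is still needed, and one must verify that after the subtractions the matrix is genuinely upper triangular, i.e. that for $i\ge 2$ the entries of the new $i$-th row in columns $1,\ldots,i-1$ all vanish. This is immediate from the description of the rows of $K_n$ above, since in columns $j<i$ both row $i$ and row $i-1$ carry the same value $x_j$. Everything else is a routine product of the diagonal entries of a triangular matrix, so there is no genuine obstacle.
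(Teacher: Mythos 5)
Your proof is correct and takes essentially the same route as the paper, which simply asserts (by induction) that $K_{n}$ is equivalent to the diagonal matrix $\mathrm{diag}\left(x_{1},x_{2}-x_{1},\ldots,x_{n}-x_{n-1}\right)$ and reads off the determinant. If anything, your explicit bottom-to-top row reduction (and the cofactor recursion $D_{n}=\left(x_{n}-x_{n-1}\right)D_{n-1}$) is more carefully justified than the paper's one-line appeal to ``unitary equivalence,'' which is really a determinant-preserving row/column (congruence) equivalence rather than a unitary one.
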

\begin{proof}
Induction. In fact, 
\[
\begin{bmatrix}x_{1} & x_{1} & x_{1} & \cdots & x_{1}\\
x_{1} & x_{2} & x_{2} & \cdots & x_{2}\\
x_{1} & x_{2} & x_{3} & \cdots & x_{3}\\
\vdots & \vdots & \vdots & \vdots & \vdots\\
x_{1} & x_{2} & x_{3} & \cdots & x_{n}
\end{bmatrix}\sim\begin{bmatrix}x_{1} & 0 & 0 & \cdots & 0\\
0 & x_{2}-x_{1} & 0 & \cdots & 0\\
0 & 0 & x_{3}-x_{2} & \cdots & 0\\
\vdots & \vdots & \vdots & \ddots & \vdots\\
0 & \cdots & 0 & \cdots & x_{n}-x_{n-1}
\end{bmatrix},
\]
unitary equivalence in finite dimensions.
\end{proof}

\begin{lem}
Let 
\begin{equation}
\zeta_{\left(n\right)}:=K_{n}^{-1}\left(\delta_{x_{1}}\right)\left(\cdot\right)\label{eq:bm7}
\end{equation}
be as in eq. (\ref{eq:pd8}), so that 
\begin{equation}
\left\Vert P_{F_{n}}\left(\delta_{x_{1}}\right)\right\Vert _{\mathscr{H}_{V}}^{2}=\zeta_{\left(n\right)}\left(x_{1}\right).\label{eq:bm8}
\end{equation}
Then, 
\begin{eqnarray*}
\zeta_{\left(1\right)}\left(x_{1}\right) & = & \frac{1}{x_{1}}\\
\zeta_{\left(n\right)}\left(x_{1}\right) & = & \frac{x_{2}}{x_{1}\left(x_{2}-x_{1}\right)},\quad\text{for}\; n=2,3,\ldots,
\end{eqnarray*}
and 
\[
\left\Vert \delta_{x_{1}}\right\Vert _{\mathscr{H}_{V}}^{2}=\frac{x_{2}}{x_{1}\left(x_{2}-x_{1}\right)}.
\]
\end{lem}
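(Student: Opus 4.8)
The plan is to reduce everything to the determinant formula of the preceding lemma together with two facts already in hand: the Cramer's-rule identity $\zeta^{\left(F_{n}\right)}\left(x_{1}\right)=D'_{n-1}/D_{n}$ recorded in (\ref{eq:p2}) (equivalently $\zeta_{\left(n\right)}\left(x_{1}\right)=\left(K_{F_{n}}^{-1}\delta_{x_{1}}\right)\left(x_{1}\right)$), and the monotone-limit statement (\ref{eq:mono2}), namely $\lim_{F\nearrow V}\left(K_{F}^{-1}\delta_{x_{1}}\right)\left(x_{1}\right)=\left\Vert \delta_{x_{1}}\right\Vert _{\mathscr{H}_{V}}^{2}$.

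First I would dispose of $n=1$: here $K_{1}=\left[x_{1}\right]$, so $K_{1}^{-1}=\left[1/x_{1}\right]$ and $\zeta_{\left(1\right)}\left(x_{1}\right)=1/x_{1}$. For $n\geq2$, by (\ref{eq:p2}) we have $\zeta_{\left(n\right)}\left(x_{1}\right)=D'_{n-1}/D_{n}$, where $D_{n}=\det K_{F_{n}}$ and $D'_{n-1}$ is the $\left(1,1\right)$-minor of $K_{F_{n}}$, that is $\det\left(\left(x_{i}\wedge x_{j}\right)_{i,j=2}^{n}\right)$. The previous lemma gives
\[
D_{n}=x_{1}\left(x_{2}-x_{1}\right)\left(x_{3}-x_{2}\right)\cdots\left(x_{n}-x_{n-1}\right).
\]
The key observation is that $D'_{n-1}$ is itself the Brownian-covariance determinant of the shifted ordered set $x_{2}<x_{3}<\cdots<x_{n}$, so the same lemma applies verbatim and yields $D'_{n-1}=x_{2}\left(x_{3}-x_{2}\right)\cdots\left(x_{n}-x_{n-1}\right)$. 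Dividing, the common factors $\left(x_{3}-x_{2}\right),\ldots,\left(x_{n}-x_{n-1}\right)$ cancel, leaving
\[
\zeta_{\left(n\right)}\left(x_{1}\right)=\frac{x_{2}}{x_{1}\left(x_{2}-x_{1}\right)},\qquad n=2,3,\ldots,
\]
independent of $n$.

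Finally, to identify the norm, I invoke (\ref{eq:mono2}) with the exhaustion $F_{n}\nearrow V$: since $V=\bigcup_{n}F_{n}$, the sets $F_{n}$ are cofinal in $\mathscr{F}\left(V\right)$, and since the (monotone) sequence $\zeta_{\left(n\right)}\left(x_{1}\right)$ stabilizes at $x_{2}/\left(x_{1}\left(x_{2}-x_{1}\right)\right)$ from $n=2$ on, its limit — hence $\left\Vert \delta_{x_{1}}\right\Vert _{\mathscr{H}_{V}}^{2}$ — equals that value. In particular $\delta_{x_{1}}\in\mathscr{H}_{V}$; indeed the stabilization already shows $\delta_{x_{1}}\in\mathscr{H}_{F_{2}}$, and one can alternatively run the $\#F=2$ computation from the Example above (with $k_{11}=k_{12}=k_{21}=x_{1}$, $k_{22}=x_{2}$, so $D=x_{1}\left(x_{2}-x_{1}\right)$) to get $\delta_{x_{1}}=\frac{x_{2}}{x_{1}\left(x_{2}-x_{1}\right)}k_{x_{1}}-\frac{1}{x_{2}-x_{1}}k_{x_{2}}$ directly, which gives the same constant via (\ref{eq:p4}).

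The only step requiring care — the ``main obstacle,'' such as it is — is the self-similarity observation that the $\left(1,1\right)$-minor is again a matrix of the same type for a shorter ordered tuple, which is what lets the induction lemma be reused; after that, and after checking cofinality of $\{F_n\}$ so that (\ref{eq:mono2}) applies, the rest is a routine cancellation of determinants.
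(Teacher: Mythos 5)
Your proposal is correct and follows essentially the same route as the paper: compute $D_{n}$ and the $(1,1)$-minor $D'_{n-1}$ by reusing the determinant lemma on the shifted ordered tuple $x_{2}<\cdots<x_{n}$, cancel the common factors in the ratio $D'_{n-1}/D_{n}$ to see that $\zeta_{(n)}(x_{1})$ stabilizes at $x_{2}/\bigl(x_{1}(x_{2}-x_{1})\bigr)$, and identify the norm via the monotone supremum (the paper cites Corollary \ref{cor:proj1}, which is the same statement as your use of (\ref{eq:mono2}) plus cofinality of the $F_{n}$). Your explicit remark about cofinality and the alternative $\#F=2$ computation are harmless additions that the paper leaves implicit.
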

\begin{proof}
A direct computation shows the $\left(1,1\right)$ minor of the matrix
$K_{n}^{-1}$ is
\begin{equation}
D'_{n-1}=\det\left(\left(x_{i}\wedge x_{j}\right)_{i,j=2}^{n}\right)=x_{2}\left(x_{3}-x_{2}\right)\left(x_{4}-x_{3}\right)\cdots\left(x_{n}-x_{n-1}\right)\label{eq:bm6}
\end{equation}
and so 
\begin{eqnarray*}
\zeta_{\left(1\right)}\left(x_{1}\right) & = & \frac{1}{x_{1}},\quad\mbox{and}\\
\zeta_{\left(2\right)}\left(x_{1}\right) & = & \frac{x_{2}}{x_{1}\left(x_{2}-x_{1}\right)}\\
\zeta_{\left(3\right)}\left(x_{1}\right) & = & \frac{x_{2}\left(x_{3}-x_{2}\right)}{x_{1}\left(x_{2}-x_{1}\right)\left(x_{3}-x_{2}\right)}=\frac{x_{2}}{x_{1}\left(x_{2}-x_{1}\right)}\\
\zeta_{\left(4\right)}\left(x_{1}\right) & = & \frac{x_{2}\left(x_{3}-x_{2}\right)\left(x_{4}-x_{3}\right)}{x_{1}\left(x_{2}-x_{1}\right)\left(x_{3}-x_{2}\right)\left(x_{4}-x_{3}\right)}=\frac{x_{2}}{x_{1}\left(x_{2}-x_{1}\right)}\\
 & \vdots
\end{eqnarray*}
The result follows from this, and from Corollary \ref{cor:proj1}.\end{proof}
\begin{cor}
\label{cor:proj}$P_{F_{n}}\left(\delta_{x_{1}}\right)=P_{F_{2}}\left(\delta_{x_{1}}\right)$,
$\forall n\geq2$. Therefore, 
\begin{equation}
\delta_{x_{1}}\in\mathscr{H}_{V}^{\left(F_{2}\right)}:=span\{k_{x_{1}}^{\left(V\right)},k_{x_{2}}^{\left(V\right)}\}
\end{equation}
and
\begin{equation}
\delta_{x_{1}}=\zeta_{\left(2\right)}\left(x_{1}\right)k_{x_{1}}^{\left(V\right)}+\zeta_{\left(2\right)}\left(x_{2}\right)k_{x_{2}}^{\left(V\right)}
\end{equation}
where 
\[
\zeta_{\left(2\right)}\left(x_{i}\right)=K_{2}^{-1}\left(\delta_{x_{1}}\right)\left(x_{i}\right),\; i=1,2.
\]
Specifically, 
\begin{eqnarray}
\zeta_{\left(2\right)}\left(x_{1}\right) & = & \frac{x_{2}}{x_{1}\left(x_{2}-x_{1}\right)}\\
\zeta_{\left(2\right)}\left(x_{2}\right) & = & \frac{-1}{x_{2}-x_{1}};
\end{eqnarray}
and 
\begin{equation}
\left\Vert \delta_{x_{1}}\right\Vert _{\mathscr{H}_{V}}^{2}=\frac{x_{2}}{x_{1}\left(x_{2}-x_{1}\right)}.\label{eq:dn}
\end{equation}
\end{cor}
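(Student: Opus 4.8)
The proof rests on the explicit values of $\zeta_{(n)}(x_1)$ from the previous Lemma, and the plan has three steps: (1) observe that $\left\Vert P_{F_n}\delta_{x_1}\right\Vert_{\mathscr{H}_V}^2=\zeta_{(n)}(x_1)=\frac{x_2}{x_1(x_2-x_1)}$ is already constant for $n\ge 2$, and upgrade this numerical stabilization to the operator identity $P_{F_n}\delta_{x_1}=P_{F_2}\delta_{x_1}$; (2) pass to the limit to conclude $\delta_{x_1}=P_{F_2}\delta_{x_1}\in span\{k_{x_1}^{\left(V\right)},k_{x_2}^{\left(V\right)}\}$; (3) read off the coefficients and the norm from the $2\times 2$ linear system.

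For step (1): since $F_2\subset F_n$ we have $\mathscr{H}_{F_2}\subset\mathscr{H}_{F_n}$, hence $P_{F_2}P_{F_n}=P_{F_2}$, and the decomposition $P_{F_n}\delta_{x_1}=P_{F_2}\delta_{x_1}+\big(P_{F_n}-P_{F_2}\big)\delta_{x_1}$ is orthogonal, the second summand lying in $\mathscr{H}_{F_n}\ominus\mathscr{H}_{F_2}$. Pythagoras gives $\left\Vert P_{F_n}\delta_{x_1}\right\Vert^2=\left\Vert P_{F_2}\delta_{x_1}\right\Vert^2+\left\Vert(P_{F_n}-P_{F_2})\delta_{x_1}\right\Vert^2$; since the two outer terms coincide for $n\ge 2$ by the Lemma, the middle term vanishes, so $P_{F_n}\delta_{x_1}=P_{F_2}\delta_{x_1}$ for all $n\ge 2$. (Equivalently, one can invoke the monotone-stabilization criterion recorded just after Corollary~\ref{cor:proj1}.) For step (2): Theorem~\ref{thm:del}, part~\ref{enu:d3}, already gives $\delta_{x_1}\in\mathscr{H}_V$ since $\sup_n\zeta_{(n)}(x_1)<\infty$; and because $F_n\nearrow V$, the spaces $\mathscr{H}_{F_n}$ have dense union in $\mathscr{H}_V$, so $P_{F_n}\to I$ strongly and $\delta_{x_1}=\lim_n P_{F_n}\delta_{x_1}=P_{F_2}\delta_{x_1}\in\mathscr{H}_V^{\left(F_2\right)}$.

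For step (3): the system~(\ref{eq:p1}) for $F=F_2$ reads $\begin{bmatrix}x_1&x_1\\x_1&x_2\end{bmatrix}\begin{bmatrix}\zeta_{(2)}(x_1)\\\zeta_{(2)}(x_2)\end{bmatrix}=\begin{bmatrix}1\\0\end{bmatrix}$ with $D_2=x_1(x_2-x_1)$, and Cramer's rule (cf. the $\#F=2$ Example) yields $\zeta_{(2)}(x_1)=x_2/D_2=\frac{x_2}{x_1(x_2-x_1)}$ and $\zeta_{(2)}(x_2)=-x_1/D_2=\frac{-1}{x_2-x_1}$; finally $\left\Vert\delta_{x_1}\right\Vert_{\mathscr{H}_V}^2=\zeta_{(2)}(x_1)$ by~(\ref{eq:p4}). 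The only step needing genuine care is the strong convergence $P_{F_n}\delta_{x_1}\to\delta_{x_1}$ in $\mathscr{H}_V$-norm (not merely weakly) in step (2), which is exactly where one uses that the ordered set $V=\{x_i\}$ is exhausted by the $F_n$; the rest is bookkeeping with the determinant formula~(\ref{eq:bm5}). As a consistency check, one may directly verify that $h:=\zeta_{(2)}(x_1)k_{x_1}^{\left(V\right)}+\zeta_{(2)}(x_2)k_{x_2}^{\left(V\right)}$ satisfies $h(x_1)=1$ and $h(x_j)=0$ for $j\ge 2$, using $k^{\left(V\right)}(x_i,x_j)=x_i\wedge x_j$, thereby re-deriving $h=\delta_{x_1}$.
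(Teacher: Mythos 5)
Your proof is correct and follows essentially the same route as the paper: the paper's own (very terse) argument also deduces the corollary from the stabilization $\zeta_{(n)}(x_1)=\zeta_{(2)}(x_1)$ for $n\geq 2$ together with the monotonicity $\left\Vert P_{F_n}\delta_{x_1}\right\Vert_{\mathscr{H}}^{2}=\zeta_{(n)}(x_1)$, and you have merely supplied the details it leaves implicit (the Pythagoras step upgrading norm-stabilization to $P_{F_n}\delta_{x_1}=P_{F_2}\delta_{x_1}$, and the strong convergence $P_{F_n}\to I$). Your closing "consistency check" — verifying pointwise that $\zeta_{(2)}(x_1)k_{x_1}^{(V)}+\zeta_{(2)}(x_2)k_{x_2}^{(V)}$ equals $\delta_{x_1}$ on $V$ — is in fact a complete, self-contained alternative proof, but since you present it only as a check, the main argument stands as the paper's.
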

\begin{proof}
Follows from the lemma. Note that 
\[
\zeta_{n}\left(x_{1}\right)=\left\Vert P_{F_{n}}\left(\delta_{x_{1}}\right)\right\Vert _{\mathscr{H}}^{2}
\]
and $\zeta_{\left(1\right)}\left(x_{1}\right)\leq\zeta_{\left(2\right)}\left(x_{1}\right)\leq\cdots$,
since $F_{n}=\left\{ x_{1},x_{2},\ldots,x_{n}\right\} $. In particular,
$\frac{1}{x_{1}}\leq\frac{x_{2}}{x_{1}\left(x_{2}-x_{1}\right)}$,
which yields (\ref{eq:dn}). \end{proof}
\begin{rem}
We showed that $\delta_{x_{1}}\in\mathscr{H}_{V}$, $V=\left\{ x_{1}<x_{2}<\cdots\right\} \subset\mathbb{R}_{+}$,
with the restriction of $s\wedge t$ = the covariance kernel of Brownian
motion. 

The same argument also shows that $\delta_{x_{i}}\in\mathscr{H}_{V}$
when $i>1$. We only need to modify the index notation from the case
of the proof for $\delta_{x_{1}}\in\mathscr{H}_{V}$. The details
are sketched below.

Fix $V=\left\{ x_{i}\right\} _{i=1}^{\infty}$, $x_{1}<x_{2}<\cdots$,
then 
\[
P_{F_{n}}\left(\delta_{x_{i}}\right)=\begin{cases}
0 & \text{if \ensuremath{n<i-1}}\\
\sum_{s=1}^{n}\left(K_{F_{n}}^{-1}\delta_{x_{i}}\right)\left(x_{s}\right)k_{x_{s}} & \text{if \ensuremath{n\geq i}}
\end{cases}
\]
and 
\[
\left\Vert P_{F_{n}}\left(\delta_{x_{i}}\right)\right\Vert _{\mathscr{H}}^{2}=\begin{cases}
0 & \text{if \ensuremath{n<i-1}}\\
\frac{1}{x_{i}-x_{i-1}} & \text{if \ensuremath{n=i}}\\
\frac{x_{i+1}-x_{i-1}}{\left(x_{i}-x_{i-1}\right)\left(x_{i+1}-x_{i}\right)} & \text{if \ensuremath{n>i}}
\end{cases}
\]
\textbf{Conclusion.} 
\begin{eqnarray}
\delta_{x_{i}} & \in & span\left\{ k_{x_{i-1}}^{\left(V\right)},k_{x_{i}}^{\left(V\right)},k_{x_{i+1}}^{\left(V\right)}\right\} ,\quad\mbox{and}\\
\left\Vert \delta_{x_{i}}\right\Vert _{\mathscr{H}}^{2} & = & \frac{x_{i+1}-x_{i-1}}{\left(x_{i}-x_{i-1}\right)\left(x_{i+1}-x_{i}\right)}.
\end{eqnarray}
\end{rem}
\begin{cor}
Let $V\subset\mathbb{R}_{+}$ be countable. If $x_{a}\in V$ is an
accumulation point (from $V$), then $\left\Vert \delta_{a}\right\Vert _{\mathscr{H}_{V}}=\infty$. \end{cor}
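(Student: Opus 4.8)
The plan is to apply the equivalence \ref{enu:d1}$\Leftrightarrow$\ref{enu:d3} of Theorem \ref{thm:del} in contrapositive form, together with the monotone-limit formula (\ref{eq:mono2}). Thus it suffices to exhibit a sequence $F_{n}\in\mathscr{F}_{x_{a}}(V)$ for which $\bigl(K_{F_{n}}^{-1}\delta_{x_{a}}\bigr)(x_{a})\to\infty$: since $\bigl(K_{F}^{-1}\delta_{x_{a}}\bigr)(x_{a})=\|P_{F}\delta_{x_{a}}\|_{\mathscr{H}_{V}}^{2}$ (Lemma \ref{lem:proj2}) is monotone in $F$ by (\ref{eq:mono1}), such a sequence forces $\sup_{F\in\mathscr{F}(V)}\bigl(K_{F}^{-1}\delta_{x_{a}}\bigr)(x_{a})=+\infty$, whence $\delta_{x_{a}}\notin\mathscr{H}_{V}$ by Theorem \ref{thm:del} and $\|\delta_{x_{a}}\|_{\mathscr{H}_{V}}^{2}=\lim_{F\nearrow V}\bigl(K_{F}^{-1}\delta_{x_{a}}\bigr)(x_{a})=+\infty$ by (\ref{eq:mono2}). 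Since $V\subset\mathbb{R}_{+}$ we have $x_{a}>0$.

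Because $x_{a}$ is an accumulation point of $V$, there is a sequence $(v_{n})$ in $V\setminus\{x_{a}\}$ with $v_{n}\to x_{a}$, and after passing to a subsequence we may assume that either $v_{n}>x_{a}$ for every $n$, or $v_{n}<x_{a}$ for every $n$. I take the two-point test sets $F_{n}:=\{x_{a},v_{n}\}\in\mathscr{F}_{x_{a}}(V)$ and evaluate $\bigl(K_{F_{n}}^{-1}\delta_{x_{a}}\bigr)(x_{a})$ using the $\#F=2$ computation recorded in the Example following (\ref{eq:p1}): writing $p:=\min(x_{a},v_{n})$ and $q:=\max(x_{a},v_{n})$, one has $\det K_{F_{n}}=pq-p^{2}=p(q-p)$, so the $(x_{a},x_{a})$ entry of $K_{F_{n}}^{-1}$ equals $q/\bigl(p(q-p)\bigr)$ when $x_{a}=p$ (the case $v_{n}>x_{a}$) and equals $p/\bigl(p(q-p)\bigr)=1/(q-p)$ when $x_{a}=q$ (the case $v_{n}<x_{a}$). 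In the first case $\bigl(K_{F_{n}}^{-1}\delta_{x_{a}}\bigr)(x_{a})=v_{n}/\bigl(x_{a}(v_{n}-x_{a})\bigr)$, and since $v_{n}\to x_{a}>0$ while $v_{n}-x_{a}\to0$ through positive values, this tends to $+\infty$; in the second case $\bigl(K_{F_{n}}^{-1}\delta_{x_{a}}\bigr)(x_{a})=1/(x_{a}-v_{n})\to+\infty$. In either case $\sup_{F\in\mathscr{F}(V)}\bigl(K_{F}^{-1}\delta_{x_{a}}\bigr)(x_{a})=+\infty$, and the first paragraph yields $\|\delta_{x_{a}}\|_{\mathscr{H}_{V}}=\infty$, which is the assertion.

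I expect no substantive obstacle here: the content is the observation that the explicit value $\|\delta_{x_{i}}\|_{\mathscr{H}_{V}}^{2}=(x_{i+1}-x_{i-1})/\bigl((x_{i}-x_{i-1})(x_{i+1}-x_{i})\bigr)$ from the preceding remark already diverges when an adjacent sample coalesces with $x_{i}$, and that monotonicity of $F\mapsto\|P_{F}\delta_{x_{a}}\|_{\mathscr{H}_{V}}^{2}$ reduces the verification to two-point subsets. The only points demanding a little care are that $x_{a}$ may be the smaller or the larger element of a two-point set, so the two cases invoke the two mirror-image diagonal entries of the $2\times2$ inverse Gram matrix, and the harmless normalization $x_{a}>0$ coming from $V\subset\mathbb{R}_{+}$.
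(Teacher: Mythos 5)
Your argument is correct, and it is exactly the reasoning the paper intends: the corollary is stated without proof immediately after the explicit formula $\left\Vert \delta_{x_{i}}\right\Vert _{\mathscr{H}_{V}}^{2}=\frac{x_{i+1}-x_{i-1}}{\left(x_{i}-x_{i-1}\right)\left(x_{i+1}-x_{i}\right)}$, and your two-point test sets $F_{n}=\{x_{a},v_{n}\}$ combined with condition \ref{enu:d3} of Theorem \ref{thm:del} and the monotonicity of $F\mapsto\left(K_{F}^{-1}\delta_{x_{a}}\right)\left(x_{a}\right)$ make that divergence rigorous even when $V$ is not locally order-isomorphic to $\mathbb{Z}$ near $x_{a}$ (so that no ``nearest neighbors'' exist). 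The only hypothesis you lean on is $x_{a}>0$, which is consistent with the paper's standing convention $0<x_{1}<x_{2}<\cdots$ for sample points.
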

\begin{rem}
This computation will be revisited in sect. \ref{sec:net}, in a much
wider context. \end{rem}
\begin{example}
An illustration for $0<x_{1}<x_{2}<x_{3}<x_{4}$: 
\begin{eqnarray*}
P_{F}\left(\delta_{x_{3}}\right) & = & \sum_{y\in F}\zeta^{\left(F\right)}\left(y\right)k_{y}\left(\cdot\right)\\
\zeta^{\left(F\right)} & = & K_{F}^{-1}\delta_{x_{3}}\;.
\end{eqnarray*}
That is, 
\[
\underset{\left(K_{F}\left(x_{i},x_{j}\right)\right)_{i,j=1}^{4}}{\underbrace{\begin{bmatrix}x_{1} & x_{1} & x_{1} & x_{1}\\
x_{1} & x_{2} & x_{2} & x_{2}\\
x_{1} & x_{2} & x_{3} & x_{3}\\
x_{1} & x_{2} & x_{3} & x_{4}
\end{bmatrix}}}\begin{bmatrix}\zeta^{\left(F\right)}\left(x_{1}\right)\\
\zeta^{\left(F\right)}\left(x_{2}\right)\\
\zeta^{\left(F\right)}\left(x_{3}\right)\\
\zeta^{\left(F\right)}\left(x_{4}\right)
\end{bmatrix}=\begin{bmatrix}0\\
0\\
1\\
0
\end{bmatrix}
\]
and 
\begin{eqnarray*}
\zeta^{\left(F\right)}\left(x_{3}\right) & = & \frac{x_{1}\left(x_{2}-x_{1}\right)\left(x_{4}-x_{2}\right)}{x_{1}\left(x_{2}-x_{1}\right)\left(x_{3}-x_{2}\right)\left(x_{4}-x_{3}\right)}\\
 & = & \frac{x_{4}-x_{2}}{\left(x_{3}-x_{2}\right)\left(x_{4}-x_{3}\right)}=\left\Vert \delta_{x_{3}}\right\Vert _{\mathscr{H}}^{2}.
\end{eqnarray*}

\end{example}

\begin{example}[Sparse sample-points]
Let $V=\left\{ x_{i}\right\} _{i=1}^{\infty}$, where 
\[
x_{i}=\frac{i\left(i-1\right)}{2},\quad i\in\mathbb{N}.
\]
It follows that $x_{i+1}-x_{i}=i$, and so 
\[
\left\Vert \delta_{x_{i}}\right\Vert _{\mathscr{H}}^{2}=\frac{x_{i+1}-x_{i}}{\left(x_{i}-x_{i-1}\right)\left(x_{i+1}-x_{i}\right)}=\frac{2i-1}{\left(i-1\right)i}\xrightarrow[i\rightarrow\infty]{}0.
\]
\textbf{Conclusion.} $\left\Vert \delta_{x_{i}}\right\Vert _{\mathscr{H}}\xrightarrow[i\rightarrow\infty]{}0$
if the set $V=\left\{ x_{i}\right\} _{i=1}^{\infty}\subset\mathbb{R}_{+}$
is sparse. 
\end{example}
Now, some general facts:
\begin{lem}
Let $k:V\times V\rightarrow\mathbb{C}$ be p.d., and let $\mathscr{H}$
be the corresponding RKHS. If $x_{1}\in V$, and if $\delta_{x_{1}}$
has a representation as follows:
\begin{equation}
\delta_{x_{1}}=\sum_{y\in V}\zeta^{\left(x_{1}\right)}\left(y\right)k_{y}\;,\label{eq:pr1}
\end{equation}
then
\begin{equation}
\left\Vert \delta_{x_{1}}\right\Vert _{\mathscr{H}}^{2}=\zeta^{\left(x_{1}\right)}\left(x_{1}\right).\label{eq:pr2}
\end{equation}
\end{lem}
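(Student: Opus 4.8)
The plan is to read $\|\delta_{x_{1}}\|_{\mathscr{H}}^{2}$ directly off the representation (\ref{eq:pr1}) by pairing it against $\delta_{x_{1}}$ itself and invoking the reproducing property. The only structural input needed is that, applying (\ref{eq:pd31}) with $\varphi=\delta_{x_{1}}$, one has $\left\langle k_{y},\delta_{x_{1}}\right\rangle _{\mathscr{H}}=\delta_{x_{1}}(y)=\delta_{y,x_{1}}$ for every $y\in V$, and hence, since $\delta_{x_{1}}$ is real-valued, also $\left\langle \delta_{x_{1}},k_{y}\right\rangle _{\mathscr{H}}=\delta_{y,x_{1}}$. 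So the first step is simply to record this identity, exactly as in Lemma \ref{lem:in1}.

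Granting for the moment that $\left\langle \delta_{x_{1}},\cdot\right\rangle _{\mathscr{H}}$ may be exchanged with the (possibly infinite) sum in (\ref{eq:pr1}), the computation is immediate: substituting (\ref{eq:pr1}) into the second slot and using linearity there,
\[
\left\Vert \delta_{x_{1}}\right\Vert _{\mathscr{H}}^{2}=\left\langle \delta_{x_{1}},\sum_{y\in V}\zeta^{\left(x_{1}\right)}\left(y\right)k_{y}\right\rangle _{\mathscr{H}}=\sum_{y\in V}\zeta^{\left(x_{1}\right)}\left(y\right)\left\langle \delta_{x_{1}},k_{y}\right\rangle _{\mathscr{H}}=\sum_{y\in V}\zeta^{\left(x_{1}\right)}\left(y\right)\delta_{y,x_{1}}=\zeta^{\left(x_{1}\right)}\left(x_{1}\right),
\]
which is precisely (\ref{eq:pr2}). (Pairing (\ref{eq:pr1}) into the first slot instead produces $\left\Vert \delta_{x_{1}}\right\Vert _{\mathscr{H}}^{2}=\overline{\zeta^{\left(x_{1}\right)}\left(x_{1}\right)}$, which additionally forces $\zeta^{\left(x_{1}\right)}\left(x_{1}\right)\in\mathbb{R}$; either route closes the argument.)

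The only step deserving justification — and the sole, mild obstacle — is pushing the inner product through the series in (\ref{eq:pr1}). I would interpret that series in the sense in which $\mathscr{H}$ is built by completion in Definition \ref{def:d1}: the net of finite partial sums $h_{F}:=\sum_{y\in F}\zeta^{\left(x_{1}\right)}\left(y\right)k_{y}$, $F\in\mathscr{F}\left(V\right)$ ordered by inclusion, converges to $\delta_{x_{1}}$ in $\mathscr{H}$-norm. Each $h_{F}$ is a finite combination of kernel vectors, so $\left\langle \delta_{x_{1}},h_{F}\right\rangle _{\mathscr{H}}=\sum_{y\in F}\zeta^{\left(x_{1}\right)}\left(y\right)\delta_{y,x_{1}}$, which equals $\zeta^{\left(x_{1}\right)}\left(x_{1}\right)$ as soon as $x_{1}\in F$; in particular these scalars are eventually constant. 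On the other hand, continuity of the inner product gives $\left\langle \delta_{x_{1}},h_{F}\right\rangle _{\mathscr{H}}\to\left\langle \delta_{x_{1}},\delta_{x_{1}}\right\rangle _{\mathscr{H}}=\left\Vert \delta_{x_{1}}\right\Vert _{\mathscr{H}}^{2}$, and equating the two limits yields (\ref{eq:pr2}). This is the exact analogue, now at the level of the full vertex set $V$, of the finite-$F$ identity $\left\Vert P_{F}\left(\delta_{x_{1}}\right)\right\Vert _{\mathscr{H}}^{2}=\zeta^{\left(F\right)}\left(x_{1}\right)$ already established in Lemma \ref{lem:proj2} and the corollaries following Theorem \ref{thm:del}.
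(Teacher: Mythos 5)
Your proof is correct and follows essentially the same route as the paper, which simply substitutes (\ref{eq:pr1}) into $\left\langle \delta_{x_{1}},\cdot\right\rangle _{\mathscr{H}}$ and applies the reproducing property $\left\langle \delta_{x_{1}},k_{y}\right\rangle _{\mathscr{H}}=\delta_{y,x_{1}}$. Your additional care in justifying the interchange of the inner product with the (possibly infinite) sum via norm-convergence of the partial sums $h_{F}$ is a welcome refinement of a step the paper leaves implicit, but it is not a different argument.
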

\begin{proof}
Substitute both sides of (\ref{eq:pr1}) into $\left\langle \delta_{x_{1}},\cdot\right\rangle _{\mathscr{H}}$
where $\left\langle \cdot,\cdot\right\rangle _{\mathscr{H}}$ denotes
the inner product in $\mathscr{H}$. 
\end{proof}
\textbf{Application.} Suppose $V=\cup_{n}F_{n}$, $F_{n}\subset F_{n+1}$,
where each $F_{n}\in\mathscr{F}\left(V\right)$, then if $x_{1}\in F_{n}$,
we have 
\begin{equation}
P_{F_{n}}\left(\delta_{x_{1}}\right)=\sum_{y\in F_{n}}\left\langle x_{1},K_{F_{n}}^{-1}y\right\rangle _{l^{2}}k_{y}\label{eq:pr3}
\end{equation}
and 
\begin{equation}
\left\Vert P_{F_{n}}\left(\delta_{x_{1}}\right)\right\Vert _{\mathscr{H}}^{2}=\left\langle x_{1},K_{F_{n}}^{-1}x_{1}\right\rangle _{l^{2}}=\left(K_{F_{n}}^{-1}\delta_{x_{1}}\right)\left(x_{1}\right)\label{eq:pr4}
\end{equation}
and the expression $\left\Vert P_{F_{n}}\left(\delta_{x_{1}}\right)\right\Vert _{\mathscr{H}}^{2}$
is monotone in $n$, i.e., 
\[
\left\Vert P_{F_{n}}\left(\delta_{x_{1}}\right)\right\Vert _{\mathscr{H}}^{2}\leq\left\Vert P_{F_{n+1}}\left(\delta_{x_{1}}\right)\right\Vert _{\mathscr{H}}^{2}\leq\cdots\leq\left\Vert \delta_{x_{1}}\right\Vert _{\mathscr{H}}
\]
with 
\[
\sup_{n\in\mathbb{N}}\left\Vert P_{F_{n}}\left(\delta_{x_{1}}\right)\right\Vert _{\mathscr{H}}^{2}=\lim_{n\rightarrow\infty}\left\Vert P_{F_{n}}\left(\delta_{x_{1}}\right)\right\Vert _{\mathscr{H}}^{2}=\left\Vert \delta_{x_{1}}\right\Vert _{\mathscr{H}}.
\]

\begin{question}
Let $k:\mathbb{R}^{d}\times\mathbb{R}^{d}\rightarrow\mathbb{R}$ be
positive definite, and let $V\subset\mathbb{R}^{d}$ be a countable
discrete subset, e.g., $V=\mathbb{Z}^{d}$. When does $k\big|_{V\times V}$
have the \uline{discrete mass} property? 
\end{question}
Examples of the affirmative, or not, will be discussed below.

\subsection{Discrete RKHS from restrictions}

Let $D:=[0,\infty)$, and $k:D\times D\rightarrow\mathbb{R}$, with
\[
k\left(x,y\right)=x\wedge y=\min\left(x,y\right).
\]
Restrict to $V:=\left\{ 0\right\} \cup\mathbb{Z}_{+}\subset D$, i.e.,
consider 
\[
k^{\left(V\right)}=k\big|_{V\times V}.
\]
$\mathscr{H}\left(k\right)$: Cameron-Martin Hilbert space, consisting
of functions $f\in L^{2}\left(\mathbb{R}\right)$ s.t. 
\[
\int_{0}^{\infty}\left|f'\left(x\right)\right|^{2}dx<\infty,\quad f\left(0\right)=0.
\]
$\mathscr{H}_{V}:=\mathscr{H}\left(k_{V}\right)$. Note that 
\[
f\in\mathscr{H}\left(k_{V}\right)\Longleftrightarrow\sum_{n}\left|f\left(n\right)-f\left(n+1\right)\right|^{2}<\infty.
\]

\begin{lem}
We have $\delta_{n}=2k_{n}-k_{n+1}-k_{n-1}$. \end{lem}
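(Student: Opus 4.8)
The statement is that in the discrete RKHS $\mathscr{H}_V = \mathscr{H}(k_V)$ with $V = \{0\}\cup\mathbb{Z}_+$ and $k(x,y) = x\wedge y$, we have $\delta_n = 2k_n - k_{n+1} - k_{n-1}$ for $n\geq 1$. The natural approach is to verify this identity by testing against the reproducing kernels $k_m$, $m\in V$, which span a dense subspace of $\mathscr{H}_V$; since two elements of a RKHS agreeing in all inner products against $\{k_m\}$ are equal, it suffices to check $\langle k_m, 2k_n - k_{n+1} - k_{n-1}\rangle_{\mathscr{H}_V} = \delta_n(m)$ for every $m\in V$. By the reproducing property (\ref{eq:pd31}), the left-hand side equals $(2k_n - k_{n+1} - k_{n-1})(m) = 2(m\wedge n) - (m\wedge(n+1)) - (m\wedge(n-1))$.

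First I would reduce to the elementary fact about the function $m\mapsto m\wedge n$ on the integers: its discrete second difference. Concretely, set $g(m) := 2(m\wedge n) - (m\wedge(n+1)) - (m\wedge(n-1))$ and check three ranges by hand. For $m\leq n-1$: all three minima equal $m$, so $g(m) = 2m - m - m = 0$. For $m = n$: the terms are $2n - n - (n-1) = 1$. For $m\geq n+1$: the terms are $2n - (n+1) - (n-1) = 0$. Hence $g(m) = \delta_n(m)$ for all $m\in V$, which is exactly the claimed identity once we invoke the reproducing property.

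I should also note the small bookkeeping point that for $n\geq 1$ the vector $k_{n-1}$ makes sense (when $n=1$, $k_0 = k(\cdot,0)$, and $k(m,0) = m\wedge 0 = 0$ on $V\subset[0,\infty)$, so $k_0$ is the zero vector and the formula degenerates consistently to $\delta_1 = 2k_1 - k_2$); this matches the Brownian-motion computation in Section~\ref{sub:bm} where $\delta_{x_i}$ lies in the span of three consecutive kernels, here specialized to $x_i = i$.

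The only genuinely substantive input is that $\delta_n$ does lie in $\mathscr{H}_V$ in the first place — but that is not an obstacle here, since exhibiting $\delta_n$ explicitly as the finite combination $2k_n - k_{n+1} - k_{n-1}$ of elements of $\mathrm{span}\{k_x : x\in V\}$ simultaneously proves membership and identifies the representative; no appeal to the boundedness criterion of Theorem~\ref{thm:del} is needed. Thus there is no real obstacle: the proof is the three-case evaluation of the second difference of $m\mapsto m\wedge n$, combined with the uniqueness of an element of $\mathscr{H}_V$ determined by its inner products against the total set $\{k_m\}_{m\in V}$.
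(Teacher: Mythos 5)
Your proof is correct and follows essentially the same route as the paper: the paper introduces the discrete Laplacian and asserts $\Delta k_{n}=\delta_{n}$ (which is exactly your three-case second difference of $m\mapsto m\wedge n$), then confirms the identity by pairing with the kernels $k_{m}$ via the reproducing property. Your version merely spells out the case analysis, the $n=1$ degeneracy, and the point that the finite kernel combination itself certifies $\delta_{n}\in\mathscr{H}_{V}$ — all consistent with the paper's argument.
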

\begin{proof}
Introduce the discrete Laplacian $\Delta$, where 
\[
\left(\Delta f\right)\left(n\right)=2f\left(n\right)-f\left(n-1\right)-f\left(n+1\right),
\]
then $\Delta k_{n}=\delta_{n}$, and 
\[
\left\langle 2k_{n}-k_{n+1}-k_{n-1},k_{m}\right\rangle _{\mathscr{H}_{V}}=\left\langle \delta_{n},k_{m}\right\rangle _{\mathscr{H}_{V}}=\delta_{n,m}.
\]
\end{proof}
\begin{rem}
The same argument as in the proof of the lemma shows (\emph{mutatis
mutandis}) that any ordered discrete countable infinite subset $V\subset[0,\infty)$
yields 
\[
\mathscr{H}_{V}:=\mathscr{H}\left(k\big|_{V\times V}\right)
\]
as a RKHS which is discrete in that (Def. \ref{def:dmp}) if $V=\left\{ x_{i}\right\} _{i=1}^{\infty}$,
$x_{i}\in\mathbb{R}_{+}$, then $\delta_{x_{i}}\in\mathscr{H}_{V}$,
$\forall i\in\mathbb{N}$. \end{rem}
\begin{proof}
Fix vertices $V=\left\{ x_{i}\right\} _{i=1}^{\infty}$, 
\begin{equation}
0<x_{1}<x_{2}<\cdots<x_{i}<x_{i+1}<\infty,\quad x_{i}\rightarrow\infty.
\end{equation}
Assign conductance 
\begin{equation}
c_{i,i+1}=c_{i+1,i}=\frac{1}{x_{i+1}-x_{i}}\left(=\frac{1}{\text{dist}}\right)
\end{equation}
Let 
\begin{eqnarray}
\left(\Delta f\right)\left(x_{i}\right) & = & \left(\frac{1}{x_{i+1}-x_{i}}+\frac{1}{x_{i}-x_{i-1}}\right)f\left(x_{i}\right)\nonumber \\
 &  & -\frac{1}{x_{i}-x_{i-1}}f\left(x_{i-1}\right)-\frac{1}{x_{i+1}-x_{i}}f\left(x_{i+1}\right)
\end{eqnarray}
Equivalently, 
\begin{equation}
\left(\Delta f\right)\left(x_{i}\right)=\left(c_{i,i+1}+c_{i,i-1}\right)f\left(x_{i}\right)-c_{i,i-1}f\left(x_{i-1}\right)-c_{i,i+1}f\left(x_{i+1}\right).\label{eq:glap}
\end{equation}

\begin{rem}
The most general graph-Laplacians will be discussed in detail in sect.
\ref{sec:net} below.
\end{rem}
Then, with (\ref{eq:glap}) we have: 
\[
\Delta k_{x_{i}}=\delta_{x_{i}}
\]
where $k\left(\cdot,\cdot\right)=$ restriction of $s\wedge t$ from
$[0,\infty)\times[0,\infty)$ to $V\times V$; and therefore 
\begin{equation}
\delta_{x_{i}}=\left(c_{i,i+1}+c_{i,i-1}\right)k_{x_{i}}-c_{i,i+1}k_{x_{i+1}}-c_{i,i-1}k_{x_{i-1}}\in\mathscr{H}_{V}
\end{equation}
as the RHS in the last equation is a finite sum. Note that now the
RKHS is 
\[
\mathscr{H}_{V}=\left\{ f:V\rightarrow\mathbb{C}\:\big|\:\sum_{i=1}^{\infty}c_{i,i+1}\left|f\left(x_{i+1}\right)-f\left(x_{i}\right)\right|^{2}<\infty\right\} .
\]

\end{proof}

\subsection{The Brownian bridge}

Let $D:=\left(0,1\right)=$ the open interval $0<t<1$, and set 
\begin{equation}
k_{bridge}\left(s,t\right):=s\wedge t-st;\label{eq:bb1}
\end{equation}
then (\ref{eq:bb1}) is the covariance function for the Brownian bridge
$B_{bri}\left(t\right)$, i.e., 
\begin{equation}
B_{bri}\left(0\right)=B_{bri}\left(1\right)=0\label{eq:bb2}
\end{equation}

\begin{figure}[H]
\includegraphics[width=0.5\columnwidth]{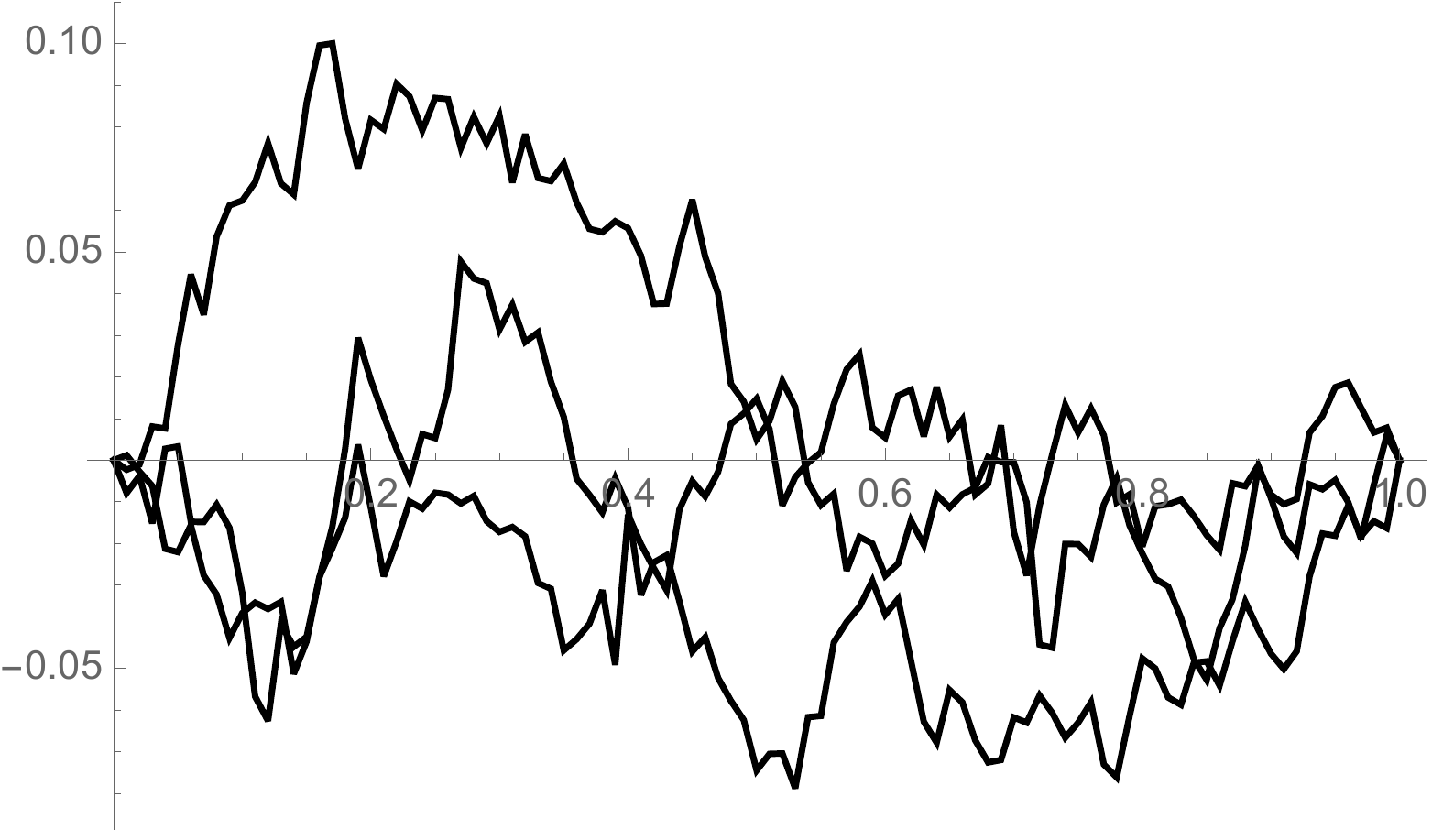}

\protect\caption{\label{fig:bb}Brownian bridge $B_{bri}\left(t\right)$, a simulation
of three sample paths of the Brownian bridge.}

\end{figure}

\begin{equation}
B_{bri}\left(t\right)=\left(1-t\right)B\left(\frac{t}{1-t}\right),\quad0<t<1;\label{eq:bb3}
\end{equation}
where $B\left(t\right)$ is Brownian motion; see Lemma \ref{lem:mc1}.

The corresponding Cameron-Martin space is now 
\begin{equation}
\mathscr{H}_{bri}=\left\{ f\;\mbox{on}\:\left[0,1\right];f'\in L^{2}\left(0,1\right),f\left(0\right)=f\left(1\right)=0\right\} \label{eq:bb4}
\end{equation}
with 
\begin{equation}
\left\Vert f\right\Vert _{\mathscr{H}_{bri}}^{2}:=\int_{0}^{1}\left|f'\left(s\right)\right|^{2}ds<\infty.\label{eq:bb5}
\end{equation}

If $V=\left\{ x_{i}\right\} _{i=1}^{\infty}$, $x_{1}<x_{2}<\cdots<1$,
is the discrete subset of $D$, then we have for $F_{n}\in\mathscr{F}\left(V\right)$,
$F_{n}=\left\{ x_{1},x_{2},\cdots,x_{n}\right\} $, 
\begin{equation}
K_{F_{n}}=\left(k_{bridge}\left(x_{i},x_{j}\right)\right)_{i,j=1}^{n},\label{eq:bb6}
\end{equation}
see (\ref{eq:bb1}), and 
\begin{equation}
\det K_{F_{n}}=x_{1}\left(x_{2}-x_{1}\right)\cdots\left(x_{n}-x_{n-1}\right)\left(1-x_{n}\right).\label{eq:bb7}
\end{equation}

As a result, we get $\delta_{x_{i}}\in\mathscr{H}_{V}^{\left(bri\right)}$
for all $i$, and 
\[
\left\Vert \delta_{x_{i}}\right\Vert _{\mathscr{H}_{V}^{\left(bri\right)}}^{2}=\frac{x_{i+1}-x_{i-1}}{\left(x_{i+1}-x_{i}\right)\left(x_{i}-x_{i-1}\right)}.
\]
Note $\lim_{x_{i}\rightarrow1}\left\Vert \delta_{x_{i}}\right\Vert _{\mathscr{H}_{V}^{\left(bri\right)}}^{2}=\infty$.

\subsection{Binomial RKHS}
\begin{defn}
Let $V=\mathbb{Z}_{+}\cup\left\{ 0\right\} $; and 
\[
k_{b}\left(x,y\right):=\sum_{n=0}^{x\wedge y}\binom{x}{n}\binom{y}{n},\quad\left(x,y\right)\in V\times V.
\]
where $\binom{x}{n}=\frac{x\left(x-1\right)\cdots\left(x-n+1\right)}{n!}$
denotes the standard binomial coefficient from the binomial expansion.

Let $\mathscr{H}=\mathscr{H}\left(k_{b}\right)$ be the corresponding
RKHS. Set 
\begin{equation}
e_{n}\left(x\right)=\begin{cases}
\binom{x}{n} & \text{if \ensuremath{n\leq x}}\\
0 & \text{if \ensuremath{n>x}}.
\end{cases}\label{eq:b1}
\end{equation}
\end{defn}
\begin{lem}[see \cite{AJ15}]
\label{lem:b1}~
\begin{enumerate}[label=(\roman{enumi})]
\item $e_{n}\left(\cdot\right)\in\mathscr{H}$, $n\in V$; 
\item $\left\{ e_{n}\right\} _{n\in V}$ is an orthonormal basis (ONB) in
the Hilbert space $\mathscr{H}$. 
\item Set $F_{n}=\left\{ 0,1,2,\ldots,n\right\} $, and 
\begin{equation}
P_{F_{n}}=\sum_{k=0}^{n}\left|e_{k}\left\rangle \right\langle e_{k}\right|\label{eq:b2}
\end{equation}
or equivalently 
\begin{equation}
P_{F_{n}}f=\sum_{k=0}^{n}\left\langle e_{k},f\right\rangle _{\mathscr{H}}e_{k}\,.\label{eq:b3}
\end{equation}

\end{enumerate}

then, 
\begin{enumerate}[resume]
\item[(iv)] Formula (\ref{eq:b3}) is well defined for all functions $f:V\rightarrow\mathbb{C}$,
$f\in\mathscr{F}unc\left(V\right)$. \end{enumerate}
\begin{enumerate}
\item[(v)] Given $f\in\mathscr{F}unc\left(V\right)$; then 
\begin{equation}
f\in\mathscr{H}\Longleftrightarrow\sum_{k=0}^{\infty}\left|\left\langle e_{k},f\right\rangle _{\mathscr{H}}\right|^{2}<\infty;\label{eq:b4}
\end{equation}
and, in this case, 
\[
\left\Vert f\right\Vert _{\mathscr{H}}^{2}=\sum_{k=0}^{\infty}\left|\left\langle e_{k},f\right\rangle _{\mathscr{H}}\right|^{2}.
\]

\end{enumerate}
\end{lem}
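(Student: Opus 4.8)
The plan is to build an explicit unitary $U\colon\mathscr{H}=\mathscr{H}(k_{b})\to\ell^{2}(V)$ carrying $e_{n}$ to the $n$-th standard basis vector of $\ell^{2}(V)$, and then read off (i)--(v) from elementary properties of orthonormal bases together with one classical interpolation identity.

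\textbf{Feature map and unitarity.} Since $\binom{x}{n}=0$ whenever $n>x$ for $x\in V$, the defining sum for $k_{b}$ truncates by itself, so $k_{b}(x,y)=\sum_{n\in V}e_{n}(x)e_{n}(y)=\langle\Phi(x),\Phi(y)\rangle_{\ell^{2}(V)}$, where $\Phi(x):=(e_{n}(x))_{n\in V}$ is finitely supported, hence lies in $\ell^{2}(V)$. By the standard feature-space model of a positive definite kernel, $k_{x}\mapsto\Phi(x)$ extends to an isometry $U$ of $\mathscr{H}$ onto $\overline{\operatorname{span}}\{\Phi(x):x\in V\}\subset\ell^{2}(V)$. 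I would then check this closed span is all of $\ell^{2}(V)$: the vectors $\Phi(x)$ are the rows of the infinite, lower-triangular, unit-diagonal Pascal matrix, whose inverse has entries $(-1)^{n-j}\binom{n}{j}$, so every standard basis vector $\varepsilon_{n}$ of $\ell^{2}(V)$ is the \emph{finite} combination $\varepsilon_{n}=\sum_{j=0}^{n}(-1)^{n-j}\binom{n}{j}\Phi(j)$; hence $U$ is unitary. Now put $e_{n}:=U^{-1}\varepsilon_{n}=\sum_{j=0}^{n}(-1)^{n-j}\binom{n}{j}k_{j}$. This realizes $e_{n}$ as a finite combination of kernel sections, so $e_{n}\in\mathscr{H}$, and the reproducing property gives $e_{n}(x)=\langle k_{x},e_{n}\rangle_{\mathscr{H}}=\langle\Phi(x),\varepsilon_{n}\rangle_{\ell^{2}(V)}=\binom{x}{n}$ for $n\le x$ and $0$ otherwise, matching (\ref{eq:b1}). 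This proves (i); and (ii) is immediate because $\{\varepsilon_{n}\}$ is an orthonormal basis of $\ell^{2}(V)$ and $U$ is unitary.

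\textbf{Parts (iii)--(v).} For (iii) I would identify $\mathscr{H}_{F_{n}}=\operatorname{span}\{e_{0},\dots,e_{n}\}$: the inclusion $\subseteq$ follows from $k_{m}=\sum_{i=0}^{m}\binom{m}{i}e_{i}$ for $m\le n$, and $\supseteq$ from the displayed inverse relation for the $e$'s; since $\{e_{i}\}$ is orthonormal, the orthogonal projection onto this subspace is exactly $\sum_{i=0}^{n}|e_{i}\rangle\langle e_{i}|$. For (iv), the point is that the functional $\langle e_{k},\cdot\rangle_{\mathscr{H}}$ extends canonically from $\mathscr{H}$ to \emph{every} $f\colon V\to\mathbb{C}$: expanding $e_{k}=\sum_{j=0}^{k}(-1)^{k-j}\binom{k}{j}k_{j}$ and using the reproducing relation $\langle k_{j},f\rangle_{\mathscr{H}}=f(j)$, one is led to set $\langle e_{k},f\rangle:=\sum_{j=0}^{k}(-1)^{k-j}\binom{k}{j}f(j)$, i.e.\ the $k$-th finite forward difference of $f$ at $0$, a finite sum of values of $f$; the right-hand side of (\ref{eq:b3}) is then a finite sum of genuine functions on $V$, hence well defined, and it agrees with the Hilbert-space inner product whenever $f\in\mathscr{H}$. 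For (v): if $f\in\mathscr{H}$, Parseval for the orthonormal basis $\{e_{k}\}$ yields both $\sum_{k}|\langle e_{k},f\rangle_{\mathscr{H}}|^{2}<\infty$ and $\|f\|_{\mathscr{H}}^{2}=\sum_{k}|\langle e_{k},f\rangle_{\mathscr{H}}|^{2}$. Conversely, given $f\colon V\to\mathbb{C}$ with $c_{k}:=\sum_{j=0}^{k}(-1)^{k-j}\binom{k}{j}f(j)$ and $\sum_{k}|c_{k}|^{2}<\infty$, set $g:=\sum_{k}c_{k}e_{k}\in\mathscr{H}$; then for each $x\in V$ the reproducing property gives $g(x)=\sum_{k=0}^{x}c_{k}\binom{x}{k}$, and $\sum_{k=0}^{x}c_{k}\binom{x}{k}=f(x)$ by Newton's forward-difference interpolation formula (a finite sum, hence valid for every function on $V$). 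Thus $f=g\in\mathscr{H}$ with $\|f\|_{\mathscr{H}}^{2}=\sum_{k}|c_{k}|^{2}$.

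\textbf{Main obstacle.} The two Pascal-matrix identities and Parseval are routine; the single essential input is Newton's forward-difference formula, which drives the pointwise identification $g=f$ in (v) and pins down the ``correct'' extension of the coordinate functionals in (iv). The real work is bookkeeping --- keeping every binomial sum truncated so that all expressions in sight remain finite, and verifying that the formally extended functionals genuinely restrict to $\langle e_{k},\cdot\rangle_{\mathscr{H}}$ on the dense subspace $\operatorname{span}\{k_{x}\}$, which is what licenses passing freely between the explicit formulas and the orthonormal-basis expansion.
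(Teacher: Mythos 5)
Your proof is correct and complete. The paper itself does not prove Lemma \ref{lem:b1} --- it defers to \cite{AJ15} --- but the mechanism you use is visibly the same one the paper relies on downstream: your factorization $k_{b}(x,y)=\sum_{n}e_{n}(x)e_{n}(y)$ via the feature map $\Phi(x)=(\binom{x}{n})_{n}$ is exactly the infinite Pascal-matrix factorization whose finite truncation $K_{n}=L_{n}L_{n}^{tr}$ appears in Corollary \ref{cor:bino}, and your inversion $\varepsilon_{n}=\sum_{j=0}^{n}(-1)^{n-j}\binom{n}{j}\Phi(j)$ is the paper's identity (\ref{eq:b5})--(\ref{eq:b7}). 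Packaging this as a unitary $U:\mathscr{H}\to\ell^{2}(V)$ is a clean way to get (i)--(iii) at once, and your reading of (iv) is the right one: $\langle e_{k},f\rangle_{\mathscr{H}}$ for general $f\in\mathscr{F}unc(V)$ only makes sense through the finite-difference functional $\sum_{j=0}^{k}(-1)^{k-j}\binom{k}{j}f(j)$, which agrees with the genuine inner product on $\mathscr{H}$ because $e_{k}$ is a finite combination of kernel sections. The converse half of (v) is correctly closed by Newton's forward-difference formula, which here is nothing more than applying the binomial inversion twice, so every identity in sight is a finite sum. One small point worth making explicit if you write this up: surjectivity of $U$ (equivalently, that $\overline{\operatorname{span}}\{\Phi(x)\}=\ell^{2}(V)$) is what turns the isometry into a unitary, and it is precisely the step that uses the lower-triangular, unit-diagonal structure of the Pascal matrix; without it you would only get that $\{e_{n}\}$ is an orthonormal \emph{system}, not a basis.
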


Fix $x_{1}\in V$, then we shall apply Lemma \ref{lem:b1} to the
function $f_{1}=\delta_{x_{1}}$ (in $\mathscr{F}unc\left(V\right)$),
$f_{1}\left(y\right)=\begin{cases}
1 & \text{if \ensuremath{y=x_{1}}}\\
0 & \text{if \ensuremath{y\neq x_{1}}.}
\end{cases}$ 
\begin{thm}
\label{thm:bino}We have 
\[
\left\Vert P_{F_{n}}\left(\delta_{x_{1}}\right)\right\Vert _{\mathscr{H}}^{2}=\sum_{k=x_{1}}^{n}\binom{k}{x_{1}}^{2}.
\]

\end{thm}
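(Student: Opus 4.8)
The plan is to compute the norm directly from the orthonormal basis $\left\{ e_{n}\right\} _{n\in V}$ furnished by Lemma \ref{lem:b1}, once the coefficients $\left\langle e_{k},\delta_{x_{1}}\right\rangle _{\mathscr{H}}$ have been identified by inverting a Pascal matrix. First I would observe that, since $F_{n}=\left\{ 0,1,\dots,n\right\}$ and $\left\{ e_{k}\right\}$ is an ONB, the projection $P_{F_{n}}$ is given by (\ref{eq:b2})--(\ref{eq:b3}); by Lemma \ref{lem:b1}(iv) the right-hand side of (\ref{eq:b3}) is meaningful when applied to $f_{1}=\delta_{x_{1}}\in\mathscr{F}unc\left(V\right)$, even though $\delta_{x_{1}}$ need not lie in $\mathscr{H}$. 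Because $P_{F_{n}}(\delta_{x_{1}})$ is then a finite linear combination of the orthonormal vectors $e_{0},\dots,e_{n}$, Parseval gives
\[
\left\Vert P_{F_{n}}(\delta_{x_{1}})\right\Vert _{\mathscr{H}}^{2}=\sum_{k=0}^{n}\left|\left\langle e_{k},\delta_{x_{1}}\right\rangle _{\mathscr{H}}\right|^{2},
\]
so the entire problem reduces to evaluating $\left\langle e_{k},\delta_{x_{1}}\right\rangle _{\mathscr{H}}$ for $0\le k\le n$.

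Next I would express each $e_{k}$ as a \emph{finite} linear combination of the kernels $k_{x}=k_{b}\left(\cdot,x\right)$. Directly from the definitions of $k_{b}$ and $e_{m}$ — splitting into the cases $y\ge x$ and $y<x$ and using $\binom{y}{m}=0$ for $m>y$ — one checks the pointwise identity $k_{x}=\sum_{m=0}^{x}\binom{x}{m}e_{m}$ on $V$, which is an equality in $\mathscr{H}$ since functions in an RKHS are determined by their values. This is a unitriangular change of basis, so inverting the lower-triangular Pascal matrix yields
\[
e_{k}=\sum_{x=0}^{k}(-1)^{k-x}\binom{k}{x}k_{x},\qquad k\in V.
\]
Pairing with $\delta_{x_{1}}$, and using $\left\langle k_{x},\delta_{x_{1}}\right\rangle _{\mathscr{H}}=\delta_{x_{1}}\left(x\right)$ (the reproducing identity (\ref{eq:pd31}) together with the same finite-sum extension legitimized in Lemma \ref{lem:b1}(iv)), I obtain
\[
\left\langle e_{k},\delta_{x_{1}}\right\rangle _{\mathscr{H}}=\sum_{x=0}^{k}(-1)^{k-x}\binom{k}{x}\delta_{x_{1}}\left(x\right)=(-1)^{k-x_{1}}\binom{k}{x_{1}},
\]
with the convention $\binom{k}{x_{1}}=0$ for $k<x_{1}$.

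Substituting back, $\bigl|\left\langle e_{k},\delta_{x_{1}}\right\rangle _{\mathscr{H}}\bigr|^{2}=\binom{k}{x_{1}}^{2}$, which vanishes for $k<x_{1}$, and hence
\[
\left\Vert P_{F_{n}}(\delta_{x_{1}})\right\Vert _{\mathscr{H}}^{2}=\sum_{k=0}^{n}\binom{k}{x_{1}}^{2}=\sum_{k=x_{1}}^{n}\binom{k}{x_{1}}^{2},
\]
which is the assertion. A parallel route avoids the ONB altogether: from $k_{x}=\sum_{m}\binom{x}{m}e_{m}$ one reads off the factorization $K_{F_{n}}=B_{n}B_{n}^{\mathsf{T}}$, where $B_{n}=\bigl(\binom{x}{m}\bigr)_{x,m=0}^{n}$ is the Pascal matrix, so $K_{F_{n}}^{-1}=(B_{n}^{-1})^{\mathsf{T}}B_{n}^{-1}$ with $(B_{n}^{-1})_{x,m}=(-1)^{x-m}\binom{x}{m}$; feeding this into $\left\Vert P_{F_{n}}(\delta_{x_{1}})\right\Vert _{\mathscr{H}}^{2}=\bigl(K_{F_{n}}^{-1}\delta_{x_{1}}\bigr)\left(x_{1}\right)$ (Lemma \ref{lem:proj2}) gives the same sum.

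The only step requiring care is the meaning of $\left\langle e_{k},\delta_{x_{1}}\right\rangle _{\mathscr{H}}$ when $\delta_{x_{1}}\notin\mathscr{H}$ — which is precisely the situation here for $k_{b}$. This is exactly what Lemma \ref{lem:b1}(iv) is for, and it causes no difficulty because $e_{k}$ is a genuine finite combination of kernel functions, so the pairing is the explicit finite sum displayed above and no honest $\mathscr{H}$-inner product against $\delta_{x_{1}}$ is ever needed; everything else is the standard inversion of the lower-triangular Pascal matrix.
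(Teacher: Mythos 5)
Your proof is correct and is essentially the paper's own argument: the paper likewise builds on the orthonormal basis $\{e_{k}\}$ of Lemma \ref{lem:b1} and the inverse Pascal matrix, packaging the computation as the factorization $K_{n}^{-1}=(L_{n}^{tr})^{-1}(L_{n})^{-1}$ and reading off the diagonal entry $(K_{F_{n}}^{-1}\delta_{x_{1}})(x_{1})=\sum_{k=x_{1}}^{n}\binom{k}{x_{1}}^{2}$ --- which is exactly your ``parallel route.'' Your primary presentation via Parseval and the explicit coefficients $\langle e_{k},\delta_{x_{1}}\rangle_{\mathscr{H}}=(-1)^{k-x_{1}}\binom{k}{x_{1}}$ is the same computation in basis form, and you correctly flag the only delicate point, namely that the pairing with $\delta_{x_{1}}\notin\mathscr{H}$ is the finite-sum extension sanctioned by Lemma \ref{lem:b1}(iv).
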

The proof of the theorem will be subdivided in steps; see below. 
\begin{lem}[see \cite{AJ15}]
~
\begin{enumerate}[label=(\roman{enumi})]
\item \label{enu:b1}For $\forall m,n\in V$, such that $m\leq n$, we
have 
\begin{equation}
\delta_{m,n}=\sum_{j=m}^{n}\left(-1\right)^{m+j}\binom{n}{j}\binom{j}{m}.\label{eq:b5}
\end{equation}

\item \label{enu:b2}For all $n\in\mathbb{Z}_{+}$, the inverse of the following
lower triangle matrix is this: With (see Fig \ref{fig:L}) 
\begin{equation}
L_{xy}^{\left(n\right)}=\begin{cases}
\binom{x}{y} & \text{if \ensuremath{y\leq x\leq n}}\\
0 & \text{if \ensuremath{x<y}}
\end{cases}\label{eq:b6}
\end{equation}
 we have:
\begin{equation}
\left(L^{\left(n\right)}\right)_{xy}^{-1}=\begin{cases}
\left(-1\right)^{x-y}\binom{x}{y} & \text{if \ensuremath{y\leq x\leq n}}\\
0 & \text{if \ensuremath{x<y}}.
\end{cases}\label{eq:b7}
\end{equation}
 
\end{enumerate}

Notation: The numbers in (\ref{eq:b7}) are the entries of the matrix
$\left(L^{\left(n\right)}\right)^{-1}$. 

\end{lem}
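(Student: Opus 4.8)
The plan is to establish part \ref{enu:b1} by a direct manipulation of binomial coefficients, and then to read off part \ref{enu:b2} as an immediate consequence via a single matrix multiplication. No induction or analytic input is needed; everything reduces to the binomial theorem.

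For \ref{enu:b1}, fix $m\leq n$ and start from the ``subset-of-a-subset'' identity $\binom{n}{j}\binom{j}{m}=\binom{n}{m}\binom{n-m}{j-m}$, valid for $m\leq j\leq n$. Substituting this into the left-hand side of (\ref{eq:b5}) and reindexing by $i=j-m$ (so that $(-1)^{m+j}=(-1)^{2m+i}=(-1)^{i}$) gives
\[
\sum_{j=m}^{n}(-1)^{m+j}\binom{n}{j}\binom{j}{m}=\binom{n}{m}\sum_{i=0}^{n-m}(-1)^{i}\binom{n-m}{i}=\binom{n}{m}\,(1-1)^{n-m}
\]
by the binomial theorem. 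The last expression is $0$ when $n>m$, and it equals $\binom{m}{m}\cdot 0^{0}=1$ when $n=m$, which is precisely $\delta_{m,n}$. (An equally short alternative is induction on $n-m$ using Pascal's rule, but the closed-form evaluation above makes the cancellation transparent.)

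For \ref{enu:b2}, let $M^{(n)}$ denote the matrix with entries $M^{(n)}_{xy}=(-1)^{x-y}\binom{x}{y}$ for $y\leq x\leq n$ and $0$ otherwise; both $L^{(n)}$ and $M^{(n)}$ are $(n+1)\times(n+1)$ lower triangular matrices with $1$'s on the diagonal, hence invertible, so it suffices to verify one of the two products is the identity. Computing the $(x,m)$-entry of $L^{(n)}M^{(n)}$, the nonzero terms run over $m\leq y\leq x$, and using $(-1)^{y-m}=(-1)^{m+y}$ we get
\[
\bigl(L^{(n)}M^{(n)}\bigr)_{xm}=\sum_{y=m}^{x}\binom{x}{y}(-1)^{y-m}\binom{y}{m}=\sum_{y=m}^{x}(-1)^{m+y}\binom{x}{y}\binom{y}{m},
\]
which is exactly the sum in (\ref{eq:b5}) with $n$ replaced by $x$, hence equals $\delta_{m,x}$. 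Therefore $L^{(n)}M^{(n)}=I$, so $M^{(n)}=(L^{(n)})^{-1}$, which is formula (\ref{eq:b7}).

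There is no genuine obstacle here; the only thing requiring care is the bookkeeping of the sign exponents and the index shift, together with the convention $0^{0}=1$ that makes the diagonal case $n=m$ of (\ref{eq:b5}) come out right. It is worth noting that \ref{enu:b2} is also consistent with Lemma \ref{lem:b1}: the columns of $L^{(n)}$ are the coordinate vectors of the $k_{x}$ (restricted to $F_{n}$) in the ONB $\{e_{k}\}$, so $(L^{(n)})^{-1}$ expresses $\{e_{k}\}$ back in terms of $\{k_{x}\}$; but the self-contained matrix computation above is shorter and does not rely on that interpretation.
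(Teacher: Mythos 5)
Your proof is correct. The paper itself gives no argument here — it defers part \ref{enu:b1} entirely to the cited reference and only remarks that \ref{enu:b2} follows from \ref{enu:b1} — and your write-up supplies exactly the missing details along the same outline: the subset-of-a-subset identity plus the binomial theorem handles (\ref{eq:b5}), and the triangular matrix product $L^{(n)}M^{(n)}=I$ reduces (\ref{eq:b7}) to (\ref{eq:b5}) with $n$ replaced by $x$. The sign bookkeeping and the $0^{0}=1$ diagonal case are handled correctly, so nothing further is needed.
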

\begin{proof}
We refer to \cite{AJ15}. In rough outline, \ref{enu:b2} follows
from \ref{enu:b1}.
\end{proof}
\begin{figure}[H]
\[
L^{\left(n\right)}=\begin{bmatrix}1 & 0 & 0 & 0 & \cdots & \cdots & 0 & \cdots & 0 & 0\\
1 & 1 & 0 & 0 & \cdots & \cdots & 0 & \cdots & 0 & 0\\
1 & 2 & 1 & 0 &  &  & \vdots &  & \vdots & \vdots\\
1 & 3 & 3 & 1 & \ddots &  & \vdots &  & \vdots & \vdots\\
\vdots & \vdots & \vdots & \vdots & \ddots &  & \vdots &  & \vdots & \vdots\\
\vdots & \vdots & \vdots & \vdots &  & 1 & 0 &  & \vdots & \vdots\\
1 & \cdots & \binom{x}{y} & \binom{x}{y+1} & \cdots & * & 1 & \ddots & \vdots & \vdots\\
\vdots & \vdots & \vdots & \vdots &  &  &  & \ddots & 0 & \vdots\\
\vdots & \vdots & \vdots & \vdots &  &  &  &  & 1 & 0\\
1 & \cdots & \binom{n}{y} & \binom{n}{y+1} & \cdots & \cdots & \cdots & \cdots & n & 1
\end{bmatrix}
\]

\protect\caption{\label{fig:L}The matrix $L_{n}$ is simply a truncated Pascal triangle,
arranged to fit into a lower triangular matrix.}
\end{figure}

\begin{cor}
\label{cor:bino}Let $k_{b}$, $\mathscr{H}$, and $n\in\mathbb{Z}_{+}$
be as above with the lower triangle matrix $L_{n}$. Set 
\begin{equation}
K_{n}\left(x,y\right)=k_{b}\left(x,y\right),\quad\left(x,y\right)\in F_{n}\times F_{n},\label{eq:b8}
\end{equation}
i.e., an $\left(n+1\right)\times\left(n+1\right)$ matrix. 
\begin{enumerate}[label=(\roman{enumi})]
\item Then $K_{n}$ is invertible with 
\begin{equation}
K_{n}^{-1}=\left(L_{n}^{tr}\right)^{-1}\left(L_{n}\right)^{-1};\label{eq:b9}
\end{equation}
an $(\text{upper triangle})\times(\text{lower triangle})$ factorization. 
\item For the diagonal entries in the $\left(n+1\right)\times\left(n+1\right)$
matrix $K_{n}^{-1}$, we have:
\[
\left\langle x,K_{n}^{-1}x\right\rangle _{l^{2}}=\sum_{k=x}^{n}\binom{k}{x}^{2}
\]

\end{enumerate}

\textbf{Conclusion.} Since 
\begin{equation}
\left\Vert P_{F_{n}}\left(\delta_{x_{1}}\right)\right\Vert _{\mathscr{H}}^{2}=\left\langle x_{1},K_{n}^{-1}x_{1}\right\rangle _{\mathscr{H}}\label{eq:b11}
\end{equation}
for all $x_{1}\in F_{n}$, we get 
\begin{eqnarray}
\left\Vert P_{F_{n}}\left(\delta_{x_{1}}\right)\right\Vert _{\mathscr{H}}^{2} & = & \sum_{k=x_{1}}^{n}\binom{k}{x_{1}}^{2}\nonumber \\
 & = & 1+\binom{x_{1}+1}{x_{1}}^{2}+\binom{x_{1}+2}{x_{1}}^{2}+\cdots+\binom{n}{x_{1}}^{2};\label{eq:b12}
\end{eqnarray}
and therefore, 
\[
\left\Vert \delta_{x_{1}}\right\Vert _{\mathscr{H}}^{2}=\sum_{k=x_{1}}^{\infty}\binom{k}{x_{1}}^{2}=\infty.
\]
In other words, no $\delta_{x}$ is in $\mathscr{H}$.

\end{cor}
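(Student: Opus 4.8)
The plan is to prove the three assertions in order: the Cholesky-type factorization $K_{n}=L_{n}L_{n}^{tr}$ (which gives invertibility and (\ref{eq:b9})), then the diagonal formula (ii), and finally the divergence of $\left\Vert \delta_{x_{1}}\right\Vert _{\mathscr{H}}^{2}$. First I would compute the $(x,y)$-entry of $L_{n}L_{n}^{tr}$. Since $\left(L_{n}^{tr}\right)_{my}=\left(L_{n}\right)_{ym}=\binom{y}{m}$ and $\binom{x}{m}=0$ for $m>x$, we get
\[
\left(L_{n}L_{n}^{tr}\right)_{xy}=\sum_{m=0}^{n}\binom{x}{m}\binom{y}{m}=\sum_{m=0}^{x\wedge y}\binom{x}{m}\binom{y}{m}=k_{b}\left(x,y\right)=K_{n}\left(x,y\right).
\]
Because $L_{n}$ is lower triangular with unit diagonal ($\left(L_{n}\right)_{xx}=\binom{x}{x}=1$), it is invertible, hence so is $K_{n}=L_{n}L_{n}^{tr}$, and inverting gives $K_{n}^{-1}=\left(L_{n}^{tr}\right)^{-1}\left(L_{n}\right)^{-1}=\left(L_{n}^{-1}\right)^{tr}L_{n}^{-1}$, which is (\ref{eq:b9}). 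Note this factorization is ``stable'' under enlarging $n$, since $L_{n+1}$ restricted to $F_{n}\times F_{n}$ is again $L_{n}$.

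Next I would read off the diagonal of $K_{n}^{-1}$. By the quoted Lemma part \ref{enu:b2}, $\left(L_{n}^{-1}\right)_{kx}=(-1)^{k-x}\binom{k}{x}$ for $x\le k\le n$ and $0$ otherwise, so from $K_{n}^{-1}=\left(L_{n}^{-1}\right)^{tr}L_{n}^{-1}$ the $(x,x)$-entry (i.e.\ $\left\langle x,K_{n}^{-1}x\right\rangle _{l^{2}}$, with $x$ identified with the $x$-th standard basis vector of $l^{2}(F_{n})$) is
\[
\left\langle x,K_{n}^{-1}x\right\rangle _{l^{2}}=\sum_{k=0}^{n}\left(\left(L_{n}^{-1}\right)_{kx}\right)^{2}=\sum_{k=x}^{n}\binom{k}{x}^{2},
\]
the signs disappearing in the square. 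This is (ii); combined with Lemma \ref{lem:proj2} (eq.\ (\ref{eq:pd8})) applied in $\mathscr{H}=\mathscr{H}\left(k_{b}\right)$ with $F=F_{n}$, it gives $\left\Vert P_{F_{n}}\left(\delta_{x_{1}}\right)\right\Vert _{\mathscr{H}}^{2}=\left(K_{n}^{-1}\delta_{x_{1}}\right)\left(x_{1}\right)=\sum_{k=x_{1}}^{n}\binom{k}{x_{1}}^{2}$, which is Theorem \ref{thm:bino} and (\ref{eq:b12}). (One could equally obtain this via the ONB $\{e_{k}\}$ of Lemma \ref{lem:b1}, using that the change of basis $e_{n}\mapsto k_{x}$ is exactly $L_{n}$, hence $\left\langle e_{k},\delta_{x_{1}}\right\rangle _{\mathscr{H}}=\left(L_{n}^{-1}\right)_{k,x_{1}}$; I would mention this as a remark but carry out the matrix version in the main text.)

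Finally I would pass to the limit: by the Monotonicity Corollary (eq.\ (\ref{eq:mono2})), $\left\Vert \delta_{x_{1}}\right\Vert _{\mathscr{H}}^{2}=\lim_{n\to\infty}\left\Vert P_{F_{n}}\left(\delta_{x_{1}}\right)\right\Vert _{\mathscr{H}}^{2}=\sum_{k=x_{1}}^{\infty}\binom{k}{x_{1}}^{2}$. Since $\binom{k}{x_{1}}\ge1$ for every $k\ge x_{1}$, this series diverges, so $\left\Vert \delta_{x_{1}}\right\Vert _{\mathscr{H}}=\infty$ and $\delta_{x_{1}}\notin\mathscr{H}$; as $x_{1}\in V$ was arbitrary, no Dirac mass lies in $\mathscr{H}\left(k_{b}\right)$ (equivalently, criterion (\ref{eq:d3}) of Theorem \ref{thm:del} fails at every $x$).

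The main obstacle is essentially absent once the inverse Pascal-matrix identity (part \ref{enu:b2}, from \cite{AJ15}) is granted: the only real content is spotting the factorization $K_{n}=L_{n}L_{n}^{tr}$, and this is forced by the definition of $k_{b}$ together with the vanishing of $\binom{x}{m}$ for $m>x$. The one point needing a little care is compatibility of the truncations as $F_{n}\nearrow V$ — but this is automatic from the nesting of the $L_{n}$, and the monotone limit (\ref{eq:mono2}) then delivers $\left\Vert \delta_{x_{1}}\right\Vert _{\mathscr{H}}^{2}$ as the supremum over the $F_{n}$, so the divergence of the series closes the argument.
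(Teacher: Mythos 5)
Your proposal is correct and follows essentially the same route as the paper: the Cholesky-type factorization $K_{n}=L_{n}L_{n}^{tr}$ via the truncated Pascal matrix, the inverse-Pascal identity from \cite{AJ15} to read off the diagonal of $K_{n}^{-1}$, and the monotone limit over $F_{n}\nearrow V$ combined with Theorem \ref{thm:del} to conclude $\delta_{x_{1}}\notin\mathscr{H}$. You in fact supply the short computation verifying $\left(L_{n}L_{n}^{tr}\right)_{xy}=k_{b}\left(x,y\right)$, which the paper leaves implicit, and your appeal to criterion (\ref{eq:d3}) correctly handles the point that one cannot literally speak of $\left\Vert \delta_{x_{1}}\right\Vert _{\mathscr{H}}$ before knowing $\delta_{x_{1}}\in\mathscr{H}$.
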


\section{\label{sec:net}Infinite network of resistors}

Here we introduce a family of positive definite kernels $k:V\times V\rightarrow\mathbb{R}$,
defined on infinite sets $V$ of vertices for a given graph $G=\left(V,E\right)$
with edges $E\subset V\times V\backslash(\text{diagonal})$.

There is a large literature dealing with analysis on infinite graphs;
see e.g., \cite{JP10,JP11,JP13}; see also \cite{OS05,BCF07,CJ11}.

Our main purpose here is to point out that every assignment of resistors
on the edges $E$ in $G$ yields a p.d. kernel $k$, and an associated
RKHS $\mathscr{H}=\mathscr{H}\left(k\right)$ such that
\begin{equation}
\delta_{x}\in\mathscr{H},\quad\text{for all \ensuremath{x\in V}.}\label{eq:g1}
\end{equation}

\begin{defn}
\label{def:g}Let $G=\left(V,E\right)$ be as above. Assume 
\begin{enumerate}[label=\arabic{enumi}.]
\item $\left(x,y\right)\in E\Longleftrightarrow\left(y,x\right)\in E$;
\item $\exists c:E\rightarrow\mathbb{R}_{+}$ (a conductance function =
1 / resistance) such that

\begin{enumerate}[label=(\roman{enumii})]
\item \label{enu:g1} $c_{\left(xy\right)}=c_{\left(yx\right)}$, $\forall\left(xy\right)\in E$; 
\item \label{enu:g2}for all $x\in V$, $\#\left\{ y\in V\:|\: c_{\left(xy\right)}>0\right\} <\infty$;
and
\item $\exists o\in V$ s.t. for $\forall x\in V\backslash\left\{ o\right\} $,
$\exists$ edges $\left(x_{i},x_{i+1}\right)_{0}^{n-1}\in E$ s.t.
$x_{o}=0$, and $x_{n}=x$; called connectedness. 
\end{enumerate}
\end{enumerate}
\end{defn}
Given $G=\left(V,E\right)$, and a fixed conductance function $c:E\rightarrow\mathbb{R}_{+}$
as specified above, we now define a corresponding Laplace operator
$\Delta=\Delta^{\left(c\right)}$ acting on functions on $V$, i.e.,
on $\mathscr{F}unc\left(V\right)$ by 
\begin{equation}
\left(\Delta f\right)\left(x\right)=\sum_{y\sim x}c_{xy}\left(f\left(x\right)-f\left(y\right)\right).\label{eq:g2}
\end{equation}

Let $\mathscr{H}$ be the Hilbert space defined as follows: A function
$f$ on $V$ is in $\mathscr{H}$ iff $f\left(o\right)=0$, and 
\begin{equation}
\left\Vert f\right\Vert _{\mathscr{H}}^{2}:=\frac{1}{2}\underset{\underset{\subset V\times V}{\left(x,y\right)\in E}}{\sum\sum}c_{xy}\left|f\left(x\right)-f\left(y\right)\right|^{2}<\infty.\label{eq:g3}
\end{equation}

\begin{lem}[\cite{JP10}]
\label{lem:lap2}For all $x\in V\backslash\left\{ o\right\} $, $\exists v_{x}\in\mathscr{H}$
s.t. 
\begin{equation}
f\left(x\right)-f\left(o\right)=\left\langle v_{x},f\right\rangle _{\mathscr{H}},\quad\forall f\in\mathscr{H}\label{eq:g4}
\end{equation}
where 
\begin{equation}
\left\langle h,f\right\rangle _{\mathscr{H}}=\frac{1}{2}\underset{\left(x,y\right)\in E}{\sum\sum}c_{xy}\left(\overline{h\left(x\right)}-\overline{h\left(y\right)}\right)\left(f\left(x\right)-f\left(y\right)\right),\quad\forall h,f\in\mathscr{H}.\label{eq:g41}
\end{equation}
(The system $\left\{ v_{x}\right\} $ is called a system of \uline{dipoles}.
)\end{lem}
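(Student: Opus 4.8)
The plan is to read the statement as a Riesz representation result: the map $L_{x}:f\mapsto f\left(x\right)-f\left(o\right)$ is a linear functional on $\mathscr{H}$, and the content of the lemma is that $L_{x}$ is \emph{bounded}, so that $v_{x}$ is simply the vector in $\mathscr{H}$ representing $L_{x}$. Thus the work splits into (a) checking that $\mathscr{H}$, equipped with the sesquilinear form (\ref{eq:g41}), really is a Hilbert space, and (b) proving the estimate $\left|f\left(x\right)-f\left(o\right)\right|\leq C_{x}\left\Vert f\right\Vert _{\mathscr{H}}$ for all $f\in\mathscr{H}$.

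For (a): the form (\ref{eq:g41}) is the polarization of the quadratic form (\ref{eq:g3}), hence nonnegative. For definiteness one uses the normalization $f\left(o\right)=0$ together with connectedness (Definition \ref{def:g}): if $\left\Vert f\right\Vert _{\mathscr{H}}=0$, then $f\left(x\right)=f\left(y\right)$ for every edge $\left(x,y\right)\in E$, so $f$ is constant along any edge-path, and since $G$ is connected $f$ is constant on $V$; as $f\left(o\right)=0$ this forces $f\equiv0$. Completeness is the content of \cite{JP10}; alternatively, once the bound in (b) is available it shows that point-evaluation is $\left\Vert \cdot\right\Vert _{\mathscr{H}}$-continuous, so any $\left\Vert \cdot\right\Vert _{\mathscr{H}}$-Cauchy sequence converges pointwise, and a routine Fatou-type argument shows the pointwise limit has finite energy and is the $\mathscr{H}$-limit.

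For (b): fix $x\in V\backslash\left\{ o\right\} $ and, using connectedness, choose a simple path $o=x_{0},x_{1},\dots,x_{n}=x$ with $\left(x_{i},x_{i+1}\right)\in E$ for $0\leq i<n$. Telescoping gives $f\left(x\right)-f\left(o\right)=\sum_{i=0}^{n-1}\left(f\left(x_{i+1}\right)-f\left(x_{i}\right)\right)$, and Cauchy--Schwarz with the weight split $1=c_{x_{i}x_{i+1}}^{-1/2}\cdot c_{x_{i}x_{i+1}}^{1/2}$ yields
\[
\left|f\left(x\right)-f\left(o\right)\right|\leq\Bigl(\sum_{i=0}^{n-1}\frac{1}{c_{x_{i}x_{i+1}}}\Bigr)^{1/2}\Bigl(\sum_{i=0}^{n-1}c_{x_{i}x_{i+1}}\left|f\left(x_{i+1}\right)-f\left(x_{i}\right)\right|^{2}\Bigr)^{1/2}\leq C_{x}\left\Vert f\right\Vert _{\mathscr{H}},
\]
where $C_{x}=\bigl(\sum_{i=0}^{n-1}c_{x_{i}x_{i+1}}^{-1}\bigr)^{1/2}$, because the path being simple, the second factor squared picks out distinct edges and is therefore dominated by $\left\Vert f\right\Vert _{\mathscr{H}}^{2}$ via (\ref{eq:g3}). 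Hence $L_{x}$ is bounded, and the Riesz theorem produces the unique $v_{x}\in\mathscr{H}$ with $f\left(x\right)-f\left(o\right)=\left\langle v_{x},f\right\rangle _{\mathscr{H}}$ for all $f\in\mathscr{H}$, which is exactly (\ref{eq:g4}); formula (\ref{eq:g41}) was already identified as the polarization of (\ref{eq:g3}).

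I expect the only genuinely delicate point to be the completeness half of (a) --- the verification that the pointwise limit of a norm-Cauchy sequence has finite energy and converges in $\mathscr{H}$ --- while the boundedness estimate and the passage to $v_{x}$ via Riesz are entirely routine. The constant $C_{x}$ produced above is path-dependent; replacing it by the infimum over all paths from $o$ to $x$ (the \emph{free resistance} between $o$ and $x$) is a natural refinement, but it is not needed for the statement of the lemma.
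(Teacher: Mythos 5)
Your proposal is correct and follows the same route as the paper: telescoping $f(x)-f(o)$ along a path from $o$ to $x$, applying Cauchy--Schwarz with the weight split $c^{-1/2}\cdot c^{1/2}$ to get $\left|f(x)-f(o)\right|^{2}\leq\bigl(\sum_{i}c_{x_{i}x_{i+1}}^{-1}\bigr)\left\Vert f\right\Vert _{\mathscr{H}}^{2}$, and then invoking the Riesz representation theorem to produce $v_{x}$. The only addition is your discussion of definiteness and completeness of $\mathscr{H}$, which the paper delegates to the cited reference.
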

\begin{proof}
Let $x\in V\backslash\left\{ o\right\} $, and use (\ref{eq:g2})
together with the Schwarz-inequality to show that 
\[
\left|f\left(x\right)-f\left(o\right)\right|^{2}\leq\sum_{i}\frac{1}{c_{x_{i}x_{i+1}}}\sum_{i}c_{x_{i}x_{i+1}}\left|f\left(x_{i}\right)-f\left(x_{i+1}\right)\right|^{2}.
\]
An application of Riesz' lemma then yields the desired conclusion. 

Note that $v_{x}=v_{x}^{\left(c\right)}$ depends on the choice of
base point $o\in V$, and on conductance function $c$; see \ref{enu:g1}-\ref{enu:g2}
and (\ref{eq:g3}).
\end{proof}
Now set 
\begin{equation}
k^{\left(c\right)}\left(x,y\right)=\left\langle v_{x},v_{y}\right\rangle _{\mathscr{H}},\quad\forall\left(xy\right)\in\left(V\backslash\left\{ o\right\} \right)\times\left(V\backslash\left\{ o\right\} \right).\label{eq:g5}
\end{equation}
It follows from a theorem that $k^{\left(c\right)}$ is a Green's
function for the Laplacian $\Delta^{\left(c\right)}$ in the sense
that 
\begin{equation}
\Delta^{\left(c\right)}k^{\left(c\right)}\left(x,\cdot\right)=\delta_{x}\label{eq:g6}
\end{equation}
where the dot in (\ref{eq:g6}) is the dummy-variable in the action.
(Note that the solution to (\ref{eq:g6}) is not unique.)
\begin{lem}[\cite{JP11}]
Let $G=\left(V,E\right)$, and conductance function $c:E\rightarrow\mathbb{R}_{+}$
be a s specified above; then $k^{\left(c\right)}$ in (\ref{eq:g5})
is positive definite, and the corresponding RKHS $\mathscr{H}\left(k^{\left(c\right)}\right)$
is the Hilbert space introduced in (\ref{eq:g3}) and (\ref{eq:g41}),
called the energy-Hilbert space. \end{lem}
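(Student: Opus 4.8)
The statement to prove is that $k^{(c)}$ defined by $k^{(c)}(x,y) = \langle v_x, v_y \rangle_{\mathscr{H}}$ is positive definite, and that the abstract RKHS $\mathscr{H}(k^{(c)})$ obtained from it (via Definition \ref{def:d1}) is canonically isometrically isomorphic to the energy-Hilbert space $\mathscr{H}$ of (\ref{eq:g3})--(\ref{eq:g41}). Positive definiteness is essentially free: for any finite $F \subset V \setminus \{o\}$ and coefficients $\{c_x\}_{x \in F}$,
\[
\sum_{x,y \in F} \overline{c_x} c_y\, k^{(c)}(x,y) = \sum_{x,y \in F} \overline{c_x} c_y \langle v_x, v_y \rangle_{\mathscr{H}} = \Bigl\| \sum_{x \in F} c_x v_x \Bigr\|_{\mathscr{H}}^2 \geq 0,
\]
so $k^{(c)}$ is a Gram kernel and hence positive definite. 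This is the easy half and I would dispatch it first.

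For the identification of the two Hilbert spaces, I would use the standard universal property of the RKHS construction. By Definition \ref{def:d1}, $\mathscr{H}(k^{(c)})$ is the Hilbert completion of $\mathrm{span}\{k^{(c)}_x : x \in V\setminus\{o\}\}$ with inner product $\langle k^{(c)}_x, k^{(c)}_y \rangle = k^{(c)}(x,y)$. Define a map $T$ on this span by $T(k^{(c)}_x) := v_x \in \mathscr{H}$. The point is that $T$ is isometric on finite linear combinations:
\[
\Bigl\| \sum c_x k^{(c)}_x \Bigr\|_{\mathscr{H}(k^{(c)})}^2 = \sum_{x,y} \overline{c_x} c_y\, k^{(c)}(x,y) = \sum_{x,y}\overline{c_x} c_y \langle v_x, v_y\rangle_{\mathscr{H}} = \Bigl\| \sum c_x v_x \Bigr\|_{\mathscr{H}}^2,
\]
by the very definition (\ref{eq:g5}) of $k^{(c)}$. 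Hence $T$ extends uniquely to an isometry $\widetilde{T} : \mathscr{H}(k^{(c)}) \hookrightarrow \mathscr{H}$. It remains to check (i) that $\widetilde{T}$ is surjective, i.e. that $\{v_x : x \in V\setminus\{o\}\}$ spans a dense subspace of $\mathscr{H}$, and (ii) that $\widetilde{T}$ intertwines the reproducing structure so that $\mathscr{H}(k^{(c)})$ really \emph{is} the energy space realized as functions on $V$, with the reproducing property (\ref{eq:g4}).

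For density, suppose $f \in \mathscr{H}$ is orthogonal to every $v_x$; then by the dipole relation (\ref{eq:g4}), $0 = \langle v_x, f\rangle_{\mathscr{H}} = f(x) - f(o) = f(x)$ for all $x \in V \setminus \{o\}$, and $f(o) = 0$ by the normalization built into (\ref{eq:g3}), so $f \equiv 0$; hence $\{v_x\}$ is total and $\widetilde T$ is onto. Finally, to see that $\mathscr{H}(k^{(c)})$ is realized concretely as the energy space of functions on $V$ (not merely abstractly isometric to it), I would transport the reproducing kernel: the function $\widetilde T(k^{(c)}_x) = v_x$ satisfies $\langle v_x, f\rangle_{\mathscr{H}} = f(x) - f(o)$, so under the identification, evaluation $f \mapsto f(x) - f(o)$ is the bounded functional represented by $k^{(c)}_x$; combined with (\ref{eq:g6}), which says $\Delta^{(c)} k^{(c)}(x,\cdot) = \delta_x$, this pins down $\mathscr{H}(k^{(c)})$ as the energy-Hilbert space with inner product (\ref{eq:g41}). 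The one genuinely non-trivial input is the density argument, and there the only subtlety is the normalization: one must use that elements of $\mathscr{H}$ are precisely functions vanishing at the base point $o$ modulo the energy seminorm — i.e. the passage from seminorm to norm in (\ref{eq:g3}) — so that $f \perp \{v_x\}$ forces $f = 0$ rather than $f = \text{const}$. Everything else is the formal RKHS uniqueness machinery already laid out in Section \ref{sec:drkhs}.
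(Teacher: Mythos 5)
Your proof is correct. The paper itself does not supply an argument for this lemma --- its ``proof'' is only the citation to \cite{JP10,JP11,JP13} --- so you have filled in a genuine gap, and you have done it the standard way: positive definiteness is immediate from the Gram form $\sum\overline{c_x}c_y\langle v_x,v_y\rangle_{\mathscr{H}}=\Vert\sum c_xv_x\Vert_{\mathscr{H}}^{2}\geq 0$, and the identification of $\mathscr{H}(k^{(c)})$ with the energy space is the isometry $k_x^{(c)}\mapsto v_x$ on finite spans, extended by completion. Note that this isometry computation is exactly what the paper carries out later in Corollary \ref{cor:lap4}, so your argument is consistent with (indeed anticipates) the paper's own treatment. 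The two points you flag as the real content --- totality of the dipole system $\{v_x\}$ in $\mathscr{H}$ via the reproducing relation $\langle v_x,f\rangle_{\mathscr{H}}=f(x)-f(o)$ together with the normalization $f(o)=0$ built into (\ref{eq:g3}), and the passage from the energy seminorm to a norm (constants being killed) --- are precisely the points that need care, and you handle both correctly.
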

\begin{proof}
See \cite{JP10,JP11,JP13}.\end{proof}
\begin{prop}
Let $x\in V\backslash\left\{ o\right\} $, and let $c:E\rightarrow\mathbb{R}_{+}$
be specified as above. Let $\mathscr{H}=\mathscr{H}\left(k^{c}\right)$
be the corresponding RKHS. Then $\delta_{x}\in\mathscr{H}$, and 
\begin{equation}
\left\Vert \delta_{x}\right\Vert _{\mathscr{H}}^{2}=\sum_{y\sim x}c_{\left(xy\right)}=:c\left(x\right).\label{eq:g7}
\end{equation}
\end{prop}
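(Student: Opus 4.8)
The plan is to verify (\ref{eq:g7}) directly from the explicit description of $\mathscr{H}$ recorded in (\ref{eq:g3}). By the lemma preceding this proposition, $\mathscr{H}=\mathscr{H}(k^{(c)})$ is exactly the space of functions $f$ on $V$ with $f(o)=0$ and finite Dirichlet energy $\frac{1}{2}\sum\sum_{(u,v)\in E}c_{uv}|f(u)-f(v)|^{2}$, the $\mathscr{H}$-norm being precisely this energy. So for a fixed $x\in V\setminus\{o\}$ it suffices to check that $\delta_{x}$ meets both requirements and then to evaluate its energy.

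First I would observe that $\delta_{x}(o)=0$ because $x\neq o$, so the base-point normalization is free. Then I would compute the energy sum for $f=\delta_{x}$: a summand $c_{uv}|\delta_{x}(u)-\delta_{x}(v)|^{2}$ survives only when precisely one of $u,v$ equals $x$, i.e.\ only on the edges incident to $x$, and on each such edge the difference factor is $1$. Pairing the edge $(x,y)$ with $(y,x)$ and using $c_{(xy)}=c_{(yx)}$ (assumption \ref{enu:g1}), the energy of $\delta_{x}$ collapses to $\frac{1}{2}\cdot 2\sum_{y\sim x}c_{xy}=\sum_{y\sim x}c_{xy}=c(x)$, which is finite by local finiteness of the graph (assumption \ref{enu:g2}). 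Hence $\delta_{x}\in\mathscr{H}$ and simultaneously $\|\delta_{x}\|_{\mathscr{H}}^{2}=c(x)$, which is (\ref{eq:g7}).

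For a structural reading of the same fact, one can instead set $w:=\sum_{y\sim x}c_{xy}(v_{x}-v_{y})=c(x)v_{x}-\sum_{y\sim x}c_{xy}v_{y}$, a finite linear combination of dipoles and hence a genuine element of $\mathscr{H}$; pairing $w$ against an arbitrary $f\in\mathscr{H}$ and using (\ref{eq:g4}) gives $\langle w,f\rangle_{\mathscr{H}}=\sum_{y\sim x}c_{xy}\bigl(f(x)-f(y)\bigr)=(\Delta^{(c)}f)(x)$. Taking $f=v_{z}=k^{(c)}(z,\cdot)$ and invoking $\Delta^{(c)}k^{(c)}(z,\cdot)=\delta_{z}$ from (\ref{eq:g6}) shows $w(z)=\delta_{x,z}$ for $z\neq o$, so $w=\delta_{x}$; taking $f=w$ then gives $\|\delta_{x}\|_{\mathscr{H}}^{2}=(\Delta^{(c)}\delta_{x})(x)=c(x)$ once more. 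There is no serious obstacle in any of this: the only point requiring care is that the assertion $\delta_{x}\in\mathscr{H}$ genuinely rests on the already-cited identification of $\mathscr{H}(k^{(c)})$ with the energy-Hilbert space together with the local finiteness that makes the energy sum converge; granting these, the computation is immediate.
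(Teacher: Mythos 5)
Your proof is correct, but it follows a genuinely different route from the paper's. The paper treats this proposition as an application of its main criterion, Theorem \ref{thm:del}: fixing a finite set $F$ containing $x$ together with all of its neighbours, it identifies the inverse matrix $K_{F}^{-1}$ with the graph Laplacian (via $\Delta v_{x}=\delta_{x}-\delta_{o}$), notes that $(K_{F}^{-1}\delta_{x})(x)=(\Delta\delta_{x})(x)=c(x)$ is then independent of $F$, and invokes the equivalence of conditions (i) and (iii) there to get both $\delta_{x}\in\mathscr{H}$ and the value of the norm. You instead compute the Dirichlet energy of $\delta_{x}$ directly from (\ref{eq:g3}), using the cited identification of $\mathscr{H}(k^{(c)})$ with the energy Hilbert space; and your second, structural argument exhibits $\delta_{x}$ as the finite dipole combination $c(x)v_{x}-\sum_{y\sim x}c_{xy}v_{y}$, which is exactly formula (\ref{eq:g121}) of Corollary \ref{cor:lap1}, so you prove that corollary along the way. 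Your route is shorter; what the paper's route buys is a worked illustration of the finite-matrix criterion (\ref{eq:d3}) that is its central tool, and it needs only the reproducing property of the dipoles rather than the full isometric identification. Two points carry the weight of your first argument and deserve to be said explicitly, though neither is a gap: first, the isometry $v_{x}\mapsto k_{x}^{(c)}$ of Corollary \ref{cor:lap4} acts as the identity on functions (since $k_{x}^{(c)}(y)=\left\langle v_{x},v_{y}\right\rangle =v_{x}(y)-v_{x}(o)=v_{x}(y)$ under the normalization $v_{x}(o)=0$), so the Dirichlet energy of the function $\delta_{x}$ really is its squared RKHS norm; second, the dipoles are total in the energy space (if $f\perp v_{x}$ for all $x$ then $f(x)=f(o)=0$ for all $x$), so the RKHS is the whole energy space and membership of $\delta_{x}$ does follow from finiteness of its energy.
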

\begin{proof}
We study the finite matrices, defined for $\forall F\in\mathscr{F}\left(V\right)$,
by 
\begin{equation}
K_{F}\left(x,y\right)=k^{c}\left(x,y\right),\quad\left(x,y\right)\in F\times F.\label{eq:g8}
\end{equation}
Fix $x\in V\backslash\left\{ o\right\} $, and pick $F\in\mathscr{F}\left(V\right)$
such that 
\begin{equation}
\left\{ x\right\} \cup\left\{ y\in V\:|\: y\sim x\right\} \subset F,\label{eq:g9}
\end{equation}
see Fig \ref{fig:nb}; an interior point:

\begin{figure}[H]
\includegraphics[width=0.6\textwidth]{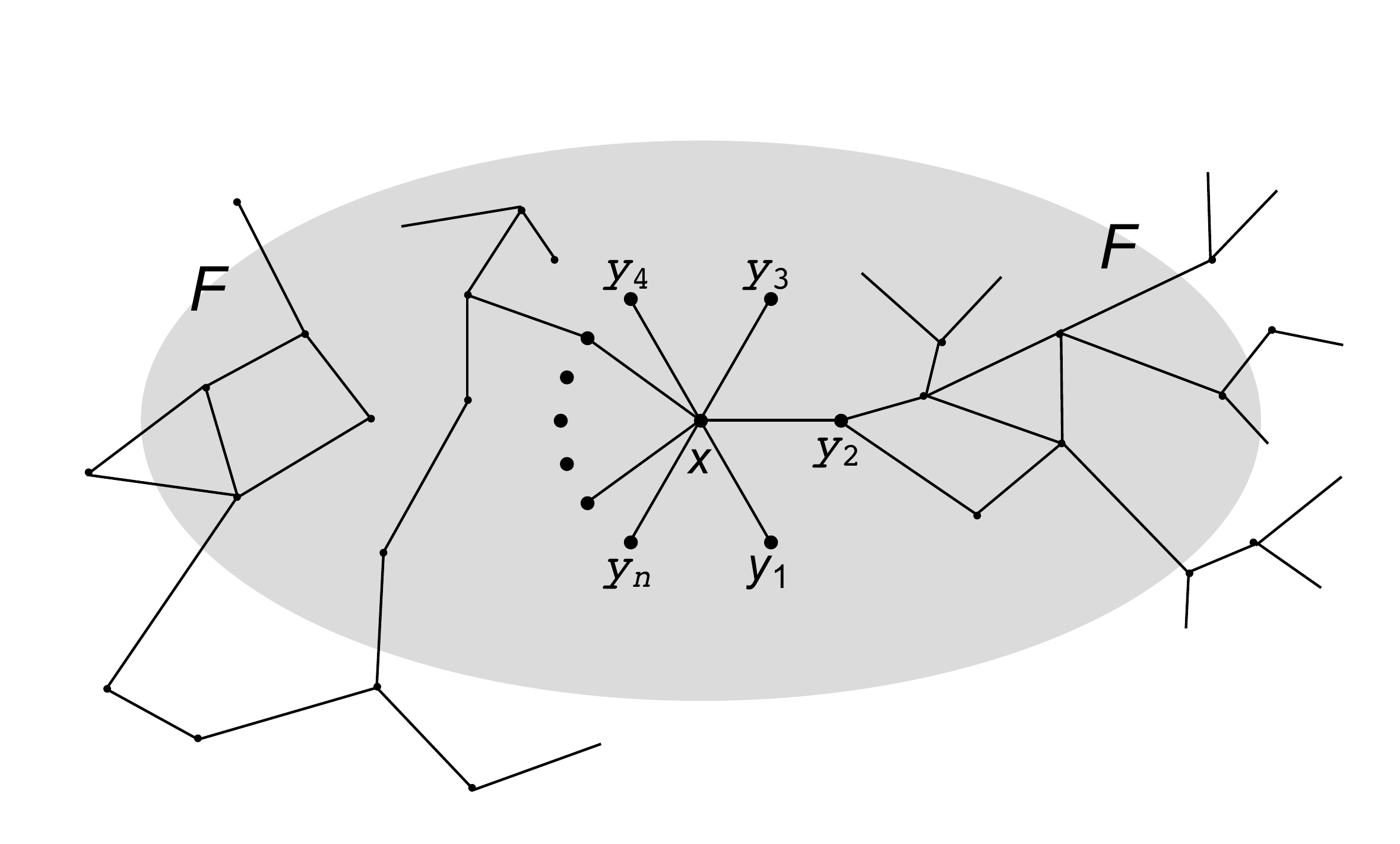}

\protect\caption{\label{fig:nb}Neighborhood of $x$, see Def. \ref{def:g} \ref{enu:g2}.
An interior point $x$. }
\end{figure}

Let $F\in\mathscr{F}\left(V\right)$ be as in (\ref{eq:g8}) and in
Fig \ref{fig:nb}, and let $\Delta=\Delta^{\left(c\right)}$ be the
Laplace operator (\ref{eq:g2}), then for all $\left(x,y\right)\in F\times F$,
we have:
\begin{eqnarray}
\left\langle x,K_{F}^{-1}y\right\rangle _{l^{2}} & = & \left\langle \delta_{x},\Delta\delta_{y}\right\rangle _{l^{2}}\nonumber \\
 & = & \left(\Delta\delta_{y}\right)\left(x\right)\nonumber \\
 & = & \begin{cases}
c\left(x\right) & \text{if \ensuremath{y=x}; see }\left(\ref{eq:g7}\right)\\
-c_{\left(xy\right)} & \text{if \ensuremath{y\sim x}}\\
0 & \text{for all other values of \ensuremath{y}}
\end{cases}\label{eq:g10}
\end{eqnarray}
In particular, 
\[
\sup_{F\in\mathscr{F}\left(V\right)}\left(K_{F}\delta_{x}\right)\left(x\right)<\infty;
\]
and in fact, 
\[
\left\Vert \delta_{x}\right\Vert _{\mathscr{H}}^{2}=c\left(x\right),\;\text{for all \ensuremath{x\in V\backslash\left\{ o\right\} },}
\]
as claimed in the Proposition. 

The last step in the present proof uses the equivalence \ref{enu:d1}$\Leftrightarrow$\ref{enu:d2}$\Leftrightarrow$\ref{enu:d3}
from Theorem \ref{thm:del} above. 

Finally, we note that the assertion in (\ref{eq:g10}) follows from
\begin{equation}
\Delta v_{x}=\delta_{x}-\delta_{o},\quad\forall x\in V\backslash\left\{ o\right\} .\label{eq:g11}
\end{equation}
And (\ref{eq:g11}) in turn follows from (\ref{eq:g4}), (\ref{eq:g2})
and a straightforward computation.\end{proof}
\begin{cor}
\label{cor:lap1}Let $G=\left(V,E\right)$ and conductance $c:E\rightarrow\mathbb{R}_{+}$
be as specified above. Let $\Delta=\Delta^{\left(c\right)}$ be the
corresponding Laplace operator. Let $\mathscr{H}=\mathscr{H}\left(k^{c}\right)$
be the RKHS. Then 
\begin{equation}
\left\langle \delta_{x},f\right\rangle _{\mathscr{H}}=\left(\Delta f\right)\left(x\right)\label{eq:g12}
\end{equation}
and 
\begin{equation}
\delta_{x}=c\left(x\right)v_{x}-\sum_{y\sim x}c_{xy}v_{y}\label{eq:g121}
\end{equation}
holds for all $x\in V$. \end{cor}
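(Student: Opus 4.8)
The plan is to derive both identities from two facts already in hand: the reproducing property of the dipoles, $f(x)-f(o)=\langle v_x,f\rangle_{\mathscr{H}}$ (Lemma \ref{lem:lap2}, eq.~(\ref{eq:g4})), and the relation $\Delta v_x=\delta_x-\delta_o$ (eq.~(\ref{eq:g11})) established in the Proposition above, which also gives $\delta_x\in\mathscr{H}$ with $\|\delta_x\|_{\mathscr{H}}^2=c(x):=\sum_{y\sim x}c_{xy}$. Fix $x\in V\setminus\{o\}$ (the base point is degenerate and handled by the normalization $f(o)=0$ built into (\ref{eq:g3})). All sums below are finite by the local-finiteness hypothesis \ref{enu:g2} of Definition \ref{def:g}, so no convergence issue arises.

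For (\ref{eq:g12}) I would compute $\langle\delta_x,f\rangle_{\mathscr{H}}$ straight from the definition (\ref{eq:g41}) of the energy form. Renaming the summation edge to $(p,q)$, the factor $\overline{\delta_x(p)}-\overline{\delta_x(q)}$ vanishes unless exactly one of $p,q$ equals $x$, so only edges incident to $x$ survive. For each neighbour $y\sim x$ the ordered pair $(x,y)$ contributes $c_{xy}\bigl(f(x)-f(y)\bigr)$ and the pair $(y,x)$ contributes $c_{yx}\bigl(f(x)-f(y)\bigr)$; since $c_{xy}=c_{yx}$ the two are equal, and together they cancel the prefactor $\tfrac12$. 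What is left is $\sum_{y\sim x}c_{xy}\bigl(f(x)-f(y)\bigr)=(\Delta f)(x)$ by (\ref{eq:g2}), which is (\ref{eq:g12}).

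For (\ref{eq:g121}) set $w:=c(x)v_x-\sum_{y\sim x}c_{xy}v_y\in\mathscr{H}$, a finite linear combination of dipoles. For arbitrary $f\in\mathscr{H}$, (\ref{eq:g4}) gives
\begin{align*}
\langle w,f\rangle_{\mathscr{H}}
&=c(x)\bigl(f(x)-f(o)\bigr)-\sum_{y\sim x}c_{xy}\bigl(f(y)-f(o)\bigr)\\
&=c(x)f(x)-\sum_{y\sim x}c_{xy}f(y)-\Bigl(c(x)-\sum_{y\sim x}c_{xy}\Bigr)f(o).
\end{align*}
Because $c(x)=\sum_{y\sim x}c_{xy}$ the $f(o)$-term drops out, leaving $\sum_{y\sim x}c_{xy}\bigl(f(x)-f(y)\bigr)=(\Delta f)(x)=\langle\delta_x,f\rangle_{\mathscr{H}}$ by (\ref{eq:g12}). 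Since $f\in\mathscr{H}$ was arbitrary and $\delta_x\in\mathscr{H}$, we conclude $w=\delta_x$, i.e.\ (\ref{eq:g121}). (Equivalently, one could pair both sides with the total family $\{v_z\}$ and invoke (\ref{eq:g6}); the $f$-test above is shorter.)

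There is no genuine obstacle; the content is organizational. The one place needing care is the base-point bookkeeping: (\ref{eq:g4}) reproduces $f(x)-f(o)$, not $f(x)$, so the computation only closes because the Laplacian coefficients $c(x)$ and $-c_{xy}$ sum to zero over $\{x\}\cup\{y:y\sim x\}$, annihilating the $f(o)$ contribution — the discrete analogue of current conservation, and precisely why it is the \emph{unnormalized} graph Laplacian (\ref{eq:g2}) that appears. The other point worth stating is that one must first know $\delta_x\in\mathscr{H}$ before concluding $w=\delta_x$ in $\mathscr{H}$; this is supplied by the Proposition above (equivalently via Theorem \ref{thm:del} and the estimate behind (\ref{eq:g10})).
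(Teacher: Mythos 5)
Your proof is correct, and it takes a somewhat different route from the paper's. The paper disposes of (\ref{eq:g12}) by a density argument: since the dipoles $\left\{ v_{y}\right\}$ span a dense subspace of $\mathscr{H}$, it suffices to check the identity for $f=v_{y}$, where it reduces to the Green's-function relation (\ref{eq:g6}) together with the matrix computation (\ref{eq:g10}); the identity (\ref{eq:g121}) is left implicit. You instead compute $\left\langle \delta_{x},f\right\rangle _{\mathscr{H}}$ for \emph{arbitrary} $f\in\mathscr{H}$ directly from the energy form (\ref{eq:g41}), observing that only edges incident to $x$ survive and that the two orientations of each such edge cancel the factor $\tfrac{1}{2}$; this avoids the (harmless but unstated) continuity check that the density argument needs, namely that $f\mapsto\left(\Delta f\right)\left(x\right)$ is $\mathscr{H}$-continuous. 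Your derivation of (\ref{eq:g121}) by pairing $w:=c\left(x\right)v_{x}-\sum_{y\sim x}c_{xy}v_{y}$ against all $f$ via (\ref{eq:g4}) and noting that the $f\left(o\right)$-terms cancel because $c\left(x\right)=\sum_{y\sim x}c_{xy}$ is exactly the right bookkeeping, and it supplies a proof of the second identity that the paper only gestures at. Two minor remarks: your direct energy-form computation of $\left\Vert \delta_{x}\right\Vert _{\mathscr{H}}^{2}=c\left(x\right)$ actually re-proves $\delta_{x}\in\mathscr{H}$ on the spot (local finiteness makes the sum finite and $\delta_{x}\left(o\right)=0$ for $x\neq o$), so the appeal to the preceding Proposition is not strictly needed; and for $x$ adjacent to the base point the sum in (\ref{eq:g121}) involves $v_{o}$, which should be read as $0$ --- a convention the paper also leaves tacit.
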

\begin{proof}
Since the system $\left\{ v_{x}\right\} $ of dipoles (see (\ref{eq:g4}))
span a dense subspace in $\mathscr{H}$, it is enough to verify (\ref{eq:g12})
when $f=v_{y}$ for $y\in V\backslash\left\{ o\right\} $. But in
this case, (\ref{eq:g12}) follows from (\ref{eq:g6}) and (\ref{eq:g10}).\end{proof}
\begin{cor}
\label{cor:lap4}Let $G=\left(V,E\right)$, and conductance $c:E\rightarrow\mathbb{R}_{+}$
be as before; let $\Delta^{\left(c\right)}$ be the Laplace operator,
and $\mathscr{H}_{E}^{\left(c\right)}$ the energy-Hilbert space in
Definition \ref{def:g} (see (\ref{eq:g3})). Let $k^{\left(c\right)}\left(x,y\right)=\left\langle v_{x},v_{y}\right\rangle _{\mathscr{H}_{E}}$
be the kernel from (\ref{eq:g5}), i.e., the Green's function of $\Delta^{\left(c\right)}$.
Then the two Hilbert spaces $\mathscr{H}_{E}$, and $\mathscr{H}\left(k^{\left(c\right)}\right)=RKHS\left(k^{\left(c\right)}\right)$,
are naturally isometrically isomorphic via $v_{x}\longmapsto k_{x}^{\left(c\right)}$
where $k_{x}^{\left(c\right)}=k^{\left(c\right)}\left(x,\cdot\right)$
for all $x\in V$. \end{cor}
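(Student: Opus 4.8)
The plan is to realize the isomorphism first on the common algebraic span and then pass to Hilbert-space completions. First I would record two density facts that are already available: the system of dipoles $\left\{v_{x}\right\}_{x\in V\setminus\left\{o\right\}}$ spans a dense subspace of the energy space $\mathscr{H}_{E}$ (this is exactly what is used in the proof of Corollary \ref{cor:lap1}, cf.\ \cite{JP10}), while the kernel sections $\left\{k_{x}^{\left(c\right)}\right\}$ span a dense subspace of $\mathscr{H}\left(k^{\left(c\right)}\right)$ by the very construction of an RKHS in Definition \ref{def:d1}. On $span\left\{v_{x}\right\}$ I would then define $J$ by $J\left(\sum_{x}a_{x}v_{x}\right):=\sum_{x}a_{x}k_{x}^{\left(c\right)}$ over finitely supported sums, and the task is to prove that $J$ is a well-defined isometry onto a dense subspace.

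The one computation that does all the work is the chain of equalities, valid for every finitely supported family of scalars $\left\{a_{x}\right\}$,
\begin{align*}
\left\Vert \sum\nolimits_{x}a_{x}v_{x}\right\Vert_{\mathscr{H}_{E}}^{2}
&=\sum_{x}\sum_{y}\overline{a_{x}}\,a_{y}\left\langle v_{x},v_{y}\right\rangle_{\mathscr{H}_{E}}\\
&=\sum_{x}\sum_{y}\overline{a_{x}}\,a_{y}\,k^{\left(c\right)}\left(x,y\right)\\
&=\left\Vert \sum\nolimits_{x}a_{x}k_{x}^{\left(c\right)}\right\Vert_{\mathscr{H}\left(k^{\left(c\right)}\right)}^{2},
\end{align*}
where the middle step is the definition (\ref{eq:g5}) of $k^{\left(c\right)}$ and the last step is the defining inner product (\ref{eq:pd3}) of the RKHS. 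This single identity simultaneously yields well-definedness of $J$ (if the left-hand side vanishes, so does the right) and the fact that $J$ is isometric on its domain.

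From there the argument is soft: an isometry with dense domain extends uniquely to an isometry $\overline{J}:\mathscr{H}_{E}\to\mathscr{H}\left(k^{\left(c\right)}\right)$ of the completions; its range is closed, being the isometric image of a complete space, and it contains the dense set $span\left\{k_{x}^{\left(c\right)}\right\}$, so $\overline{J}$ is onto and hence a unitary isomorphism with $\overline{J}v_{x}=k_{x}^{\left(c\right)}$, which is the assertion. As a consistency check I would note that $\overline{J}$ is in fact the identity on functions: since every $f\in\mathscr{H}_{E}$ satisfies $f\left(o\right)=0$, formula (\ref{eq:g4}) applied to $f=v_{y}$ gives $k^{\left(c\right)}\left(x,y\right)=\left\langle v_{x},v_{y}\right\rangle_{\mathscr{H}_{E}}=v_{y}\left(x\right)$, and by symmetry of the kernel $k_{x}^{\left(c\right)}\left(\cdot\right)=k^{\left(c\right)}\left(x,\cdot\right)=v_{x}\left(\cdot\right)$ pointwise on $V$.

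I do not expect a genuine obstacle here; the only point that needs care is the compatibility of the two a priori different inner products on the shared algebraic span, which is precisely the displayed identity, i.e.\ (\ref{eq:g5}). A minor bookkeeping point is that elements of the abstract completion $\mathscr{H}\left(k^{\left(c\right)}\right)$ should be realized as genuine functions on $V$ --- legitimate since point-evaluation is $\mathscr{H}$-continuous through $k_{x}^{\left(c\right)}$ --- after which $\overline{J}$ literally becomes the identity map between the two function spaces, which is what makes the isomorphism "natural".
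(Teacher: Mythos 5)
Your proposal is correct and follows essentially the same route as the paper: the entire content is the single chain of equalities identifying $\bigl\Vert \sum_{x}a_{x}v_{x}\bigr\Vert_{\mathscr{H}_{E}}^{2}$ with $\bigl\Vert \sum_{x}a_{x}k_{x}^{\left(c\right)}\bigr\Vert_{\mathscr{H}\left(k^{\left(c\right)}\right)}^{2}$ via (\ref{eq:g5}), followed by the standard extension from dense spans to the completions, which is exactly the paper's argument. Your added remarks on well-definedness and on the identification $k_{x}^{\left(c\right)}=v_{x}$ as functions on $V$ are correct elaborations of points the paper leaves implicit.
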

\begin{proof}
Let $F\in\mathscr{F}\left(V\right)$, and let $\xi$ be a function
on $F$; then 
\begin{eqnarray*}
\left\Vert \sum\nolimits _{x\in F}\xi\left(x\right)k_{x}^{\left(c\right)}\right\Vert _{\mathscr{H}\left(k^{\left(c\right)}\right)}^{2} & = & \underset{F\times F}{\sum\sum}\overline{\xi\left(x\right)}\xi\left(y\right)k^{\left(c\right)}\left(x,y\right)\\
 & \underset{\left(\ref{eq:g5}\right)}{=} & \underset{F\times F}{\sum\sum}\overline{\xi\left(x\right)}\xi\left(y\right)\left\langle v_{x},v_{y}\right\rangle _{\mathscr{H}_{E}}\\
 & = & \left\Vert \sum\nolimits _{x\in F}\xi\left(x\right)v_{x}\right\Vert _{\mathscr{H}_{E}}^{2}.
\end{eqnarray*}

The remaining steps in the proof of the Corollary now follows from
the standard completion from dense subspaces in the respective two
Hilbert spaces $\mathscr{H}_{E}$ and $\mathscr{H}\left(k^{\left(c\right)}\right)$. 
\end{proof}
In the following we show how the kernels $k^{\left(c\right)}:V\times V\rightarrow\mathbb{R}$
from (\ref{eq:g5}) in Lemma \ref{lem:lap2} are related to metrics
on $V$; so called \emph{resistance metrics} (see, e.g., \cite{JP10,AJSV13}.)
\begin{cor}
\label{cor:lap2}Let $G=\left(V,E\right)$, and conductance $c:E\rightarrow\mathbb{R}_{+}$
be as above; and let $k^{\left(c\right)}\left(x,y\right):=\left\langle v_{x},v_{y}\right\rangle _{\mathscr{H}_{E}}$
be the corresponding Green's function for the graph Laplacian $\Delta^{\left(c\right)}$. 

Then there is a \uline{metric} $R\left(=R^{\left(c\right)}=\mbox{the resistance metric}\right)$,
such that 
\begin{equation}
k^{\left(c\right)}\left(x,y\right)=\frac{R^{\left(c\right)}\left(o,x\right)+R^{\left(c\right)}\left(o,y\right)-R^{\left(c\right)}\left(x,y\right)}{2}\label{eq:gm1}
\end{equation}
holds on $V\times V$. Here the base-point $o\in V$ is chosen and
fixed s.t. 
\begin{equation}
\left\langle V_{x},f\right\rangle _{\mathscr{H}_{E}}=f\left(x\right)-f\left(o\right),\quad\forall f\in\mathscr{H}_{E},\;\forall x\in V.\label{eq:gm2}
\end{equation}
\end{cor}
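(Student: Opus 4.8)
The plan is to exhibit the metric $R^{(c)}$ explicitly from the dipole system $\{v_x\}$ of Lemma \ref{lem:lap2}, and then obtain (\ref{eq:gm1}) by polarization. Concretely, set
\[
R^{(c)}(x,y):=\big\|v_x-v_y\big\|_{\mathscr{H}_E}^{2},\qquad (x,y)\in V\times V,
\]
adopting the convention $v_o:=0$; this is forced by (\ref{eq:gm2}), since $\langle v_o,f\rangle_{\mathscr{H}_E}=f(o)-f(o)=0$ for all $f\in\mathscr{H}_E$. With this convention $R^{(c)}(o,x)=\|v_x\|_{\mathscr{H}_E}^{2}$ for every $x\in V$. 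One should also extend $k^{(c)}$ from $(V\setminus\{o\})\times(V\setminus\{o\})$ to all of $V\times V$ by $k^{(c)}(o,\cdot):=0$, which is the natural convention because $k^{(c)}(o,y)=\langle v_o,v_y\rangle_{\mathscr{H}_E}=0$.

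Granting for the moment that $R^{(c)}$ is a metric, identity (\ref{eq:gm1}) is then a one-line computation: expanding the norm and using (\ref{eq:g5}),
\[
R^{(c)}(x,y)=\|v_x\|_{\mathscr{H}_E}^{2}+\|v_y\|_{\mathscr{H}_E}^{2}-2\langle v_x,v_y\rangle_{\mathscr{H}_E}=R^{(c)}(o,x)+R^{(c)}(o,y)-2\,k^{(c)}(x,y),
\]
and solving for $k^{(c)}(x,y)$ gives exactly $k^{(c)}(x,y)=\frac{1}{2}\big(R^{(c)}(o,x)+R^{(c)}(o,y)-R^{(c)}(x,y)\big)$. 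The edge case $x=o$ (or $y=o$) is consistent, since both sides vanish there. So the whole corollary reduces to the assertion that $R^{(c)}$ is a genuine metric on $V$.

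That is the step I expect to be the main obstacle. Symmetry is clear, and $R^{(c)}(x,y)=0\iff x=y$ follows from injectivity of $x\mapsto v_x$ (for instance, using that $\delta_x\in\mathscr{H}=\mathscr{H}\left(k^{(c)}\right)$ by the Proposition above, together with $\langle v_y,\delta_x\rangle_{\mathscr{H}_E}=\delta_{x,y}$ for $x\neq o$). The nontrivial point is the triangle inequality $R^{(c)}(x,z)\le R^{(c)}(x,y)+R^{(c)}(y,z)$, which is \emph{not} a consequence of the Hilbert-space triangle inequality applied to $v_x-v_z=(v_x-v_y)+(v_y-v_z)$, since $R^{(c)}$ is the \emph{square} of $\|\cdot\|_{\mathscr{H}_E}$. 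Here I would invoke the classical fact that the effective resistance of an electrical network satisfies the triangle inequality (Klein--Randi\'c), noting that $R^{(c)}$ is precisely that effective resistance via the variational identity $\|v_x-v_y\|_{\mathscr{H}_E}^{2}=\big(\inf\{\|g\|_{\mathscr{H}_E}^{2}:g(x)-g(y)=1\}\big)^{-1}$, and citing \cite{JP10} (and \cite{AJSV13}) for the proof in the present energy-space setting. Once the metric property is in hand the corollary is complete; as a byproduct one records that although $v_x$ (hence $k^{(c)}$) depends on the base point $o$, the combination on the right-hand side of (\ref{eq:gm1}), and thus $R^{(c)}$ itself, does not.
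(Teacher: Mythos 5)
Your proposal is correct and follows essentially the same route as the paper: both define $R^{(c)}(x,y)=\|v_x-v_y\|_{\mathscr{H}_E}^2$, outsource the metric property (in particular the triangle inequality for effective resistance) to \cite{JP10}, and derive (\ref{eq:gm1}) by polarization. Your added care about the conventions $v_o:=0$ and $k^{(c)}(o,\cdot):=0$, and your explicit remark that the triangle inequality is not the Hilbert-space one since $R^{(c)}$ is a squared norm, are correct refinements of what the paper leaves implicit.
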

\begin{proof}
See \cite{JP10}. Set 
\begin{equation}
R^{\left(c\right)}\left(x,y\right)=\left\Vert v_{x}-v_{y}\right\Vert _{\mathscr{H}_{E}}^{2}.\label{eq:gm3}
\end{equation}
We proved in \cite{JP10} that $R^{\left(c\right)}\left(x,y\right)$
in (\ref{eq:gm3}) indeed defines a metric on $V$; the so called
\emph{resistance metric}. It represents the voltage-drop from $x$
to $y$ when 1 Amp is fed into $\left(G,c\right)$ at the point $x$,
and then extracted at $y$. 

The verification of (\ref{eq:gm1}) is now an easy computation, as
follows:
\begin{eqnarray*}
 &  & \frac{R^{\left(c\right)}\left(o,x\right)+R^{\left(c\right)}\left(o,y\right)-R^{\left(c\right)}\left(x,y\right)}{2}\\
 & = & \frac{\left\Vert v_{x}\right\Vert _{\mathscr{H}_{E}}^{2}+\left\Vert v_{y}\right\Vert _{\mathscr{H}_{E}}^{2}-\left\Vert v_{x}-v_{y}\right\Vert _{\mathscr{H}_{E}}^{2}}{2}\\
 & = & \left\langle v_{x},v_{y}\right\rangle _{\mathscr{H}_{E}}\\
 & = & k^{\left(c\right)}\left(x,y\right)\quad\mbox{by \ensuremath{\left(\ref{eq:g5}\right)}.}
\end{eqnarray*}
\end{proof}
\begin{prop}
In the two cases: (i) $B\left(t\right)$, Brownian motion on $0<t<\infty$;
and (ii) the Brownian bridge $B_{bri}\left(t\right)$, $0<t<1$, from
sect. \ref{sec:egs}, the corresponding resistance metric $R$ is
as follows:
\begin{enumerate}
\item[(i)]  If $V=\left\{ x_{i}\right\} _{i=1}^{\infty}\subset\left(0,\infty\right)$,
$x_{1}<x_{2}<\cdots$, then 
\begin{equation}
R_{B}^{\left(V\right)}\left(x_{i},x_{j}\right)=\left|x_{i}-x_{j}\right|.
\end{equation}

\item[(ii)]  If $W=\left\{ x_{i}\right\} _{i=1}^{\infty}\subset\left(0,1\right)$,
$0<x_{1}<x_{2}<\cdots<1$, then 
\begin{equation}
R_{bridge}^{\left(W\right)}\left(x_{i},x_{j}\right)=\left|x_{i}-x_{j}\right|\cdot\left(1-\left|x_{i}-x_{j}\right|\right).
\end{equation}
In the completion w.r.t. the resistance metric $R_{bridge}^{\left(W\right)}$,
the two endpoints $x=0$ and $x=1$ are identified; see also Fig \ref{fig:bb}.
\end{enumerate}
\end{prop}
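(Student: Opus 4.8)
The plan is to reduce the statement to the general identity relating the resistance metric to the Green's-function kernel, and then to run a short explicit computation in each of the two cases.

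First I would recall, from the subsection on discrete RKHSs obtained by restriction, that for an ordered set $V=\left\{ x_{i}\right\} $ the restriction $k^{\left(V\right)}\left(x_{i},x_{j}\right)=x_{i}\wedge x_{j}$ of the Brownian covariance $\left(\ref{eq:bm1}\right)$ is precisely the Green's function $k^{\left(c\right)}$ of the nearest-neighbor graph Laplacian $\Delta^{\left(c\right)}$ on the line graph with base point $o=0$ and conductances $c_{i,i+1}=\left(x_{i+1}-x_{i}\right)^{-1}$; indeed it was shown there that $\Delta^{\left(c\right)}k^{\left(V\right)}_{x_{i}}=\delta_{x_{i}}$. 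Hence $\mathscr{H}_{V}$ is the energy-Hilbert space $\mathscr{H}_{E}^{\left(c\right)}$ of Definition \ref{def:g}, Corollary \ref{cor:lap4} applies, and under the canonical isometry $k^{\left(V\right)}_{x_{i}}\mapsto v_{x_{i}}$ we get $\left\langle v_{x_{i}},v_{x_{j}}\right\rangle _{\mathscr{H}_{E}}=k^{\left(V\right)}\left(x_{i},x_{j}\right)=x_{i}\wedge x_{j}$. The Brownian-bridge case is parallel: the restriction $k_{bridge}^{\left(W\right)}\left(x_{i},x_{j}\right)=x_{i}\wedge x_{j}-x_{i}x_{j}$ of $\left(\ref{eq:bb1}\right)$ is again the Green's function of a nearest-neighbor Laplacian $\Delta^{\left(c'\right)}$ (now with base point $o=0$ together with the boundary relation coming from $k_{bridge}\left(1,\cdot\right)\equiv0$), so once more $\left\langle v_{x_{i}},v_{x_{j}}\right\rangle =k_{bridge}^{\left(W\right)}\left(x_{i},x_{j}\right)$.

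Next I would invoke Corollary \ref{cor:lap2}, or rather its proof via $\left(\ref{eq:gm3}\right)$: the resistance metric is $R^{\left(c\right)}\left(x,y\right)=\left\Vert v_{x}-v_{y}\right\Vert _{\mathscr{H}_{E}}^{2}$, which by bilinearity and $\left\langle v_{x},v_{y}\right\rangle =k\left(x,y\right)$ equals $k\left(x,x\right)+k\left(y,y\right)-2k\left(x,y\right)$ (this is exactly $\left(\ref{eq:gm1}\right)$ rearranged, using $R\left(o,x\right)=\left\Vert v_{x}\right\Vert ^{2}=k\left(x,x\right)$ since $v_{o}=0$). Then it is pure arithmetic. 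For (i), assuming $x_{i}\le x_{j}$, one has $R_{B}^{\left(V\right)}\left(x_{i},x_{j}\right)=x_{i}+x_{j}-2\left(x_{i}\wedge x_{j}\right)=x_{j}-x_{i}=\left|x_{i}-x_{j}\right|$. For (ii), again with $x_{i}\le x_{j}$, $R_{bridge}^{\left(W\right)}\left(x_{i},x_{j}\right)=x_{i}\left(1-x_{i}\right)+x_{j}\left(1-x_{j}\right)-2\left(x_{i}-x_{i}x_{j}\right)=\left(x_{j}-x_{i}\right)-\left(x_{j}-x_{i}\right)^{2}=\left|x_{i}-x_{j}\right|\left(1-\left|x_{i}-x_{j}\right|\right)$, as claimed. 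For the endpoint identification, note that $k_{bridge}\left(0,t\right)=k_{bridge}\left(1,t\right)=0$ for all $t$, so both $0$ and $1$ act as a base point (each has formal norm $0$); concretely, approximating $0$ by vertices $x_{i}\downarrow0$ and $1$ by vertices $x_{j}\uparrow1$ one gets $R_{bridge}\left(x_{i},x_{j}\right)=\left(x_{j}-x_{i}\right)\left(1-\left(x_{j}-x_{i}\right)\right)\longrightarrow1\cdot0=0$, so the two boundary points of $\left(0,1\right)$ coincide in the metric completion of $\left(W,R_{bridge}\right)$.

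I expect the only step that is not purely mechanical to be the first one: cleanly justifying that these restricted continuous kernels are indeed the graph-Laplacian Green's functions, so that Corollary \ref{cor:lap2} is legitimately available, and in particular pinning down the base point and the dual role played by $0$ and $1$ in the bridge case. Once that identification is in hand, parts (i) and (ii) are one-line computations, and the endpoint statement is a limit in the explicit formula for $R_{bridge}$.
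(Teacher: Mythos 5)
Your proposal is correct and uses exactly the machinery the paper sets up for this purpose: the identification of the restricted kernels with graph-Laplacian Green's functions (from the restriction subsection and Corollary \ref{cor:lap4}), the identity $R^{\left(c\right)}\left(x,y\right)=\left\Vert v_{x}-v_{y}\right\Vert _{\mathscr{H}_{E}}^{2}=k\left(x,x\right)+k\left(y,y\right)-2k\left(x,y\right)$ from Corollary \ref{cor:lap2} and \eqref{eq:gm3}, and then the two short algebraic evaluations. The paper states this proposition without proof, and your argument (including the limit computation showing $R_{bridge}\rightarrow0$ as $x_{i}\downarrow0$, $x_{j}\uparrow1$, which justifies the endpoint identification) fills that in correctly.
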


\subsection{Gaussian Processes}
\begin{defn}
A \emph{Gaussian realization} of an infinite graph-network $G=\left(V,E\right)$,
with prescribed conductance function $c:E\rightarrow\mathbb{R}_{+}$,
and \emph{dipoles} $\left(v_{x}^{c}\right)_{x\in V\backslash\left\{ o\right\} }$,
is a Gaussian process $\left(X_{x}\right)_{x\in V}$ on a probability
space $\left(\Omega,\mathscr{F},\mathbb{P}\right)$, where $\Omega$
is a sample space; $\mathscr{F}$ a sigma-algebra of events, and $\mathbb{P}$
a probability measure s.t., for $\forall F\in\mathscr{F}\left(V\right)$,
the random variables $\left(X_{x}\right)_{x\in F}$, are jointly Gaussian
with 
\begin{equation}
\mathbb{E}\left(X_{x}\right)=\int_{\Omega}X_{x}d\mathbb{P}=0
\end{equation}
and covariance 
\begin{equation}
\mathbb{E}\left(X_{x}X_{y}\right)=k^{\left(c\right)}\left(x,y\right)=\left\langle v_{x}^{\left(c\right)},v_{y}^{\left(c\right)}\right\rangle _{\mathscr{H}_{E}};\label{eq:gfc}
\end{equation}
i.e., the covariance matrix $\left(\mathbb{E}\left(X_{x}X_{y}\right)\right)_{\left(x,y\right)\in F\times F}$
is 
\begin{equation}
K_{F}\left(x,y\right):=k^{\left(c\right)}\left(x,y\right)\;\mbox{on \ensuremath{F\times F.}}
\end{equation}
\end{defn}
\begin{lem}[\cite{JP10}]
For all $G=\left(V,E\right)$, and $c:E\rightarrow\mathbb{R}_{+}$,
as specified, Gaussian realizations exist; they are called \uline{Gaussian
free fields}.\end{lem}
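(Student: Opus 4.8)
The plan is to invoke the Kolmogorov extension (consistency) theorem; the one analytic input needed has already been supplied, namely that $k^{(c)}$ is positive definite (the lemma of \cite{JP11} quoted above). Extend the dipole system by setting $v_o:=0$, so that $k^{(c)}(o,\cdot)\equiv 0$ and $k^{(c)}$ is a positive definite kernel on all of $V\times V$. Then for every $F\in\mathscr{F}(V)$ the matrix $K_F=(k^{(c)}(x,y))_{(x,y)\in F\times F}$ is real, symmetric and positive semidefinite, so there is a unique centered Gaussian measure $\mu_F$ on $\mathbb{R}^F$ with covariance $K_F$; equivalently, its characteristic function is $\widehat{\mu_F}(\xi)=\exp\!\big(-\tfrac12\langle\xi,K_F\xi\rangle_{l^2(F)}\big)$ for $\xi\in\mathbb{R}^F$.

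The next step is the consistency check: if $F\subset F'$ in $\mathscr{F}(V)$, then the push-forward of $\mu_{F'}$ under the coordinate restriction $\mathbb{R}^{F'}\to\mathbb{R}^F$ equals $\mu_F$. This is immediate from the characteristic-function formula, since for $\xi\in\mathbb{R}^F$ — zero-extended to an element of $\mathbb{R}^{F'}$ — one has $\langle\xi,K_{F'}\xi\rangle=\langle\xi,K_F\xi\rangle$, because $K_F$ is the principal submatrix of $K_{F'}$ indexed by $F$. Kolmogorov's theorem then produces a probability space $(\Omega,\mathscr{F},\mathbb{P})$ — one may take $\Omega=\mathbb{R}^V$ with its cylinder $\sigma$-algebra and $\mathbb{P}$ the projective limit of $\{\mu_F\}$ — together with the coordinate random variables $X_x(\omega):=\omega(x)$, $x\in V$. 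By construction the joint law of $(X_x)_{x\in F}$ is $\mu_F$ for each $F$, so the $X_x$ are jointly Gaussian with $\mathbb{E}(X_x)=0$ and $\mathbb{E}(X_xX_y)=K_F(x,y)=k^{(c)}(x,y)=\langle v_x^{(c)},v_y^{(c)}\rangle_{\mathscr{H}_E}$, which is exactly the covariance prescribed in (\ref{eq:gfc}). Hence $(X_x)_{x\in V}$ is a Gaussian realization of $(G,c)$.

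I expect no serious obstacle: the measure-theoretic part of Kolmogorov's theorem is standard since $V$ is countable, and the consistency condition collapses to the trivial remark about principal submatrices. For the record, the name \emph{Gaussian free field} is explained by the structural reformulation that $v_x\mapsto X_x$ extends by linearity and density (the dipoles span $\mathscr{H}_E$) to a unitary isomorphism of $\mathscr{H}_E$ onto the closed Gaussian subspace of $L^2(\Omega,\mathscr{F},\mathbb{P})$ generated by $\{X_x\}$; one could instead build $(\Omega,\mathbb{P})$ directly from this Gaussian isometry, yielding an equivalent proof.
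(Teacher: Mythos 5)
Your proof is correct; the paper itself offers no argument here, merely citing \cite{JP10}, and the Kolmogorov-extension construction you give (positive semidefiniteness of each $K_F$, consistency of the finite-dimensional Gaussian laws via the principal-submatrix/characteristic-function remark, projective limit over the countable index set $V$) is precisely the standard proof that the cited reference supplies. The extension $v_o:=0$, forcing $X_o=0$ a.s., is the right way to handle the base point, and your closing remark identifying $v_x\mapsto X_x$ as a Gaussian isometry of $\mathscr{H}_E$ onto the first chaos is consistent with the paper's use of the term ``Gaussian free field.''
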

\begin{cor}
\label{cor:lap3}Let $G=\left(V,E\right)$, $c:E\rightarrow\mathbb{R}_{+}$
be as above; and let $\left(X_{x}\right)_{x\in V}$ be an associated
Gaussian free field. Then the point Dirac-masses $\left(\delta_{x}\right)_{x\in V}$
have Gaussian realizations
\begin{equation}
\widetilde{\delta_{x}}=c\left(x\right)X_{x}-\sum_{y\sim x}c_{xy}X_{y},\quad\forall x\in V.
\end{equation}

\end{cor}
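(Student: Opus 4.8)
The plan is to obtain Corollary~\ref{cor:lap3} as the image, under the canonical Gaussian isometry attached to the free field, of the explicit identity for $\delta_x$ already proved in the network setting. First I would recall from Corollary~\ref{cor:lap1} (together with the Proposition preceding it) that, for every $x\in V$, the Dirac mass $\delta_x$ lies in $\mathscr{H}=\mathscr{H}(k^{(c)})$ and is the \emph{finite} linear combination of dipoles
\[
\delta_x \;=\; c(x)\,v_x \;-\; \sum_{y\sim x} c_{xy}\, v_y ,
\]
where finiteness of the sum is exactly the local finiteness hypothesis in Definition~\ref{def:g}\,\ref{enu:g2}, and where $\Delta v_x=\delta_x-\delta_o$ by \eqref{eq:g11}. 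By Corollary~\ref{cor:lap4} the energy space $\mathscr{H}_E$ and the RKHS $\mathscr{H}(k^{(c)})$ are isometrically isomorphic via $v_x\longmapsto k_x^{(c)}$, so it is harmless to phrase everything in terms of the dipoles $v_x$.

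Second, I would introduce the \emph{Gaussian (It\^o) isometry} $T\colon\mathscr{H}_E\longrightarrow L^2(\Omega,\mathscr{F},\mathbb{P})$ determined on the dipole system by $Tv_x:=X_x$ (so $Tv_o=X_o=0$). It is well defined and isometric onto its range: for any finite $F\subset V$ and real scalars $(\xi_x)_{x\in F}$, the defining covariance property \eqref{eq:gfc} of the free field gives
\[
\mathbb{E}\Bigl(\bigl(\textstyle\sum_{x\in F}\xi_x X_x\bigr)^2\Bigr)
=\sum_{x,y\in F}\xi_x\xi_y\,\mathbb{E}(X_xX_y)
=\sum_{x,y\in F}\xi_x\xi_y\,k^{(c)}(x,y)
=\Bigl\|\textstyle\sum_{x\in F}\xi_x v_x\Bigr\|_{\mathscr{H}_E}^2 ,
\]
and one extends to all of $\mathscr{H}_E$ by density of $\mathrm{span}\{v_x\}$ (Lemma~\ref{lem:lap2}). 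Moreover every element of $\mathrm{ran}(T)$ is an $L^2$-limit of mean-zero jointly Gaussian variables, hence is itself a mean-zero Gaussian variable, jointly Gaussian with the whole family $(X_z)_{z\in V}$.

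Third, I would simply apply $T$ to the identity of the first paragraph. Since $T$ is linear and the sum is finite, no limiting argument is needed and one gets
\[
\widetilde{\delta_x}\;:=\;T\delta_x\;=\;c(x)\,Tv_x-\sum_{y\sim x}c_{xy}\,Tv_y\;=\;c(x)\,X_x-\sum_{y\sim x}c_{xy}\,X_y ,
\]
which is the asserted formula. To see that this is a genuine \emph{Gaussian realization of $\delta_x$}, observe that $\widetilde{\delta_x}\in\mathrm{ran}(T)$ is mean-zero Gaussian and, because $T$ is isometric, $\mathbb{E}(\widetilde{\delta_x}\,\widetilde{\delta_y})=\langle\delta_x,\delta_y\rangle_{\mathscr{H}}$ and $\mathbb{E}(\widetilde{\delta_x}\,X_y)=\langle\delta_x,v_y\rangle_{\mathscr{H}_E}=(\Delta v_y)(x)$; thus $\widetilde{\delta_x}$ carries exactly the covariance data that $\delta_x$ has inside $\mathscr{H}$ against the spanning system $\{v_y\}$ (equivalently $\{k_y^{(c)}\}$). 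The steps are all routine once the framework is set up; the only point needing a little care — and the conceptual core of the argument — is the second one, namely checking that $T$ genuinely extends to an isometry of $\mathscr{H}_E$ and that its range consists of honest (jointly) Gaussian variables, which is precisely where the covariance identity \eqref{eq:gfc} and the density of the dipoles are used. Everything else is linearity applied to the finite-sum representation of $\delta_x$.
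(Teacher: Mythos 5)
Your proof is correct and is exactly the intended argument: the paper itself gives no proof for this corollary, treating it as immediate from the dipole representation $\delta_{x}=c\left(x\right)v_{x}-\sum_{y\sim x}c_{xy}v_{y}$ of Corollary \ref{cor:lap1} combined with the It\^o-type isometry $v_{x}\mapsto X_{x}$ furnished by the covariance identity (\ref{eq:gfc}). Your write-up supplies precisely those two ingredients (plus the local-finiteness of the sum from Definition \ref{def:g}), so nothing is missing.
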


\begin{cor}
\label{cor:Lap3}Let $G=\left(V,E\right)$, and $c:E\rightarrow\mathbb{R}_{+}$
be as above. Let $\left\{ X_{x}\right\} _{x\in V}$ be the corresponding
Gaussian free field, i.e., with correlation 
\begin{equation}
\mathbb{E}\left(X_{x}X_{y}\right)=k^{\left(c\right)}\left(x,y\right)=\left\langle v_{x}^{\left(c\right)},v_{y}^{\left(c\right)}\right\rangle _{\mathscr{H}_{E}}\label{eq:gf1}
\end{equation}
 where the dipoles $\{v_{x}^{\left(c\right)}\}\subset\mathscr{H}_{E}$
are computed w.r.t. a chosen (and fixed) based-point $o\in V$, i.e.,
\begin{equation}
\left\langle v_{x}^{\left(c\right)},f\right\rangle _{\mathscr{H}_{E}}=f\left(x\right)-f\left(o\right),\quad\forall f\in\mathscr{H}_{E},\; x\in V.\label{eq:gf2}
\end{equation}
Finally, let $R^{\left(c\right)}\left(x,y\right)$ be the corresponding
resistance metric on $V$. Then
\begin{equation}
\mathbb{E}\left(X_{x}X_{z}\right)+\mathbb{E}\left(X_{z}X_{y}\right)\leq\mathbb{E}\left(X_{x}X_{y}\right)+R^{\left(c\right)}\left(o,z\right)\label{eq:gf3}
\end{equation}
holds for all vertices $x,y,z\in V$; see Fig \ref{fig:gf}.\end{cor}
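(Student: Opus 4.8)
The plan is to reduce inequality (\ref{eq:gf3}) to the triangle inequality for the resistance metric $R^{(c)}$, which is already available to us. First I would recall two ingredients. By the defining covariance property (\ref{eq:gfc}) of the Gaussian free field, $\mathbb{E}\left(X_{a}X_{b}\right)=k^{(c)}(a,b)$ for all $a,b\in V$. And by Corollary \ref{cor:lap2}, the Green's function is expressed through the resistance metric as
\[
k^{(c)}(a,b)=\tfrac{1}{2}\left(R^{(c)}(o,a)+R^{(c)}(o,b)-R^{(c)}(a,b)\right).
\]
Substituting this identity for each of the three correlations in (\ref{eq:gf3}), the $R^{(c)}(o,x)$ and $R^{(c)}(o,y)$ contributions cancel between the two sides and the $R^{(c)}(o,z)$ contributions combine, so that (\ref{eq:gf3}) becomes equivalent, after multiplying by $2$, to
\[
-R^{(c)}(x,z)-R^{(c)}(z,y)\leq -R^{(c)}(x,y),
\]
i.e. to $R^{(c)}(x,y)\leq R^{(c)}(x,z)+R^{(c)}(z,y)$.

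Next I would invoke the fact, recorded in Corollary \ref{cor:lap2} and proved in \cite{JP10} from the formula $R^{(c)}(x,y)=\left\Vert v_{x}-v_{y}\right\Vert_{\mathscr{H}_{E}}^{2}$, that $R^{(c)}$ is a genuine metric on $V$; in particular it obeys the triangle inequality, which is exactly the displayed inequality. That closes the argument. I would also record the equality case: (\ref{eq:gf3}) holds with equality precisely when $z$ lies on a resistance-geodesic from $x$ to $y$, i.e. $R^{(c)}(x,z)+R^{(c)}(z,y)=R^{(c)}(x,y)$, which in Hilbert-space terms reads $\left\langle v_{z}-v_{x},v_{z}-v_{y}\right\rangle_{\mathscr{H}_{E}}=0$. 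No separate treatment of the base point is needed, since (\ref{eq:gm2}) holds for all $x\in V$ with $v_{o}=0$ and $R^{(c)}(o,o)=0$, and since the whole reduction is a finite algebraic identity there are no convergence or domain questions.

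The only non-formal step is the appeal to the metric property of $R^{(c)}$; equivalently, to the positivity $\left\langle v_{z}-v_{x},v_{z}-v_{y}\right\rangle_{\mathscr{H}_{E}}\geq 0$ of the pairing in $\mathscr{H}_{E}$ of the two unit current flows into $z$. This is not a general Hilbert-space inequality but a structural feature of the dipole vectors $\{v_{x}\}$, and it is precisely what was established in \cite{JP10}, so I would cite it rather than reprove it. An alternative that avoids that citation would be to derive $\left\langle v_{z}-v_{x},v_{z}-v_{y}\right\rangle_{\mathscr{H}_{E}}\geq 0$ directly from (\ref{eq:g4}) and (\ref{eq:g2}) by a maximum-principle (or current-conservation) argument on the network $\left(G,c\right)$, but routing through the resistance metric is shorter and self-contained within this paper.
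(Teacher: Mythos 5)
Your proof is correct and follows essentially the same route as the paper's: both reduce (\ref{eq:gf3}), via the covariance identity (\ref{eq:gfc}) and the formula of Corollary \ref{cor:lap2}, to the triangle inequality $R^{\left(c\right)}\left(x,y\right)\leq R^{\left(c\right)}\left(x,z\right)+R^{\left(c\right)}\left(z,y\right)$, which the paper invokes in the equivalent form $\left\Vert v_{x}-v_{y}\right\Vert _{\mathscr{H}}^{2}\leq\left\Vert v_{x}-v_{z}\right\Vert _{\mathscr{H}}^{2}+\left\Vert v_{z}-v_{y}\right\Vert _{\mathscr{H}}^{2}$ citing \cite{JP10}. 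You are, if anything, more explicit than the paper in flagging that this squared-norm inequality is not a generic Hilbert-space fact but a structural property of the dipoles $\left\{ v_{x}\right\} $ established in \cite{JP10}.
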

\begin{proof}
Use Corollary \ref{cor:lap2}, and (\ref{eq:gm3}). We have 
\[
\left\Vert v_{x}-v_{y}\right\Vert _{\mathscr{H}}^{2}\leq\left\Vert v_{x}-v_{z}\right\Vert _{\mathscr{H}}^{2}+\left\Vert v_{z}-v_{y}\right\Vert _{\mathscr{H}}^{2},
\]
and (\ref{eq:gf3}) now follows from (\ref{eq:gfc}).
\end{proof}
\begin{figure}[H]
\includegraphics[width=0.5\textwidth]{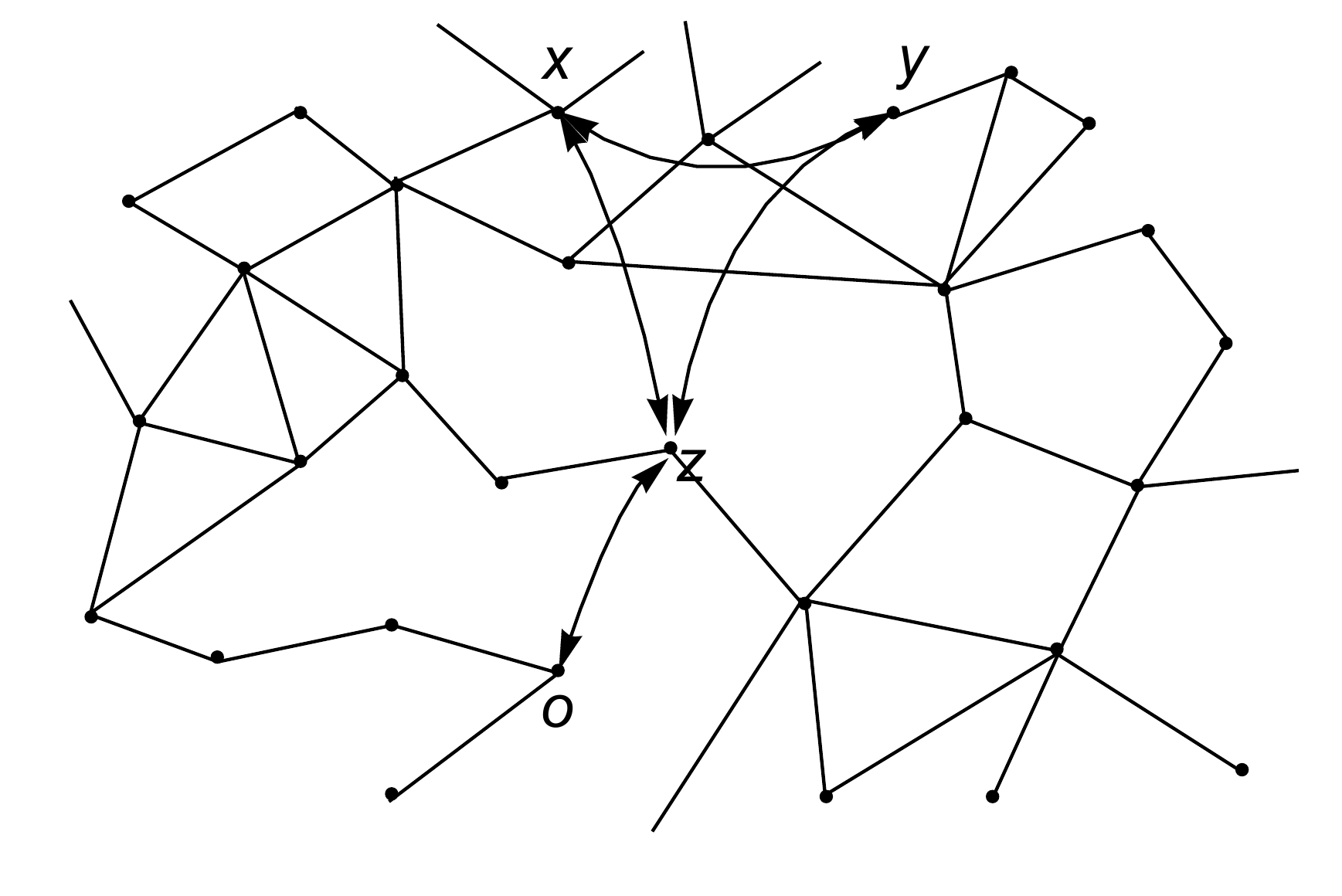}

\protect\caption{\label{fig:gf}Covariance vs resistance distance $R^{\left(c\right)}\left(o,z\right)$
for three vertices $x,y,z\in V$.}
\end{figure}

\subsection{Metric Completion}

The next theorem illustrates a connection between the universal property
of a kernel in a RKHS $\mathscr{H}$, on the one hand, and the distribution
of the Dirac point-masses $\delta_{x}$, on the other. We make \textquotedblleft distribution\textquotedblright{}
precise by the quantity $E\left(x\right):=\Vert\delta_{x}\Vert_{\mathscr{H}}^{2}$,
the energy of the point-mass at the vertex point $x$. We introduce
a metric completion $M$, and the universal property of the RKHS $\mathscr{H}$
asserts that the functions from $\mathscr{H}$ are continuous and
$1/2$-Lipschitz on $M$, and that they approximate every continuous
function on $M$ in the uniform norm. Recall, the vertex set $V$
is equipped with its resistance metric. The universal property here
refers to the corresponding metric completion $M$ of the discrete
vertex set. In the interesting cases (see e.g., Example \ref{exa:btree}),
$M$ is a continuum; -- in the case of the example below, the boundary
of $V$ is a Cantor set. One expects the value of $E\left(x\right)$
to go to infinity as $x$ approaches the boundary $M$, and this is
illustrated in the example; with an explicit formula for $E\left(x\right)$.

Of special interest is the class of networks $(V,E)$ where the resistance
metric $R$ (on the given vertex vertex-set $V$) is bounded; see
\ref{enu:m2} in Theorem \ref{thm:mc} below. This class of networks,
for which the diameter of $V$ measured in the resistance metric $R$
is bounded, includes networks having lots of edges with resistors
occurring in parallel; see e.g., \cite{JP11}.
\begin{thm}
\label{thm:mc}Let $G=\left(V,E\right)$, $c:E\rightarrow\mathbb{R}_{+}$
be as above, and let $R^{\left(c\right)}:V\times V\rightarrow\mathbb{R}_{+}$
be the resistance-metric (see (\ref{eq:gm3})). Let $M$ be the metric
completion of $\left(V,R^{\left(c\right)}\right)$. Then:
\begin{enumerate}[label=(\roman{enumi})]
\item \label{enu:m1} For every $f\in\mathscr{H}$, the function
\begin{equation}
V\ni x\longmapsto f\left(x\right)\in\mathbb{C}\label{eq:m1}
\end{equation}
 extends by closure to a uniformly continuous function $\widetilde{f}:M\mapsto\mathbb{C}$. 
\item \label{enu:m2}If $R^{\left(c\right)}$ is assumed \uline{bounded},
then the RKHS $\mathscr{H}$ is an algebra under point-wise product:
\begin{equation}
\left(f_{1}f_{2}\right)\left(x\right)=f_{1}\left(x\right)f_{2}\left(x\right),\quad f_{i}\in\mathscr{H},\: i=1,2,\: x\in V.\label{eq:m2}
\end{equation}

\item \label{enu:m3}If $M$ is \uline{compact}, then $\{\widetilde{f}\:|\: f\in\mathscr{H}\}$
is dense in $C\left(M\right)$ in the uniform norm.
\end{enumerate}
\end{thm}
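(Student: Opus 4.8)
The plan is to establish \ref{enu:m1}, \ref{enu:m2}, \ref{enu:m3} in that order: the first is an energy estimate, the second a Leibniz computation, and the third an application of the Stone--Weierstrass theorem, with the two earlier parts supplying respectively the algebra and the separation hypotheses. For \ref{enu:m1}: given $f\in\mathscr{H}$ and $x,y\in V$, Lemma \ref{lem:lap2} gives $f(x)-f(y)=\langle v_{x}-v_{y},f\rangle_{\mathscr{H}}$, so Cauchy--Schwarz together with the identity $\left\Vert v_{x}-v_{y}\right\Vert_{\mathscr{H}}^{2}=R^{(c)}(x,y)$ from (\ref{eq:gm3}) yields
\[
\left|f(x)-f(y)\right|\le\left\Vert f\right\Vert_{\mathscr{H}}\,\sqrt{R^{(c)}(x,y)}\,.
\]
Thus every $f\in\mathscr{H}$ is $\tfrac{1}{2}$-H\"older for $R^{(c)}$, hence uniformly continuous on $(V,R^{(c)})$, and so extends uniquely to a uniformly continuous $\widetilde{f}:M\to\mathbb{C}$ on the completion $M$. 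Taking $y=o$ and using $f(o)=0$ also records $\bigl|\widetilde{f}(p)\bigr|\le\left\Vert f\right\Vert_{\mathscr{H}}\sqrt{R^{(c)}(o,p)}$ for $p\in M$, to be reused below.

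For \ref{enu:m2}, assume $D:=\sup_{x,y\in V}R^{(c)}(x,y)<\infty$. The last estimate gives $\left\Vert f\right\Vert_{\infty}\le\sqrt{D}\,\left\Vert f\right\Vert_{\mathscr{H}}$, so $\mathscr{H}$ embeds boundedly in $\ell^{\infty}(V)$. Writing $f_{1}(x)f_{2}(x)-f_{1}(y)f_{2}(y)=f_{1}(x)\bigl(f_{2}(x)-f_{2}(y)\bigr)+f_{2}(y)\bigl(f_{1}(x)-f_{1}(y)\bigr)$, squaring, applying $|a+b|^{2}\le2|a|^{2}+2|b|^{2}$, multiplying by $c_{xy}$ and summing over $E$ (see (\ref{eq:g3})) gives
\[
\left\Vert f_{1}f_{2}\right\Vert_{\mathscr{H}}^{2}\le2\left\Vert f_{1}\right\Vert_{\infty}^{2}\left\Vert f_{2}\right\Vert_{\mathscr{H}}^{2}+2\left\Vert f_{2}\right\Vert_{\infty}^{2}\left\Vert f_{1}\right\Vert_{\mathscr{H}}^{2}\le4D\left\Vert f_{1}\right\Vert_{\mathscr{H}}^{2}\left\Vert f_{2}\right\Vert_{\mathscr{H}}^{2}\,.
\]
Since also $(f_{1}f_{2})(o)=0$, and $\mathscr{H}$ is by (\ref{eq:g3}) exactly the space of finite-energy functions vanishing at $o$, the pointwise product $f_{1}f_{2}$ lies in $\mathscr{H}$; hence $\mathscr{H}$ is an algebra (and multiplication is bounded).

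For \ref{enu:m3}, suppose $M$ is compact; then $R^{(c)}$ is a bounded metric on $M$, so \ref{enu:m2} applies and $\mathscr{A}:=\{\widetilde{f}:f\in\mathscr{H}\}$ is a subalgebra of $C(M)$ (products pass to products since $V$ is dense in $M$, giving $\widetilde{f_{1}f_{2}}=\widetilde{f_{1}}\widetilde{f_{2}}$; and $\mathscr{A}$ is conjugation-invariant since $\overline{f}\in\mathscr{H}$ whenever $f\in\mathscr{H}$). The one remaining Stone--Weierstrass hypothesis is that $\mathscr{A}$ separates points of $M$. For $x\ne y$ in $V$, the element $v_{x}-v_{y}\in\mathscr{H}$ already separates them, since $(v_{x}-v_{y})(x)-(v_{x}-v_{y})(y)=\left\Vert v_{x}-v_{y}\right\Vert_{\mathscr{H}}^{2}=R^{(c)}(x,y)>0$. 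For general points: if $x_{n}\to p$ in $M$ with $x_{n}\in V$, then $\left\Vert v_{x_{n}}-v_{x_{m}}\right\Vert_{\mathscr{H}}^{2}=R^{(c)}(x_{n},x_{m})\to0$, so $(v_{x_{n}})$ is Cauchy in $\mathscr{H}$ and converges to some $v_{p}\in\mathscr{H}$; from $g(x)=\langle v_{x},g\rangle_{\mathscr{H}}$ (Lemma \ref{lem:lap2}, $g(o)=0$) and continuity of $\widetilde{g}$ one gets $\widetilde{g}(p)=\langle v_{p},g\rangle_{\mathscr{H}}$ for every $g\in\mathscr{H}$. Hence for $p\ne q$ in $M$, with $x_{n}\to p$ and $y_{n}\to q$, the element $g:=v_{p}-v_{q}\in\mathscr{H}$ has $\widetilde{g}(p)-\widetilde{g}(q)=\left\Vert v_{p}-v_{q}\right\Vert_{\mathscr{H}}^{2}=\lim_{n}R^{(c)}(x_{n},y_{n})=R^{(c)}(p,q)>0$, so $\mathscr{A}$ separates points. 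By Stone--Weierstrass the uniform closure of $\mathscr{A}$ is then determined; since $\widetilde{f}(o)=0$ for all $f\in\mathscr{H}$, it equals $\{g\in C(M):g(o)=0\}$, i.e.\ $\mathscr{H}+\mathbb{C}\mathbf{1}$ is uniformly dense in $C(M)$, which is the asserted universality up to the harmless normalization at the base point $o$.

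The main obstacle sits inside \ref{enu:m3}: one has to extend the dipole system $\{v_{x}\}_{x\in V}$ to ``boundary dipoles'' $\{v_{p}\}_{p\in M}$ and control them, and this rests on the isometry $\left\Vert v_{x}-v_{y}\right\Vert_{\mathscr{H}}^{2}=R^{(c)}(x,y)$ (Corollary \ref{cor:lap2}) together with completeness of $\mathscr{H}$, so that $R^{(c)}$-Cauchy sequences become $\mathscr{H}$-Cauchy sequences. The secondary technical point is the Leibniz energy bound in \ref{enu:m2}, and the only genuine caveat is the normalization $f(o)=0$ built into (\ref{eq:g3}), which is why the cleanest conclusion is density of $\mathscr{H}$ in the codimension-one subspace $\{g\in C(M):g(o)=0\}$.
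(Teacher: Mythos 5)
Your proof is correct and follows the same route as the paper: the $\tfrac12$-H\"older estimate $\left|f(x)-f(y)\right|\le\Vert f\Vert_{\mathscr{H}}\sqrt{R^{(c)}(x,y)}$ for \ref{enu:m1}, a Leibniz-type energy bound for \ref{enu:m2} (you use $|a+b|^{2}\le2|a|^{2}+2|b|^{2}$ where the paper uses a Schwarz step, with the same outcome), and Stone--Weierstrass for \ref{enu:m3}. You do, however, tighten two points that the paper's proof leaves implicit. First, the paper justifies separation of points by citing Corollary \ref{cor:lap2}, which only separates points of $V$; your construction of boundary dipoles $v_{p}=\lim_{n}v_{x_{n}}$ in $\mathscr{H}$ (using that $R^{(c)}$-Cauchy sequences of vertices give $\mathscr{H}$-Cauchy dipoles) is exactly what is needed to separate points of $M\setminus V$, and is correct. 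Second, you rightly observe that every $f\in\mathscr{H}$ satisfies $f(o)=0$, so the algebra $\{\widetilde{f}\}$ lies in the proper closed subspace $\{g\in C(M):g(o)=0\}$ and the literal claim of density in $C(M)$ must be read as density in that codimension-one subspace (equivalently, density of $\mathscr{H}+\mathbb{C}\mathbf{1}$); the paper's proof does not address this normalization. Both refinements strengthen rather than alter the argument.
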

\begin{proof}
The assertions in \ref{enu:m1} follow from the following two estimates: 

Let $f\in\mathscr{H}$, then 
\begin{equation}
\left|f\left(x\right)-f\left(y\right)\right|^{2}\leq\left\Vert f\right\Vert _{\mathscr{H}}^{2}R^{\left(c\right)}\left(x,y\right),\quad\forall x,y\in V;\label{eq:m3}
\end{equation}
and 
\begin{equation}
\left|f\left(x\right)\right|\leq\left|f\left(o\right)\right|+R^{\left(c\right)}\left(o,x\right)^{\frac{1}{2}}.\label{eq:m4}
\end{equation}
The estimates in (\ref{eq:m3})-(\ref{eq:m4}), in turn, follow from
Corollaries \ref{cor:lap4} and \ref{cor:lap2}.

To prove \ref{enu:m2}, we compute the energy-norm of the product
$f_{1}\cdot f_{2}$ where $f_{i}\in\mathscr{H}$, $i=1,2$; and we
use Corollary \ref{cor:lap4}:
\begin{eqnarray*}
 &  & \sum_{x}\sum_{y}c_{xy}\left|f_{1}\left(x\right)f_{2}\left(x\right)-f_{1}\left(y\right)f_{2}\left(y\right)\right|^{2}\\
 & = & \sum_{x}\sum_{y}c_{xy}\left|\left(f_{1}\left(x\right)-f_{1}\left(y\right)\right)f_{2}\left(x\right)+f_{1}\left(y\right)\left(f_{2}\left(x\right)-f_{2}\left(y\right)\right)\right|^{2}\\
 & \leq & \sum_{x}\sum_{y}c_{xy}\left(\left|f_{1}\left(x\right)-f_{1}\left(y\right)\right|^{2}+\left|f_{2}\left(x\right)-f_{2}\left(y\right)\right|^{2}\right)\cdot\left(\left|f_{2}\left(x\right)\right|^{2}+\left|f_{1}\left(y\right)\right|^{2}\right)\\
 &  & \left(\text{by Schwarz inside}\right)\\
 & \leq & \left(\left\Vert f_{1}\right\Vert _{\infty}^{2}+\left\Vert f_{2}\right\Vert _{\infty}^{2}\right)\cdot\left(\left\Vert f_{1}\right\Vert _{\mathscr{H}}^{2}+\left\Vert f_{2}\right\Vert _{\mathscr{H}}^{2}\right);
\end{eqnarray*}
and we note that the RHS is finite subject to the assumption in \ref{enu:m2}. 

Proof of \ref{enu:m3}: We are assuming here that $M$ is \emph{compact},
and we shall apply the Stone-Weierstrass theorem to the subalgebra
\begin{equation}
\left\{ \widetilde{f}\:\big|\: f\in\mathscr{H}\right\} \subset C\left(M\right).\label{eq:m5}
\end{equation}
Indeed, the conditions for Stone-Weierstrass are satisfied: The functions
on LHS in (\ref{eq:m5}) form an algebra, by \ref{enu:m2}, closed
under complex conjugation; and it separates points in $M$ by Corollary
\ref{cor:lap2}. \end{proof}
\begin{example}[The binary tree]
\label{exa:btree} Let $A=\left\{ 0,1\right\} $, and $M:=\prod_{\mathbb{N}}A$
the infinite Cartesian product, as a Cantor space. Set $V:=$ all
finite words:
\begin{equation}
V=\bigcup_{n\in\mathbb{N}}\left\{ \left(\alpha_{1},\alpha_{2},\cdots,\alpha_{n}\right)\:\big|\:\alpha_{i}\in\left\{ 0,1\right\} \right\} ;\label{eq:bt1}
\end{equation}
and set $l\left(\left(\alpha_{1},\alpha_{2},\cdots,\alpha_{n}\right)\right)=:n$. 

For $\omega=\left(\omega_{k}\right)_{1}^{\infty}\in M$, set 
\begin{equation}
\omega\big|_{n}:=\left(\omega_{1},\omega_{2},\cdots,\omega_{n}\right)\in V.\label{eq:bt2}
\end{equation}
For two points $\omega,\omega'\in M$, we shall need the number 
\begin{equation}
l\left(\omega\cap\omega'\right)=\sup\left\{ n\::\:\omega\big|_{n}=\omega'\big|_{n}\right\} .\label{eq:bt3}
\end{equation}

Let $r:\mathbb{N}\rightarrow\mathbb{R}_{+}$ be given such that 
\begin{equation}
r\left(\emptyset\right)=0,\quad\sum_{n\in\mathbb{N}}r\left(n\right)<\infty.\label{eq:bt4}
\end{equation}
For conductance function $c:E\rightarrow\mathbb{R}_{+}$, set 
\begin{equation}
c_{\alpha,\left(\alpha t\right)}=\frac{1}{r\left(l\left(\alpha\right)\right)},\quad\forall\alpha\in V,\: t\in\left\{ 0,1\right\} .\label{eq:bt5}
\end{equation}
One checks that, when (\ref{eq:bt4}) holds, then 
\[
\lim_{n,m\rightarrow\infty}R^{\left(c\right)}\left(\omega\big|_{n},\omega\big|_{m}\right)=0.
\]

Consider the graph $G_{2}=\left(V,E\right)$ where the edges are ``lines''
between $\alpha$ and $\left(\alpha t\right)$, where $t\in\left\{ 0,1\right\} $.
See Fig \ref{fig:bt1}.\end{example}
\begin{fact*}
With the settings above, the metric completion $\widetilde{R^{\left(c\right)}}$
w.r.t. the resistance metric on $V$ is as follows: For $\omega,\omega'\in M$
(see Fig \ref{fig:bt2}), 
\begin{equation}
\widetilde{R^{\left(c\right)}}\left(\omega,\omega'\right)=2\sum_{n=l\left(\omega\cap\omega'\right)}^{\infty}r\left(n\right).\label{eq:bt6}
\end{equation}
Let $\mathscr{H}$ be the corresponding energy-Hilbert space $\simeq$
the RKHS of $k_{c}$. For $\alpha\in V$, let $\delta_{\alpha}$ be
the Dirac-mass at the vertex point $\alpha$. Then
\begin{equation}
\left\Vert \delta_{\alpha}\right\Vert _{\mathscr{H}}^{2}=\frac{2}{r\left(l\left(\alpha\right)\right)}+\frac{1}{r\left(l\left(\alpha\right)-1\right)}\:.\label{eq:bt7}
\end{equation}
\end{fact*}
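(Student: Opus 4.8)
The plan is to handle the two displays of the Fact separately: \eqref{eq:bt7} follows almost immediately from the general theory of Section~\ref{sec:net}, while \eqref{eq:bt6} needs the tree structure of $G_{2}$.

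For \eqref{eq:bt7} I would invoke the Proposition preceding Corollary~\ref{cor:lap1}, which gives $\left\Vert \delta_{x}\right\Vert _{\mathscr{H}}^{2}=c(x)=\sum_{y\sim x}c_{(xy)}$ for every vertex $x$, and then enumerate the neighbours of a fixed $\alpha\in V$ in the binary tree. Writing $n:=l(\alpha)$, the neighbours of $\alpha$ are its parent $\alpha^{-}$ (the word obtained by deleting the last letter of $\alpha$, a vertex at level $n-1$) and its two children $(\alpha 0),(\alpha 1)$ (at level $n+1$). By \eqref{eq:bt5} the edge $(\alpha^{-},\alpha)$ carries conductance $1/r(l(\alpha^{-}))=1/r(n-1)$ and each edge $(\alpha,(\alpha t))$, $t\in\{0,1\}$, carries conductance $1/r(l(\alpha))=1/r(n)$, so summing gives $c(\alpha)=\frac{1}{r(n-1)}+\frac{2}{r(n)}$, which is \eqref{eq:bt7}. (Alternatively one may pair the dipole identity $\delta_{\alpha}=c(\alpha)v_{\alpha}-\sum_{\beta\sim\alpha}c_{\alpha\beta}v_{\beta}$ from \eqref{eq:g121} with $\delta_{\alpha}$ and use $\langle\delta_{\alpha},f\rangle_{\mathscr{H}}=(\Delta f)(\alpha)$ from Corollary~\ref{cor:lap1}.)

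For \eqref{eq:bt6} the decisive input is that $G_{2}$ is a tree, so any two vertices are joined by a unique self-avoiding path and the effective resistance between them equals the sum of the edge-resistances along that path. I would first record this: for a self-contained argument, use that $R^{(c)}(x,y)=\left\Vert v_{x}-v_{y}\right\Vert _{\mathscr{H}_{E}}^{2}$ is the effective resistance from $x$ to $y$ — the unit flow supported on the connecting path realizes the Thomson/Dirichlet infimum, giving the upper bound, and the lower bound follows from the Nash--Williams cut inequality since each path-edge is by itself a cut separating $x$ from $y$ — or else cite the resistance-metric computations on trees in \cite{JP10,JP11}. Given this, for a fixed $\omega\in M$ and $n<m$ the path joining $\omega|_{n}$ to $\omega|_{m}$ is the nested chain $\omega|_{n}\to\omega|_{n+1}\to\cdots\to\omega|_{m}$, so $R^{(c)}(\omega|_{n},\omega|_{m})=\sum_{k=n}^{m-1}r(k)\to 0$ by \eqref{eq:bt4}; hence $(\omega|_{n})_{n}$ is Cauchy in $(V,R^{(c)})$ and defines a point of the completion. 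Distinct $\omega$ give distinct boundary points because, for $\omega\neq\omega'$ with $N:=l(\omega\cap\omega')$, the unique path from $\omega|_{n}$ to $\omega'|_{m}$ (with $n,m>N$) runs up to the common ancestor $\omega|_{N}=\omega'|_{N}$ and down again, whence
\begin{equation*}
R^{(c)}(\omega|_{n},\omega'|_{m})=\sum_{k=N}^{n-1}r(k)+\sum_{k=N}^{m-1}r(k)\xrightarrow[n,m\to\infty]{}2\sum_{k=N}^{\infty}r(k).
\end{equation*}
Passing to the continuous extension $\widetilde{R^{(c)}}$ of $R^{(c)}$ then gives $\widetilde{R^{(c)}}(\omega,\omega')=2\sum_{n=l(\omega\cap\omega')}^{\infty}r(n)$, i.e. \eqref{eq:bt6}; that the Cantor space $\prod_{\mathbb{N}}A$ exhausts the new points of the completion is the routine remark that a non-eventually-constant Cauchy sequence in $V$ must agree on ever-longer prefixes and so converge coordinatewise in $\prod_{\mathbb{N}}A$.

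The step I expect to be the main obstacle is the one flagged above: proving rigorously that on this \emph{infinite}, transient tree the effective resistance between two finite vertices is exactly the path-sum of edge resistances, with no loss from current escaping to infinity. Once that is in hand, the rest — the neighbour count behind \eqref{eq:bt7}, the Cauchy estimates, and the passage to the limit in \eqref{eq:bt6} — is bookkeeping, using only \eqref{eq:bt4}, \eqref{eq:bt5} and the Green's-function/resistance-metric identities already established (the Proposition before Corollary~\ref{cor:lap1} and Corollaries~\ref{cor:lap1}, \ref{cor:lap2}).
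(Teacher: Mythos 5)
Your proof is correct and, for (\ref{eq:bt7}), essentially identical to the paper's: the paper likewise just enumerates the three neighbours $\alpha^{*}$, $\left(\alpha0\right)$, $\left(\alpha1\right)$ and sums the conductances via $\left\Vert \delta_{x}\right\Vert _{\mathscr{H}}^{2}=c\left(x\right)$ from the Proposition before Corollary \ref{cor:lap1}. For (\ref{eq:bt6}) the paper offers no argument (it merely restates the formula), so your path-sum computation is a genuine supplement; the one obstacle you flag --- that the effective resistance between two vertices of the infinite tree is exactly the path-sum, with no loss of current to infinity --- dissolves if you argue on the potential side rather than with flows: since $R^{\left(c\right)}\left(x,y\right)=\left\Vert v_{x}-v_{y}\right\Vert _{\mathscr{H}_{E}}^{2}=\sup\{\left|f\left(x\right)-f\left(y\right)\right|^{2}/\left\Vert f\right\Vert _{\mathscr{H}_{E}}^{2}\}$, the upper bound is the Cauchy--Schwarz estimate along the path already used in the proof of Lemma \ref{lem:lap2}, and the matching lower bound is witnessed by the explicit finite-energy test function that increases by the edge-resistance across each edge of the unique $x$-to-$y$ path and is constant on every branch hanging off that path.
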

\begin{proof}
To see this, note that $\alpha$ has the three neighbors sketched
in Fig \ref{fig:bt1}, i.e., $\alpha^{*}$, $\left(\alpha0\right)$,
and $\left(\alpha1\right)$, where $\alpha^{*}$ is the one-truncated
word, 
\begin{equation}
\widetilde{R^{\left(c\right)}}\left(\omega,\omega'\right)=2\sum_{n=l\left(\omega\cap\omega'\right)}^{\infty}r\left(n\right).\label{eq:bt8}
\end{equation}

One checks that when (\ref{eq:bt4}) is assumed, then the conditions
in point \ref{enu:m3} of the theorem are satisfied. 
\end{proof}
\begin{figure}
\includegraphics[width=0.3\columnwidth]{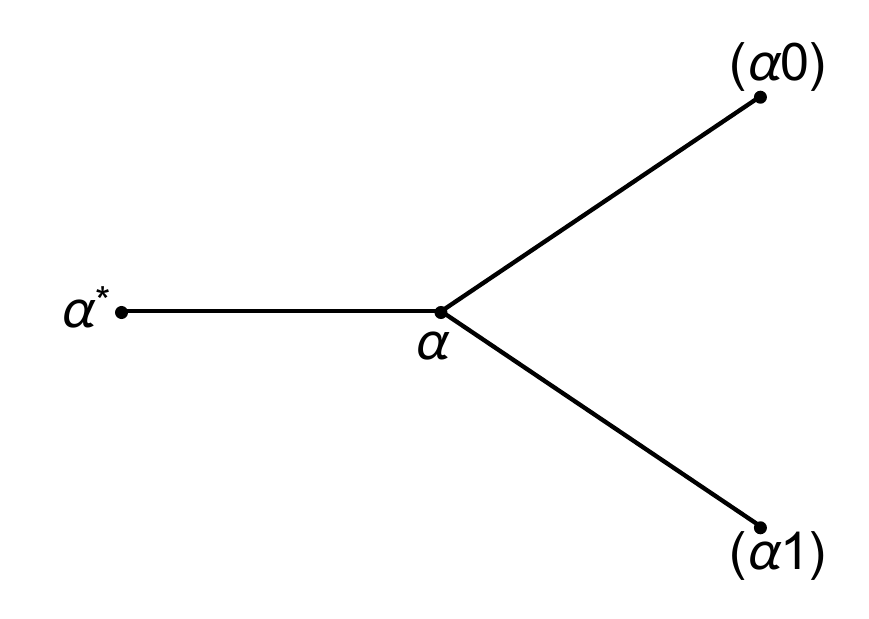}

\protect\caption{\label{fig:bt1}Edges in $G_{2}$.}
\end{figure}

\begin{figure}
\begin{tabular}{c}
\includegraphics[width=0.7\columnwidth]{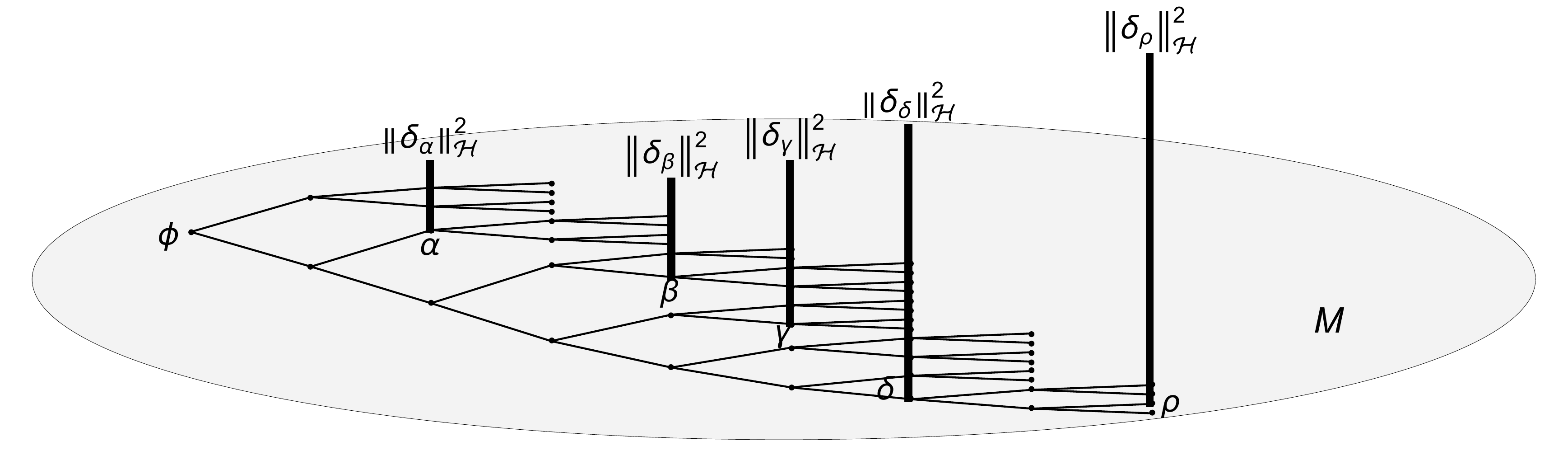}\tabularnewline
(a)\tabularnewline
\includegraphics[width=0.7\columnwidth]{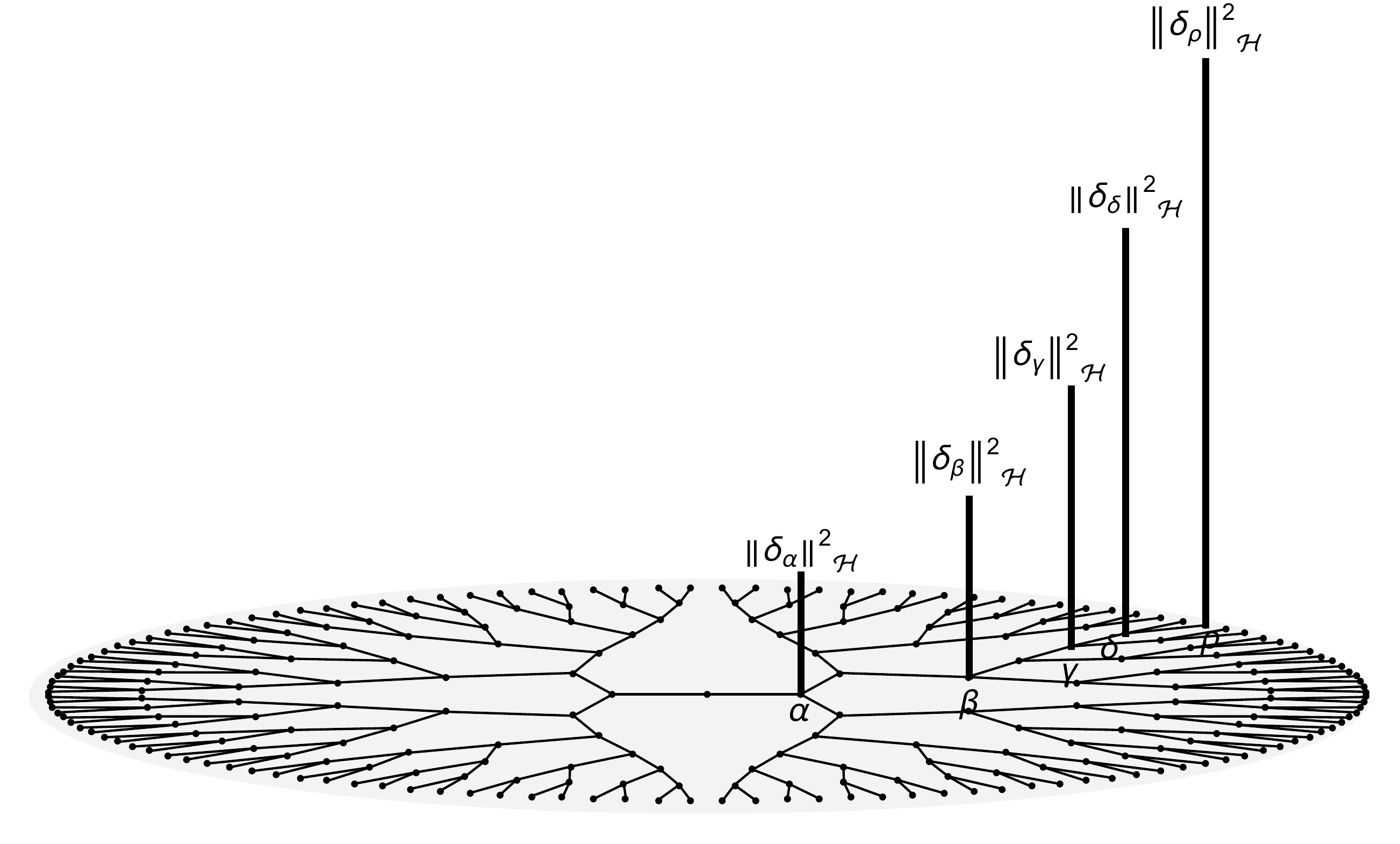}\tabularnewline
(b)\tabularnewline
\end{tabular}

\protect\caption{\label{fig:bt3}Histogram for $\left\Vert \delta_{\alpha}\right\Vert _{\mathscr{H}}^{2}$
as vertices $\alpha\in V$ approach the boundary. See (\ref{eq:bt7}),
and note $\left\Vert \delta_{\alpha}\right\Vert _{\mathscr{H}}^{2}\rightarrow\infty$
as $\alpha\rightarrow M$. }
\end{figure}

\begin{figure}
\includegraphics[width=0.7\columnwidth]{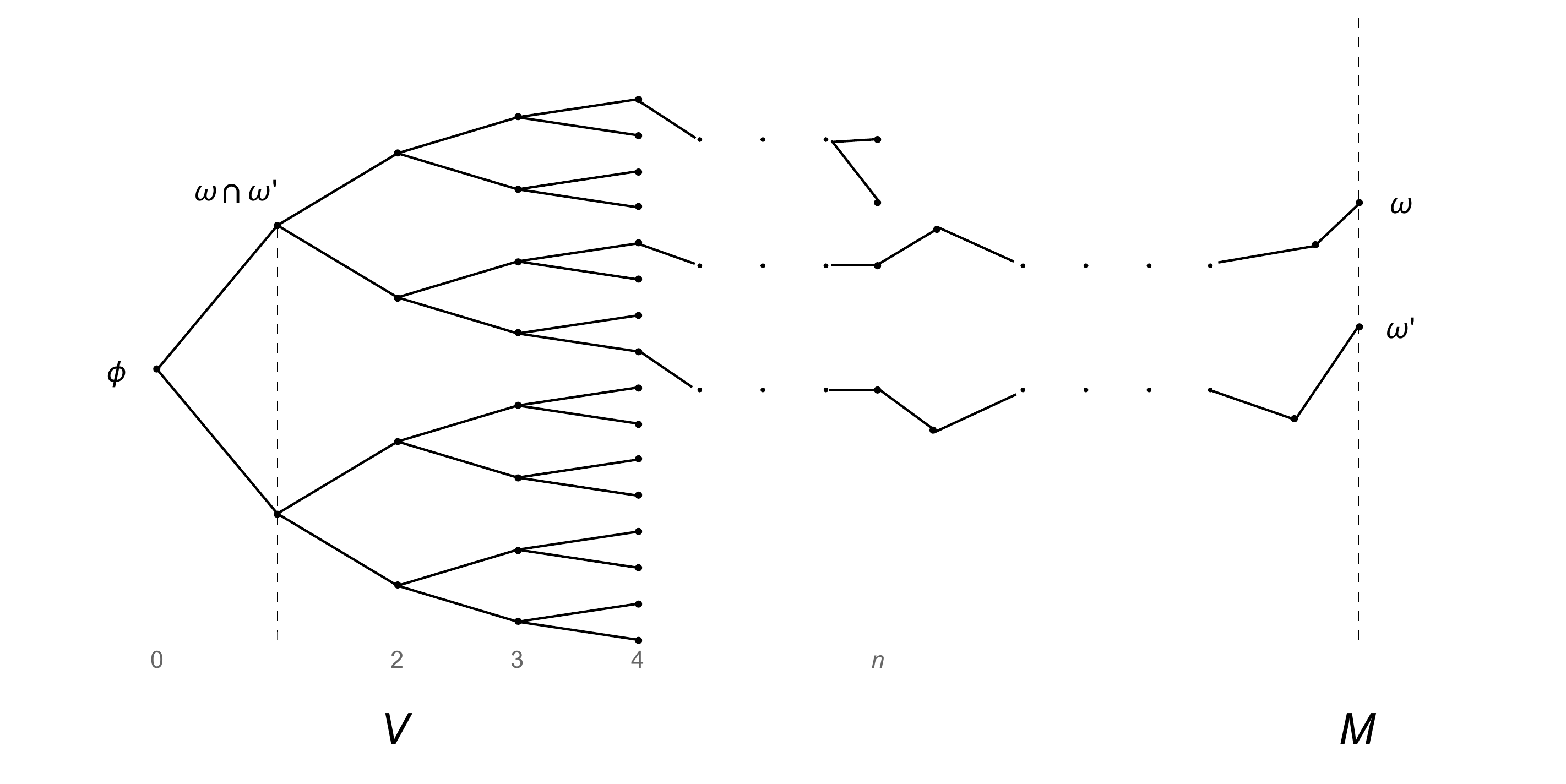}

\protect\caption{\label{fig:bt2}The binary tree and its boundary, the Cantor-set. }
\end{figure}

\begin{cor}
\label{cor:lap}Now return to the discrete restriction of Brownian
motion in sect. \ref{sub:bm}. Set $V=\left\{ x_{1},x_{2},x_{3},\cdots\right\} $
where the points $\left\{ x_{i}\right\} _{i=1}^{\infty}$ are prescribed
such that $x_{1}<x_{2}<\cdots<x_{i}<x_{i+1}<\cdots$. We turn $V$
into a weighted graph $G$ as follows: The edges $E$ in $G$ are
nearest neighbors; and we define a conductance function $c:E\rightarrow\mathbb{R}_{+}$
by setting 
\begin{equation}
c_{x_{i}x_{i+1}}:=\frac{1}{x_{i+1}-x_{i}},\label{eq:g13}
\end{equation}
and Laplace operator, 
\begin{equation}
\left(\Delta f\right)\left(x_{i}\right)=\frac{1}{x_{i+1}-x_{i}}\left(f\left(x_{i}\right)-f\left(x_{i+1}\right)\right)+\frac{1}{x_{i}-x_{i-1}}\left(f\left(x_{i}\right)-f\left(x_{i-1}\right)\right).\label{eq:g14}
\end{equation}

Then the RKHS associated with the Green's function of $\Delta$ in
(\ref{eq:g14}) agrees with that from the kernel construction in sect.
\ref{sub:bm}, i.e., the discrete Cameron-Martin Hilbert space.\end{cor}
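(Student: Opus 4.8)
The plan is to reduce the claim to the dictionary already established between energy-Hilbert spaces and RKHSs of Green's functions, and then to a one-line telescoping identity. First I would observe that the operator $\Delta$ in (\ref{eq:g14}) is exactly the graph Laplacian $\Delta^{(c)}$ of Definition \ref{def:g} for the \emph{path network} $G=(V,E)$ in which $E$ consists of the nearest-neighbour pairs $(x_i,x_{i+1})$, the conductance is $c_{x_ix_{i+1}}=1/(x_{i+1}-x_i)$ as in (\ref{eq:g13}), and the base point is $o:=x_0:=0$ --- this choice is forced, since (\ref{eq:g14}) at $i=1$ already refers to $x_0$. Adjoining the single vertex $0$ is harmless: because $k_V(0,\cdot)=0\wedge\,\cdot\equiv 0$, the extra vertex is invisible to the kernel, and the energy form $\tfrac12\sum c_{xy}|f(x)-f(y)|^2$ restricted to functions with $f(0)=0$ is literally the form defining $\mathscr H_V$ in Section \ref{sub:bm}. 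By Corollary \ref{cor:lap4}, the energy-Hilbert space $\mathscr H_E^{(c)}$ of this network is isometrically isomorphic, via $v_x\mapsto k_x^{(c)}$, to the RKHS of the Green's function kernel $k^{(c)}(x,y)=\langle v_x,v_y\rangle_{\mathscr H_E}$. So it suffices to prove the kernel identity $k^{(c)}(x_i,x_j)=x_i\wedge x_j=k_V(x_i,x_j)$ for all $i,j\ge 1$.

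Next I would compute the dipoles directly. I claim $v_{x_j}=k_V(\cdot,x_j)$. First, $k_V(\cdot,x_j)\in\mathscr H_E$, since its energy is the finite telescoping sum $\sum_{i=0}^{j-1}(x_{i+1}-x_i)=x_j$. Second, by Lemma \ref{lem:lap2} and uniqueness of the Riesz representative it is enough to check $\langle k_V(\cdot,x_j),f\rangle_{\mathscr H_E}=f(x_j)-f(o)$ for all $f\in\mathscr H_E$: using (\ref{eq:g41}) and summing over the edges of the path,
\[
\langle k_V(\cdot,x_j),f\rangle_{\mathscr H_E}=\sum_{i\ge 0}c_{x_ix_{i+1}}\bigl(k_V(x_{i+1},x_j)-k_V(x_i,x_j)\bigr)\bigl(f(x_{i+1})-f(x_i)\bigr),
\]
and $k_V(x_{i+1},x_j)-k_V(x_i,x_j)$ equals $x_{i+1}-x_i$ when $i+1\le j$ and $0$ otherwise, while $c_{x_ix_{i+1}}(x_{i+1}-x_i)=1$, so only the terms $i=0,\dots,j-1$ survive and the sum telescopes to $f(x_j)-f(x_0)=f(x_j)-f(o)$. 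Hence $v_{x_j}=k_V(\cdot,x_j)$, and since a dipole vanishes at the base point, $k^{(c)}(x_i,x_j)=\langle v_{x_i},v_{x_j}\rangle_{\mathscr H_E}=v_{x_j}(x_i)-v_{x_j}(o)=k_V(x_i,x_j)$, as desired. Alternatively one may invoke (\ref{eq:gm1}): on a series path the effective resistance is additive, $R^{(c)}(x_i,x_j)=|x_i-x_j|$ and $R^{(c)}(o,x_i)=x_i$, giving $k^{(c)}(x_i,x_j)=\tfrac12(x_i+x_j-|x_i-x_j|)=x_i\wedge x_j$; this is also consistent with the identities $\Delta k_{x_i}=\delta_{x_i}$ and (\ref{eq:g11}) $\Delta v_x=\delta_x-\delta_o$ recorded earlier.

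Finally, since $k^{(c)}=k_V$ as positive definite kernels on $V\times V$, their RKHSs coincide verbatim by Definition \ref{def:d1}; combined with Corollary \ref{cor:lap4} this common space is the energy-Hilbert space of $(V,E,c)$, namely $\{f:V\to\mathbb C\mid f(0)=0,\ \sum_{i\ge 1}|f(x_{i+1})-f(x_i)|^2/(x_{i+1}-x_i)<\infty\}$, which is precisely the discrete Cameron--Martin space of Section \ref{sub:bm}; and the energy values $\|\delta_{x_i}\|_{\mathscr H}^2=c(x_i)=\tfrac{1}{x_i-x_{i-1}}+\tfrac{1}{x_{i+1}-x_i}=\tfrac{x_{i+1}-x_{i-1}}{(x_i-x_{i-1})(x_{i+1}-x_i)}$ furnished by the Proposition preceding Corollary \ref{cor:lap1} match those computed directly in Section \ref{sub:bm}. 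The only genuinely delicate point is the base-point bookkeeping in the first paragraph --- the origin must be adjoined as the electrical base point $o$ even though $0\notin V$, and one has to record that this is harmless because $k_V$ does not see it; everything else is the telescoping identity above.
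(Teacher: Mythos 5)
Your proposal is correct and follows the route the paper intends: its own proof is just ``immediate from the previous Proposition and its corollaries,'' and you invoke exactly those tools (Corollary \ref{cor:lap4} and the dipole/resistance-metric machinery), supplying the one substantive computation the paper leaves implicit, namely that $v_{x_j}=k_V(\cdot,x_j)$ and hence $k^{(c)}(x_i,x_j)=x_i\wedge x_j$ once the base point $o=x_0=0$ is adjoined. The only nit is that in your final description of the energy space the sum should start at $i=0$ so as to include the edge $(0,x_1)$ (otherwise the norm, though not the underlying set, misses the term $|f(x_1)|^2/x_1$); everything else, including the consistency check against the values of $\|\delta_{x_i}\|_{\mathscr H}^2$ from Section \ref{sub:bm}, is right.
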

\begin{proof}
Immediate from the previous Proposition and its corollaries. \end{proof}
\begin{acknowledgement*}
The co-authors thank the following colleagues for helpful and enlightening
discussions: Professors Daniel Alpay, Sergii Bezuglyi, Ilwoo Cho,
Ka Sing Lau, Paul Muhly, Myung-Sin Song, Wayne Polyzou, Gestur Olafsson,
Keri Kornelson, and members in the Math Physics seminar at the University
of Iowa.

\bibliographystyle{amsalpha}
\bibliography{ref}
\end{acknowledgement*}

\end{document}